\tikzset{%
element/.style={draw, shape=circle, fill=white, inner sep=1.4pt}
}
\DeclareSymbolFont{bbold}{U}{bbold}{m}{n}
\DeclareSymbolFontAlphabet{\mathbbold}{bbold}
\theoremstyle{plain}
\newtheorem{thm}{Theorem}[section]
\newtheorem{lem}[thm]{Lemma}
\newtheorem{cor}[thm]{Corollary}
\newtheorem{pro}[thm]{Proposition}
\newtheorem{example}[thm]{Example}
\theoremstyle{definition}
\newtheorem{remark}[thm]{Remark}
\newcommand{\KG}{\operatorname{KG}}
\newcommand{\up}[1]{\textup{#1}}
\newcommand{\bp}{\mathbf{p}}
\newcommand{\bq}{\mathbf{q}}
\newcommand{\bt}{\mathbf{t}}
\newcommand{\bu}{\mathbf{u}}
\newcommand{\bv}{\mathbf{v}}
\newcommand{\bw}{\mathbf{w}}
\begin{document}

\title[The finite basis problem]
{The finite basis problem for additively idempotent semirings that relate to $S_7$}

\author{Zidong Gao}
\address{School of Mathematics, Northwest University, Xi'an, 710127, Shaanxi, P.R. China}
\email{zidonggao@yeah.net}

\author{Marcel Jackson}
\address{Department of Mathematical and Physical Sciences\\ La Trobe University\\ Victoria  3086\\
Australia} \email{m.g.jackson@latrobe.edu.au}

\author{Miaomiao Ren}
\address{School of Mathematics, Northwest University, Xi'an, 710127, Shaanxi, P.R. China}
\email{miaomiaoren@yeah.net}

\author{Xianzhong Zhao}
\address{School of Mathematics, Northwest University, Xi'an, 710127, Shaanxi, P.R. China}
\email{zhaoxz@nwu.edu.cn}

\subjclass[2010]{16Y60, 03C05, 08B15}
\keywords{semiring, variety, finite basis problem.}
\thanks{Miaomiao Ren, corresponding author, is supported by National Natural Science Foundation of China (12371024).
}

\begin{abstract}
The $3$-element additively idempotent semiring $S_7$
is a nonfinitely based algebra of the smallest possible order.
In this paper we study the finite basis problem for some additively idempotent semirings that relate to $S_7$.
We present a sufficient condition under which an additively idempotent semiring variety is nonfinitely based and as applications, show that some additively idempotent semiring varieties that contain $S_7$ are also nonfinitely based.
We then consider the subdirectly irreducible members of the variety $\mathsf{V}(S_7)$ generated by $S_7$.  We show that $\mathsf{V}(S_7)$ contains exactly 6 finitely based subvarieties, all of which sit at the base of the subvariety lattice, then invoke results from the homomorphism theory of Kneser graphs to verify that $\mathsf{V}(S_7)$ contains a continuum of subvarieties. \end{abstract}

\maketitle

\section{Introduction and preliminaries}\label{sec:intro}
A \emph{variety} is a class of algebras that is closed under taking
subalgebras, homomorphic images and arbitrary direct products.
Equivalently, by Birkhoff's well known theorem, a class of algebras is a variety
if and only if it is an \emph{equational class}, that is,
the class of all algebras that satisfy some set of identities (also known as equations).
Let $\mathcal{V}$ be a variety.
One of the most important considerations on $\mathcal{V}$ is whether
it is \emph{finitely based}, that is, if it can be defined by finitely many identities.
The variety $\mathcal{V}$ is \emph{nonfinitely based} if it is not finitely based.
An algebra $A$ is called finitely based (nonfinitely based) if the variety $\mathsf{V}(A)$ generated by $A$ is finitely based (nonfinitely based).

In 1951 Lyndon \cite{lyn1} showed that every $2$-element algebra is finitely based.
Subsequently, Lyndon \cite{lyn2} provided a 7-element nonfinitely based groupoid,
which is the first example of nonfinitely based finite algebras.
In 1965 Murski\v{\i} \cite{mur} presented a 3-element nonfinitely based groupoid,
with further 3-element examples subsequently identified by Je\v{z}ek~\cite{jez}.
In 2022 Jackson, Ren and Zhao \cite{jrz} showed that there is a 3-element semiring that is additively idempotent and with commutative multiplication that is also nonfinitely based.
The Cayley tables for addition and multiplication of this semiring, denoted by $S_7$, can be found in Table~\ref{tb24111401}.
The semiring $S_7$ has the following remarkable property (see~\cite{jrz}).
Up to isomorphism, it is the only nonfinitely based additively idempotent semiring of order at most three.
Also, it can infect the nonfinitely based property to many other finite additively idempotent semirings, though it remains unknown if all finite additively idempotent semirings whose variety contains $S_7$ are nonfinitely based.
The present paper follows this line of investigation, by exploring finite basis problem and other variety-theoretic properties of  some additively idempotent semirings that relate to~$S_7$.
The infectiousness of the nonfinite basis property of $S_7$ is pushed to some new examples by way of a new nonfinite basis result.  We then explore the lattice of subvarieties of $\mathsf{V}(S_7)$, eventually showing that this has size  $2^{\aleph_0}$, and that all except six of these are nonfinitely based.

\begin{table}[ht]
\caption{The Cayley tables of $S_7$} \label{tb24111401}
\begin{tabular}{c|ccc}
$+$      &$\infty$&$a$&$1$\\
\hline
$\infty$ &$\infty$&$\infty$&$\infty$\\
$a$      &$\infty$&$a$&$\infty$\\
$1$      &$\infty$&$\infty$&$1$\\
\end{tabular}\qquad
\begin{tabular}{c|ccc}
$\cdot$  &$\infty$&$a$&$1$\\
\hline
$\infty$ &$\infty$&$\infty$&$\infty$\\
$a$      &$\infty$&$\infty$&$a$\\
$1$      &$\infty$&$a$&$1$\\
\end{tabular}
\end{table}

An \emph{additively idempotent semiring} (ai-semiring for short) is an algebra $(S, +, \cdot)$ with two binary operations $+$ and $\cdot$
such that the additive reduct $(S, +)$ is a commutative idempotent semigroup,
the multiplicative reduct $(S, \cdot)$ is a semigroup and $S$ satisfies the distributive laws
\[
(x+y)z\approx xy+xz,\quad x(y+z)\approx xy+xz.
\]
The set of all natural numbers forms an ai-semiring under the usual addition and multiplication.
One can easily find many other examples of ai-semirings in almost all branches of mathematics.
Such algebras have played important roles in several branches of mathematics
such as algebraic geometry \cite{cc}, tropical geometry \cite{ms}, information science \cite{gl} and
theoretical computer science \cite{go}.
In the last two decades, the finite basis problem for ai-semirings have been intensively studied
and well developed, for example,
see \cite{dol07, dgv, gpz05, jrz, pas05, rlzc, rjzl, sr, vol21, wrz, yrzs, zrc}.

A \emph{flat semiring} is an ai-semiring such that its multiplicative reduct has a zero element $0$
and $a+b=0$ for all distinct elements $a$ and $b$ of $S$.
Jackson et al.~\cite[Lemma 2.2]{jrz} proved that a semigroup with the zero element $0$
becomes a flat semiring with the top element $0$ if and only if it is
$0$-cancellative, that is, $ab=ac\neq0$ implies $b=c$ and $ba=ca\neq0$ implies $b=c$
for all $a, b, c\in S$.
More generally, for any partial algebra $P$ we may add a new element $\infty$, and a new operation $+$ making $P\cup\{\infty\}$ into a flat semilattice with top $\infty$, then create  a total algebra  on $P\cup\{\infty\}$ by letting $\infty$ be the output at otherwise undefined inputs.
This total algebra is known as the \emph{flat extension of $P$} and denoted $\flat(P)$.
When $P$ is a partial groupoid,
then $\flat(P)$ is a flat semiring exactly when $(xy)z$ is defined in $P$ if and only if $x(yz)$ is defined
and $(xy)z=x(yz)$ if they are both defined,
and the obvious analogue of $0$-cancellativity holds: $ab$ and $ac$ are defined and equal implies $b=c$, and dually for $ba$ and $ca$.

Let ${\bf F}$ denote the variety generated by all flat semirings.
The following result, due to Jackson et al.~\cite[Lemma 2.1]{jrz} (see also~\cite{jac:flat}), answered the finite basis problem for ${\bf F}$
and described the subdirectly irreducible members of ${\bf F}$.
\begin{lem}\label{lem24121301}
The variety ${\bf F}$ is finitely based and each subdirectly irreducible member of ${\bf F}$ is a flat semiring.
\end{lem}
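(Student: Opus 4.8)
The plan is to establish both halves of the lemma simultaneously, via Birkhoff's subdirect representation theorem. First I would isolate a \emph{finite} set $\Sigma$ of identities, consisting of the defining laws of ai-semirings together with a handful of extra identities, each easily checked to hold in every flat semiring; this gives ${\bf F}\subseteq\Mod(\Sigma)$ for free. The real work is to prove the converse in the sharp structural form: \emph{every subdirectly irreducible member of $\Mod(\Sigma)$ is a flat semiring}. Granting this, each $A\in\Mod(\Sigma)$ is a subdirect product of subdirectly irreducible quotients, all of which are flat semirings and hence lie in ${\bf F}$; thus $A\in{\bf F}$, giving $\Mod(\Sigma)={\bf F}$. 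This single statement yields both conclusions at once: ${\bf F}=\Mod(\Sigma)$ is finitely based, and its subdirectly irreducible members are flat semirings.

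The heart of the argument is therefore the claim that a subdirectly irreducible $A\in\Mod(\Sigma)$ is flat. I would approach this through the additive semilattice reduct $(A,+)$ with its order $x\le y\iff x+y=y$. Two things must be produced: a top element $\top$ that is simultaneously additively absorbing and a multiplicative zero, and the assertion that $A\setminus\{\top\}$ is an antichain, so that $(A,+)$ has height one and $A=\flat(P)$ for the partial groupoid $P$ obtained by discarding all products equal to $\top$. The mechanism I would use is to manufacture semiring congruences from filters of $(A,+)$: collapsing an up-set $F$ to a single point is always a $+$-congruence, and the distributive laws make it a full semiring congruence precisely when $F$ is a two-sided multiplicative ideal. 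The extra identities in $\Sigma$ are chosen exactly so as to guarantee a supply of such filter-ideals; subdirect irreducibility then forces them to coincide, collapsing $(A,+)$ to height one and identifying the top with a multiplicative zero. An equivalent and perhaps cleaner route is to invoke the flat-extension transfer machinery of~\cite{jac:flat}: flat semirings are the flat extensions $\flat(P)$ of the partial semigroups $P$ satisfying partial associativity together with the $0$-cancellation conditions recalled above, these conditions form a finitely axiomatized universal Horn class, and the transfer theorem sends such a class to a finitely based variety whose subdirectly irreducibles are flat extensions.

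The step I expect to be the main obstacle is precisely the passage from ``valid in all flat semirings'' to a \emph{finite} axiomatization forcing flatness of every subdirectly irreducible. The subtlety is that the additive reducts alone impose essentially no constraint: every semilattice, in particular non-flat ones such as chains, is a subdirect power of the two-element semilattice, so chains and other tall semilattices do occur inside members of ${\bf F}$, and are merely prevented from being subdirectly irreducible by the interaction of $+$ with $\cdot$. Capturing this interaction with finitely many identities, and verifying that they genuinely collapse every candidate subdirectly irreducible whose additive order has height exceeding one, or whose top fails to be a multiplicative zero, is the delicate point; it is where the $0$-cancellation content of flat semirings, translated into equational form, must do its work.
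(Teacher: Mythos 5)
The paper does not actually prove this lemma: it is quoted verbatim from \cite{jrz} (see also \cite{jac:flat}), so there is no in-paper argument to compare against. Your second route is precisely the mechanism of those sources: flat semirings are exactly the flat extensions $\flat(P)$ of partial groupoids $P$ satisfying partial associativity and the partial $0$-cancellativity conditions, which form a finitely axiomatized universal Horn class $K$; the transfer theorem of \cite{jac:flat} then gives a finite equational basis for $\mathsf{V}(\flat(K))={\bf F}$ and identifies its subdirectly irreducible members with flat extensions of partial algebras in $\mathsf{DSPP}_{\rm u}(K)$. To conclude that these are again flat \emph{semirings} you still need the small closure check that $\mathsf{DSPP}_{\rm u}(K)\subseteq K$ --- closure under $\mathsf{S}$, $\mathsf{P}$, $\mathsf{P}_{\rm u}$ is automatic for a universal Horn class, but closure under the divisor-restriction operator $\mathsf{D}$ must be verified for these particular Horn sentences; you should not omit that step. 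Your first route (an explicit finite $\Sigma$ together with a filter/ideal collapsing argument on subdirectly irreducibles) is plausible in spirit but is only a plan: $\Sigma$ is never exhibited, and, as you yourself flag, manufacturing identities that force every subdirectly irreducible model to be flat is exactly where all the content lies. As written, only the appeal to \cite{jac:flat} amounts to a complete proof, and that is the same proof the paper is implicitly relying on.
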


The following algebras form an important class of flat semirings.
Let $W$ be a nonempty subset of a free commutative semigroup,
and let $S_c(W)$ denote the set of all nonempty subwords of words in
$W$ together with a new symbol $0$. Define a binary operation $\cdot$ on $S_c(W)$ by the rule
\begin{equation*}
\bu\cdot \bv=
\begin{cases}
\bu\bv& \text{if }~\bu\bv\in S_c(W)\setminus \{0\}, \\
0& \text{otherwise.}
\end{cases}
\end{equation*}
Then $(S_c(W), \cdot)$ forms a commutative semigroup with zero element $0$.
It is easy to verify that $(S_c(W), \cdot)$ is $0$-cancellative and so $S_c(W)$ becomes a flat semiring.
In particular, if~$W$ consists of a single word $\bw$ we shall write $S_c(W)$ as $S_c(\bw)$.
If we allow the empty word in this construction, then there is a multiplicative identity element and the semigroup reduct is a monoid.
The notation $M_c(W)$ is used in this case.
If $a$ is a letter, then $M_c(a)$ is isomorphic to~$S_7$ and $S_c(a)$ can be embedded into~$S_7$.
If $1$ denotes the empty word, then $M_c(1)$ can be embedded into~$S_7$.
Note that $S_c(a)$ and $M_c(1)$ are denoted by $T_2$ and $M_2$ in Shao and Ren~\cite{sr}, respectively.

The non-commutative version of the $M_c(W)$ has also been explored: the subscript $c$ is dropped in the notations just given, giving $M(W)$ (where $W$ is a set of words from the free semigroup).  Examples of the form $M(W)$ have been particularly heavily explored in semigroup context, but have also appeared in semiring settings, such as Ren et al.~\cite{rjzl}.
Note that the single letter word is degenerate enough that $M_c(a)=M(a)$, so $S_7$ has relevance to both the commutative and noncommutative examples.

Let $X^+$ denote the free semigroup over a countably infinite set $X$ of variables.
By distributivity, all ai-semiring terms over $X$ are finite sums of words in $X^+$.
An \emph{ai-semiring identity} over $X$ is a formal expression of the form
\[
\bu\approx \bv,
\]
where $\bu$ and $\bv$ are ai-semiring terms over $X$.
Let $S$ be an ai-semiring, and let $\bu\approx \bv$ be an ai-semiring identity over $\{x_1, x_2, \ldots, x_n\}$.
Then $S$ \emph{satisfies} $\bu\approx \bv$ if
$\bu(a_1, a_2, \ldots, a_n)=\bv(a_1, a_2, \ldots, a_n)$ for all $a_1, a_2, \ldots, a_n\in S$, where $\bu(a_1, a_2, \ldots, a_n)$ denotes the result of evaluating $\bu$ in $S$ under the assignment $x_i\mapsto a_i$, and similarly for $\bv(a_1, a_2, \ldots, a_n)$.

Let $\bp$ be a word and $x$ a letter. Then $occ(x, \bp)$ denotes the number of occurrences of $x$ in $\bp$.
Let $\bu$ be an ai-semiring term. Then
\begin{itemize}
\item $c(\bu)$ denotes the set of variables that occur in $\bu$;

\item $\delta(\bu)$ denotes the set of nonempty subsets $Z$ of $c(\bu)$ such that for every $\bp \in \bu$,
$Z\cap c(\bp)$ is a singleton and $occ(x, \bp)=1$ if $\{x\}=Z\cap c(\bp)$.
\end{itemize}
The following result, which provides the solution of the equational problem for $S_7$,
is due to Jackson et al.~\cite[Proposition 5.5]{jrz}.
\begin{lem}\label{lemma24221510}
Let $\bu\approx \bv$ be an ai-semiring identity. Then $\bu\approx \bv$ holds in $S_7$
if and only if $c(\bu)=c(\bv)$ and $\delta(\bu)=\delta(\bv)$.
\end{lem}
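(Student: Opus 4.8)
The plan is to reduce everything to an explicit description of how words and terms evaluate in $S_7$, and then to recover $c(\bu)$ and $\delta(\bu)$ from the values of $\bu$ on two families of ``test'' assignments. First I would record the evaluation rules: multiplication is commutative with $1$ as identity and $a\cdot a=\infty$, while $\infty$ is absorbing for both operations; addition is the semilattice with top $\infty$ and $a+1=\infty$. Consequently, for any assignment $\phi$ into $\{\infty,a,1\}$, a word $\bp$ evaluates to $\infty$ if some variable of $\bp$ is sent to $\infty$ or at least two letter-occurrences of $\bp$ are sent to $a$; to $a$ if exactly one letter-occurrence is sent to $a$ and none to $\infty$; and to $1$ otherwise. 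Summing in the semilattice, $\bu(\phi)=a$ precisely when every word of $\bu$ takes value $a$, $\bu(\phi)=1$ precisely when every word takes value $1$, and $\bu(\phi)=\infty$ in all remaining cases, including the crucial ``mixed'' case in which one word is $a$ and another is $1$.

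Next I would isolate two test assignments. For a variable $x$, let $\psi_x$ send $x\mapsto\infty$ and all other variables to $1$; then $\bu(\psi_x)=\infty$ if $x\in c(\bu)$ and $\bu(\psi_x)=1$ otherwise. For a nonempty $Z\subseteq c(\bu)$, let $\phi_Z$ send the variables of $Z$ to $a$ and all others to $1$; by the evaluation rule, every word $\bp$ takes value $a$ under $\phi_Z$ exactly when $\sum_{z\in Z}\occ(z,\bp)=1$, which is precisely the condition that $Z\cap c(\bp)$ be a singleton occurring once, i.e.\ the defining condition for $Z\in\delta(\bu)$. Hence $\bu(\phi_Z)=a$ iff $Z\in\delta(\bu)$, and since $Z\subseteq c(\bu)$ is nonempty the only alternative value is $\infty$. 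These two computations give the forward direction at once: if $\bu\approx\bv$ holds in $S_7$, comparing values on each $\psi_x$ forces $c(\bu)=c(\bv)$, and then comparing values on each $\phi_Z$ with $Z\subseteq c(\bu)=c(\bv)$ forces $\delta(\bu)=\delta(\bv)$.

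For the converse, assume $c(\bu)=c(\bv)$ and $\delta(\bu)=\delta(\bv)$, take an arbitrary assignment $\phi$, and let $(I,A,O)$ be the partition of $c(\bu)=c(\bv)$ by the $\phi$-preimages of $\infty$, $a$, $1$. If $I\neq\emptyset$, pick $x\in I$; since $x\in c(\bu)=c(\bv)$ some word of each term contains $x$ and hence evaluates to $\infty$, so $\bu(\phi)=\bv(\phi)=\infty$. Otherwise no variable of either term is sent to $\infty$, so $\phi$ agrees on $c(\bu)=c(\bv)$ with the test assignment $\phi_A$. If $A=\emptyset$ both terms evaluate to $1$; if $A\neq\emptyset$ the previous paragraph yields $\bu(\phi)=a\iff A\in\delta(\bu)\iff A\in\delta(\bv)\iff\bv(\phi)=a$, with common value $\infty$ in the contrary case. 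Thus $\bu(\phi)=\bv(\phi)$ in every case.

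The conceptual heart, and the step I would treat most carefully, is the semantic reading of $\delta$: that the combinatorial requirement ``$Z\cap c(\bp)$ is a singleton occurring once in every word $\bp$'' is exactly the condition for the single-$a$ assignment $\phi_Z$ to return $a$ rather than collapsing to $\infty$. The only real subtlety in the routine verifications is the relation $a+1=\infty$, which is what sends a term with a mix of $a$-valued and $1$-valued words to $\infty$; keeping track of this mixed case is what rules out spurious values and makes the backward case analysis exhaustive.
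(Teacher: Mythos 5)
Your proof is correct and complete. The paper does not actually prove this lemma --- it is quoted from \cite{jrz} (Proposition 5.5) --- but your argument is the natural one for such an equational description: you compute word and term values in $S_7$ explicitly, recover $c(\bu)$ from the assignments sending one variable to $\infty$ and the rest to $1$, recover $\delta(\bu)$ from the assignments $\phi_Z$ sending $Z$ to $a$ and the rest to $1$ (correctly noting that for nonempty $Z\subseteq c(\bu)$ the only possible values are $a$ and $\infty$), and then observe that every assignment reduces to one of these cases; the handling of the mixed case via $a+1=\infty$ is exactly the point that needs care, and you treat it properly.
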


Let $S_7^0$ denote the ai-semiring that is obtained from $S_7$ by adding an extra element $0$,
where
\[
(\forall a\in S_7^0) \quad a+0=a,~ a0=0a=0.
\]
It is easy to see that $S_7$ is a subalgebra of $S_7^0$.
Jackson et al.~\cite{jrz} asked whether $S_7^0$ is finitely based.
Wu et al.~\cite{wrz} used a syntactic approach to show that every subvariety of $\mathsf{V}(S_7^0)$
that contains $S_7$ and the $2$-element distributive lattice is nonfinitely based.
In particular, $S_7^0$ is nonfinitely based.
In Section~\ref{sec:NFB} we shall use a different method to extend the above result.
The following result, which concerns the equational problem of $S^0_7$,
can be found in \cite[Proposition 2.2]{wrz}.
\begin{lem}\label{lemma24221511}
Let $\bu\approx \bu+\bq$ be an ai-semiring identity such that $\bu=\bu_1+\cdots+\bu_n$,
where $\bq, \bu_i\in X^+$, $1\leq i\leq n$.
If $c(\bu)=c(\bq)$ and $\delta(\bu)=\varnothing$,
then $\bu\approx \bu+\bq$ is satisfied by $S^0_7$.
\end{lem}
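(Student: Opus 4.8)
The plan is to use that the additive reduct of $S^0_7$ is a join-semilattice, with associated order $x\le y\iff x+y=y$, bottom $0$, top $\infty$, and $a,1$ incomparable. In this order the identity $\bu\approx\bu+\bq$ says exactly that $\bq(\varphi)\le\bu(\varphi)$ for every assignment $\varphi$ of the variables into $S^0_7$; since only the variables of $c(\bu)=c(\bq)$ occur, it suffices to check this inequality for assignments $\varphi\colon c(\bu)\to S^0_7$. First I would record the evaluation rule for a single word. As $0$ is a multiplicative zero and $\infty$ absorbs every nonzero element, a word takes value $0$ once some letter is sent to $0$, and value $\infty$ once some letter is sent to $\infty$ (with none sent to $0$). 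The only delicate case is an assignment into $\{a,1\}$: since $1$ is the multiplicative identity and $a^2=\infty$, a word $\bp$ evaluates to $1$, to $a$, or to $\infty$ according as the number of letters of $\bp$ sent to $a$, counted with multiplicity, is $0$, exactly $1$, or at least $2$.

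Next, using $c(\bu)=c(\bq)$, I would dispose of the assignments that involve $0$ or $\infty$. If some variable of $c(\bu)$ is sent to $0$, then it occurs in $\bq$, so $\bq(\varphi)=0\le\bu(\varphi)$. If none is sent to $0$ but some variable is sent to $\infty$, then that variable occurs in $\bq$ and in at least one summand $\bu_i$ (as $c(\bu)=\bigcup_i c(\bu_i)$), giving $\bq(\varphi)=\infty=\bu(\varphi)$. Thus it remains to treat assignments into $\{a,1\}$, which are precisely the maps $\varphi_Z$ sending a chosen set $Z\subseteq c(\bu)$ to $a$ and the rest to $1$.

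For $Z=\varnothing$ every word evaluates to $1$, so $\bq(\varphi_\varnothing)=\bu(\varphi_\varnothing)=1$ and the inequality holds. The crux is the case $Z\ne\varnothing$, where I would show that $\delta(\bu)=\varnothing$ forces $\bu(\varphi_Z)=\infty$. By the evaluation rule, $\bu_i(\varphi_Z)=a$ precisely when $Z\cap c(\bu_i)$ is a singleton $\{x\}$ with $\occ(x,\bu_i)=1$; hence the total $\bu(\varphi_Z)=\bu_1(\varphi_Z)+\cdots+\bu_n(\varphi_Z)$ equals $a$ if and only if every summand is $a$, that is, if and only if $Z\in\delta(\bu)$. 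Moreover $\bu(\varphi_Z)$ cannot equal $1$, because $Z\ne\varnothing$ and $Z\subseteq\bigcup_i c(\bu_i)$ force $Z\cap c(\bu_i)\ne\varnothing$ for some $i$, so $\bu_i(\varphi_Z)\ne1$. Since each summand lies in $\{1,a,\infty\}$ and such a sum is $\infty$ unless all summands coincide, ruling out the totals $1$ and $a$ yields $\bu(\varphi_Z)=\infty$. As $\delta(\bu)=\varnothing$ gives $Z\notin\delta(\bu)$ for every nonempty $Z$, we conclude $\bu(\varphi_Z)=\infty$, and then $\bq(\varphi_Z)\le\infty=\bu(\varphi_Z)$ trivially.

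I expect the main obstacle to be the clean identification in the last paragraph between the purely combinatorial hypothesis $\delta(\bu)=\varnothing$ and the semantic fact that $\bu$ attains the top value $\infty$ under every nonempty $\{a,1\}$-assignment. The remaining steps are routine bookkeeping about how products of $a$'s and $1$'s collapse; the real content is that $\delta$ is designed to detect exactly when a sum of words can dodge the value $\infty$, so its emptiness is precisely what pins $\bq$ beneath $\bu$.
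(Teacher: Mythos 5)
Your proof is correct and complete; the paper itself offers no argument for this lemma, simply citing Proposition~2.2 of Wu, Ren and Zhao \cite{wrz}, so there is nothing in the text to diverge from. Your case analysis --- reducing to assignments into $\{a,1\}$ via $c(\bu)=c(\bq)$, and then observing that a nonempty set $Z$ of variables sent to $a$ yields $\bu(\varphi_Z)=a$ exactly when $Z\in\delta(\bu)$, so that $\delta(\bu)=\varnothing$ forces $\bu(\varphi_Z)=\infty$ --- is precisely the computation one expects behind the cited result and is consistent with the characterization of $S_7$-identities in Lemma~\ref{lemma24221510}.
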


Next, we recall some knowledge on hypergraph semirings that are introduced by Jackson et al.~\cite{jrz}.
A \emph{$3$-uniform hypergraph} $\mathbb{H}$ is a pair $\langle V, E\rangle$,
where $E$ is a family of nonempty subsets of a set $V$, and $|e|=3$ for all $e\in E$.
Each element of $V$ is a \emph{vertex} of $\mathbb{H}$,
and each element of $E$ is a \emph{hyperedge} of $\mathbb{H}$.
Let $\mathbb{H}$ be a $3$-uniform hypergraph.
Then $\mathbb{H}$ is \emph{$2$-colourable} if there exists a mapping $\varphi: V \to \{0, 1\}$
such that $|\varphi(e)|=2$ for all $e\in E$, and
$\mathbb{H}$ is \emph{$2$-in-$3$ satisfiable} if there exists a mapping
$\varphi: V \to \{0, 1\}$ such that $|\varphi^{-1}(0) \cap e|=1$ for all $e\in E$.
It is easy to see that if ${\mathbb H}$ is $2$-in-$3$ satisfiable, then it must be $2$-colourable.
Colourings relate to homomorphisms, which we now describe.
A \emph{homomorphism} between hypergraphs~$\mathbb{G}$ and $\mathbb{H}$, is a function $\phi$ from the vertices of $\mathbb{G}$ to the vertices of $\mathbb{H}$ such that every hyperedge  of $\mathbb{G}$ maps to a hyperedge of $\mathbb{H}$.
An \emph{isomorphism} between hypergraphs is of course a homomorphism that is bijective on vertices and on hyperedges.

A \emph{cycle} of $\mathbb{H}$ is a sequence
$v_1, e_1, v_2, e_2, \ldots, v_{n}, e_{n}$
alternating between vertices and hyperedges, with no repeats,
such that $v_1\in e_1\cap e_{n}$ and $v_{i+1}\in e_i\cap e_{i+1}$, $1\leq i < n$.
The number $n$ is the \emph{length} of this cycle.
A \emph{girth} of $\mathbb{H}$ is the length of its shortest cycles.
Let $\ell\geq 2$ be an integer.  Then
\cite[Theorem 2.7]{hj} tells us that there is a $3$-uniform hypergraph
that is not $2$-colourable and whose girth is great than or equal to $\ell$.

Let $\mathbb{H}=\langle V, E\rangle$ be a $3$-uniform hypergraph that has no isolated vertices
and whose girth is greater than or equal to $5$.
A \emph{hypergraph semiring} $S_\mathbb{H}$ defined by $\mathbb{H}$ is a flat semiring generated by a copy
$\{\mathbf{a}_v \mid v\in V\}$ of $V$
along with a special element~$\infty$ and subject to the following rules:
\begin{itemize}
\item[$(1)$] $\infty$ is the multiplicative zero element;

\item[$(2)$] $\mathbf{a}_u\mathbf{a}_v=\mathbf{a}_v\mathbf{a}_u$ for all $u, v\in V$;

\item[$(3)$] $\mathbf{a}_u\mathbf{a}_v=\infty$ if $\{u, v\}$ is not a $2$-element subset of a hyperedge in $E$;

\item[$(4)$] $\mathbf{a}_{u_1}\mathbf{a}_{u_2}\mathbf{a}_{u_3}=\mathbf{a}_{v_1}\mathbf{a}_{v_2}\mathbf{a}_{v_3}$
if $\{u_1, u_2, u_3\}, \{v_1, v_2, v_3\}\in E$;

\item[$(5)$] $\mathbf{a}_{u_1}\mathbf{a}_{u_2}=\mathbf{a}_{v_1}\mathbf{a}_{v_2}$
if $\{u_1, u_2, \omega\}, \{v_1, v_2, \omega\} \in E$ for some $\omega\in V$.
\end{itemize}
We let $\mathbf{a}$ denote the element arising in item (4).
In Section~\ref{sec:blockhypergraph} we revisit this construction allowing a broader family of hypergraphs.

For other notations and terminology used in this paper, the reader is referred to
Jackson et al.~\cite{jrz} and Ren et al.~\cite{rjzl} for background on semirings,
and to Burris and Sankappanavar~\cite{bs} for information concerning universal algebra.
We shall assume that the reader is familiar with the basic results in these areas.


\section{A sufficient condition for the nonfinitely based property}\label{sec:NFB}
In this section we use hypergraph semirings to establish a sufficient condition
under which an ai-semiring variety is nonfinitely based.
As applications, we show that some ai-semiring varieties that contain $S_7$ are nonfinitely based.

Let $n\geq 2$ be a natural number, and let $\mathbb{H}=\langle V, E\rangle$
be a $3$-uniform hypergraph 
From $\mathbb{H}$ one can define some ai-semiring terms.
Let $\{x_v \mid v \in V_n\}$ be a set of variables that is a copy of $V$.
We shall use ${\bf t}_{\mathbb{H}}$ to denote the ai-semiring term
\[
\sum_{\{v_1, v_2, v_3\}\in E} x_{v_1}x_{v_2}x_{v_3},
\]
and $\bq_{_{\mathbb{H}}}$ to denote the word
\[
\prod_{v \in V}x_v,
\]
where it will suffice to adopt any fixed order on the variables in this product.
A product $x_{v_1}x_{v_2}x_{v_3}$ where $\{v_1, v_2, v_3\}\in E$ will be referred to as a \emph{hyperedge product} (for $\mathbb{H}$); every hyperedge gives rise to 6 different hyperedge products due to the fact that the hyperedge $\{v_1, v_2, v_3\}$ is unordered, so all~6 permutations of the variables satisfy the definition.  Let ${\bf w}$ denote any ai-semiring term in the variables $\{x_v \mid v \in V\}$ for which the evaluation $x_v\mapsto \mathbf{a}_v$ leads to $\bw$ \emph{not} taking the value $\mathbf{a}$.  We refer to such a term as a \emph{non-hyperedge term}.
Technically, non-hyperedge terms might depend on the hypergraph being considered, however there are many choices that apply to every hypergraph, simply because they involve products that are too long, or too short.
The following example gives three examples of such terms that will be used later in the paper.

\begin{example}
The following are non-hypergraph terms for every hypergraph, where the third item applies only in the case where $\mathbb{H}$ has more than~$3$ vertices\up:
\begin{align}
{\bf w} & := \bt_{\mathbb{H}}^2; \label{id112501}\\
{\bf w}  & := x_v \quad \text{\up(for some vertex $v$\up)};    \label{id112502}\\
{\bf w}& := \bq_{_{\mathbb{H}}} \label{id112503}
\end{align}
\end{example}

We  fix a family of $3$-uniform hypergraphs $\mathbb{H}_n=\langle V_n, E_n\rangle$ indexed by $n\in\mathbb{N}$
where $\mathbb{H}_n$ is not $2$-colourable and has girth greater than $3\binom{3n}{2}$; as explained in~\cite{jrz}, such a family exists by way of Erd\H{o}s and Hajnal~\cite{erdhaj}.  In this context, ${\bf w}_n$ will be used for a non-hypergraph term relative to $\mathbb{H}_n$.
\begin{thm}\label{thm24112501}
Let $\mathcal{V}$ be an ai-semiring variety that contains $S_c(abc)$,
and for $n\geq 2$ let ${\bf w}_n$ be a non-hyperedge term for $\mathbb{H}_n$.
If $\mathcal{V}$ satisfies the identity
\begin{equation}
\bt_{\mathbb{H}_n}  \approx \bt_{\mathbb{H}_n}+{\bf w}_n\label{id112500}
\end{equation}
for all $n\geq 2$,
then $\mathcal{V}$ has no finite basis for its equational theory.
\end{thm}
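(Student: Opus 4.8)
The plan is to use the standard Baker--Sapir style nonfinite-basis argument driven by the family of hypergraphs $\mathbb{H}_n$. Suppose for contradiction that $\mathcal{V}$ were finitely based, say by a set $\Sigma$ of identities each involving at most some fixed number $k$ of variables. The goal is to exhibit, for a suitable large $n$, an ai-semiring $A_n$ that satisfies every identity of $\Sigma$ (hence lies in $\mathcal{V}$, assuming $\Sigma$ axiomatises $\mathcal{V}$) yet fails the identity $\eqref{id112500}$; since $\mathcal{V}$ satisfies $\eqref{id112500}$ for all $n$, this is the required contradiction.

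The natural candidate for the witnessing algebra is the hypergraph semiring $S_{\mathbb{H}_n}$ (or a close relative built from $\mathbb{H}_n$). First I would verify that $S_{\mathbb{H}_n}$ fails $\eqref{id112500}$: under the evaluation $x_v\mapsto \mathbf{a}_v$, the left side $\bt_{\mathbb{H}_n}$ evaluates to $\mathbf{a}$ (each hyperedge product $x_{v_1}x_{v_2}x_{v_3}$ with $\{v_1,v_2,v_3\}\in E_n$ takes the value $\mathbf{a}$ by rule~(4) defining $S_{\mathbb{H}}$, and the additive idempotence collapses the sum), whereas the non-hyperedge term $\bw_n$ by definition does \emph{not} take the value $\mathbf{a}$, so adding it strictly changes the flat-semiring value. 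Hence $\bt_{\mathbb{H}_n}\neq\bt_{\mathbb{H}_n}+\bw_n$ in $S_{\mathbb{H}_n}$. I would also record that $S_c(abc)\in\mathcal V$ gives us enough multiplicative structure (length-three nonzero products) to match the hyperedge products, so the hypergraph semiring genuinely sits inside the relevant framework.

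The crux is the positive direction: showing that $S_{\mathbb{H}_n}$ satisfies every identity in $\Sigma$ once $n$ is large. This is where the large-girth hypothesis ($\text{girth} > 3\binom{3n}{2}$) and non-$2$-colourability do the work. The standard mechanism is that any identity with at most $k$ variables that \emph{fails} in $S_{\mathbb{H}_n}$ must, by unwinding the defining rules (1)--(5), encode a short cycle or a $2$-colouring obstruction in $\mathbb{H}_n$: a substitution witnessing failure pulls back to a map from a bounded-size configuration of variables into the vertices $V_n$, and because the configuration is small relative to the girth, its image cannot wind around any cycle and is forced to behave like a homomorphism into a $2$-colourable or acyclic target, contradicting the choice of $\mathbb{H}_n$. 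I expect the main obstacle to be making this pullback precise---formalising how a failing assignment of a short identity induces a short closed walk or a proper $2$-colouring, and bounding its size by $3\binom{3n}{2}$ so that girth forbids it. Once that counting lemma is in place, the conclusion is immediate: no finite $\Sigma$ can axiomatise $\mathcal V$, because each candidate $\Sigma$ is defeated by $S_{\mathbb{H}_n}$ for $n$ chosen larger than the variable bound of $\Sigma$. I would lean on Lemma~\ref{lemma24221510} and the flat-semiring structure from Lemma~\ref{lem24121301} to control how terms evaluate, reducing the combinatorial core to the hypergraph girth argument already cited from \cite{jrz, erdhaj}.
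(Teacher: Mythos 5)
Your overall framework is the paper's: for each $n$ you exhibit $S_{\mathbb{H}_n}$ as an algebra satisfying all identities of $\mathcal{V}$ in few variables while failing \eqref{id112500}, and your verification that $S_{\mathbb{H}_n}$ fails \eqref{id112500} (via $x_v\mapsto\mathbf{a}_v$, giving $\mathbf{a}$ on the left and something else once $\bw_n$ is added) is exactly the paper's. The gap is in what you yourself call the crux, the positive direction, and it is not merely a matter of ``making the pullback precise.'' Your key claim --- that any identity in at most $k$ variables failing in $S_{\mathbb{H}_n}$ must encode a short cycle or a $2$-colouring obstruction --- is false as stated. For example, $xyz\approx (xyz)^2$ is a $3$-variable identity that fails in every $S_{\mathbb{H}_n}$ (evaluate the variables at a hyperedge: $\mathbf{a}\neq\mathbf{a}^2=\infty$) with no reference to girth or colourability; so if $\mathcal{V}$ happened to satisfy such an identity, no choice of $n$ would make $S_{\mathbb{H}_n}$ model your finite basis $\Sigma$. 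What rules this out is precisely the hypothesis $S_c(abc)\in\mathcal{V}$, which you mention only in passing and never actually deploy: every identity of $\mathcal{V}$ must hold in $S_c(abc)$, and the correct combinatorial statement --- the one the girth bound $3\binom{3n}{2}$ actually buys --- is that every $n$-generated subalgebra of $S_{\mathbb{H}_n}$ lies in $\mathsf{V}(S_c(abc))$, hence in $\mathcal{V}$. The paper imports this directly from the proof of Theorem~4.9 of \cite{jrz}; since an identity in at most $n$ variables holds in an algebra if and only if it holds in all of its $n$-generated subalgebras, the positive direction is then immediate. (Note also that the non-$2$-colourability of $\mathbb{H}_n$ plays no role in the proof of this theorem; it is used only in the corollaries, via Proposition~\ref{pro24112701}, to verify that specific varieties satisfy the hypothesis \eqref{id112500}.)

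To repair your argument you need to (i) replace the false claim by the statement that all $n$-generated subalgebras of $S_{\mathbb{H}_n}$ lie in $\mathsf{V}(S_c(abc))$, which is where the large-girth hypothesis is consumed, and (ii) invoke the containment $S_c(abc)\in\mathcal{V}$ to conclude that these subalgebras lie in $\mathcal{V}$ and hence that $S_{\mathbb{H}_n}$ satisfies every at-most-$n$-variable identity of $\mathcal{V}$. With those two repairs your proof coincides with the paper's.
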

\begin{proof}
The strategy of the proof is as follows.
We shall show that for every $n \geq 2$, the set
of all $n$-variable identities of $\mathcal{V}$ does not form a basis for the equational theory of $\mathcal{V}$.
For this it is enough to prove that for every $n \geq 2$,
$S_{\mathbb{H}_n}$ does not lie in~$\mathcal{V}$,
but all $n$-generated subalgebras of $S_{\mathbb{H}_n}$ do lie in~$\mathcal{V}$.

Let $n\geq 2$.
From the proof of \cite[Theorem 4.9]{jrz}
we know that each $n$-generated subalgebra of $S_{\mathbb{H}_n}$
is a member of $\mathsf{V}(S_c(abc))$ and so it lies in $\mathcal{V}$.
Now let us consider the substitution $\varphi: X \to S_{\mathbb{H}_n}$
defined by $\varphi(x_v)=\mathbf{a}_v$ for all $v\in V$.
It is easy to see that $\varphi(t_{\mathbb{H}_n})=\mathbf{a}$ and by the definition of ${\bf w}_n$
we also have that $\varphi(\bw_n)\neq \mathbf{a}$.
This shows that $S_{\mathbb{H}_n}$ does not satisfy the identity~\eqref{id112500}.
It follows that $S_{\mathbb{H}_n}$ is not a member of~$\mathcal{V}$.
Thus $\mathcal{V}$ is nonfinitely based as required.
\end{proof}
\begin{remark}
The choice of $\bw_n:=\bq_{_{\mathbb{H}_n}}$ in \eqref{id112500} is a hybrid of that used in~\cite{jrz} and~\cite{wrz} and was also recently employed in the context of HSI algebras by Alsulami and Jackson~\cite{alsjac}.
\end{remark}

\begin{pro}\label{pro24112701}
Let ${\mathbb H}=\langle V, E\rangle$ be a $3$-uniform hypergraph. Then
$\delta(\bt_{\mathbb H})\neq \varnothing$ if and only if $\mathbb H$ is $2$-in-$3$ satisfiable.
\end{pro}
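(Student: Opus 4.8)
The plan is to show that both sides of the equivalence are literal reformulations of the same combinatorial condition on $V$, so the proof is essentially an exercise in unwinding the two definitions. I will assume throughout that $E\neq\varnothing$: the empty hypergraph is a degenerate exception, since $\bt_{\mathbb H}$ is then the empty sum with $c(\bt_{\mathbb H})=\varnothing$ and hence $\delta(\bt_{\mathbb H})=\varnothing$, whereas the edgeless hypergraph is vacuously $2$-in-$3$ satisfiable.

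First I would unpack $\delta(\bt_{\mathbb H})$. The words $\bp$ occurring in $\bt_{\mathbb H}$ are exactly the hyperedge products $x_{v_1}x_{v_2}x_{v_3}$ with $\{v_1,v_2,v_3\}\in E$; for such a $\bp$ we have $c(\bp)=\{x_v\mid v\in e\}$, where $e$ is the corresponding hyperedge, and since $e$ is a $3$-element set of distinct vertices, every variable of $\bp$ occurs exactly once. Consequently the occurrence clause in the definition of $\delta$ is automatically fulfilled, and a nonempty $Z\subseteq c(\bt_{\mathbb H})$ lies in $\delta(\bt_{\mathbb H})$ if and only if $|Z\cap c(\bp)|=1$ for every hyperedge product $\bp$, i.e.\ if and only if $|Z\cap\{x_v\mid v\in e\}|=1$ for every $e\in E$.

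Next I would pass from variables back to vertices. Writing $c(\bt_{\mathbb H})=\{x_v\mid v\text{ is non-isolated}\}$, a nonempty $Z\subseteq c(\bt_{\mathbb H})$ corresponds bijectively to a nonempty set $W=\{v\mid x_v\in Z\}$ of non-isolated vertices, and the condition just derived reads $|W\cap e|=1$ for all $e\in E$. It then remains to match this with $2$-in-$3$ satisfiability. Given such a $W$, the map $\varphi\colon V\to\{0,1\}$ with $\varphi^{-1}(0)=W$ satisfies $|\varphi^{-1}(0)\cap e|=1$ for every $e\in E$, witnessing $2$-in-$3$ satisfiability. Conversely, from a $2$-in-$3$ satisfying $\varphi$ I would set $W=\varphi^{-1}(0)$; each edge meets $W$ in exactly one vertex, so (using $E\neq\varnothing$) $W$ is nonempty, and any isolated vertex may be recoloured $1$ without disturbing the constraints, so $W$ may be taken to consist of non-isolated vertices, yielding the nonempty witness $Z=\{x_v\mid v\in W\}\in\delta(\bt_{\mathbb H})$.

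There is no substantial obstacle here; the argument is a definition-chase, and the content lies entirely in recognising that the singleton-intersection requirement in $\delta$ is precisely the $2$-in-$3$ condition. The only two points needing care are (a) verifying that the occurrence condition of $\delta$ is vacuous for $3$-uniform hyperedge products, so that the singleton-intersection condition is the whole story, and (b) reconciling the fact that $Z$ ranges only over $c(\bt_{\mathbb H})$, and hence ignores isolated vertices, with $\varphi$ being defined on all of $V$; the latter is handled by recolouring isolated vertices with $1$.
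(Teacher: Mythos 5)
Your proposal is correct and follows essentially the same route as the paper's own proof: translating a witness $Z\in\delta(\bt_{\mathbb H})$ into the colouring $\varphi$ with $\varphi^{-1}(0)$ corresponding to $Z$, and vice versa. Your treatment of the degenerate case $E=\varnothing$ and of isolated vertices is slightly more careful than the paper's, but the substance is identical.
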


\begin{proof}
If $\delta(t_{\mathbb H}) \neq \varnothing$ then there exists $Z\subseteq c(\bt_{\mathbb H})$ such that
$|Z\cap \{x_{v_1},\ldots,x_{v_k}\}|=1$ for all $\{v_1,\ldots,v_k\}\in E$.
Consider the mapping $\varphi: V \to \{0, 1\}$ defined by
\[
\varphi(v)=\begin{cases}
  0,& x_v\in Z\\
  1,& x_v\notin Z.
\end{cases}
\]
It is easy to see that $|\varphi^{-1}(0)\cap e|=1$ for all $e\in E$.
So $\mathbb H$ is $2$-in-$3$ satisfiable.

Conversely, if $\mathbb H$ is $2$-in-$3$ satisfiable,
then there exists a mapping $\varphi: V \to \{0, 1\}$ such that $|\varphi^{-1}(0)\cap e|=1$ for all $e\in E$.
Let $Z$ denote the set $\{x_v \mid \varphi(v)=0\}$.
It is a routine matter to verify that $Z$ is an element of $\delta(\bt_{\mathbb H})$. Thus $\delta(\bt_{\mathbb H})\neq \varnothing$ as required.
\end{proof}

Let $\mathcal{V}_1$ and $\mathcal{V}_2$ be ai-semiring varieties such that $\mathcal{V}_1$ is contained in $\mathcal{V}_2$.
Then the interval $[\mathcal{V}_1, \mathcal{V}_2]$ denotes the class of all subvarieties of $\mathcal{V}_2$ that contain $\mathcal{V}_1$.
From \cite[Proposition 2.6]{jrz} we know that $\mathsf{V}(S_7)$ contains $S_c(abc)$.

\begin{cor}\label{cor24112501}
Every variety in $[\mathsf{V}(S_c(abc)), \mathsf{V}(S_7^0)]$ is nonfinitely based.
\end{cor}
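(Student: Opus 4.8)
The plan is to reduce the corollary to a single application of Theorem~\ref{thm24112501}, verifying its two hypotheses uniformly across the whole interval. Fix an arbitrary variety $\mathcal{V}$ with $\mathsf{V}(S_c(abc)) \subseteq \mathcal{V} \subseteq \mathsf{V}(S_7^0)$. The lower containment immediately gives that $\mathcal{V}$ contains $S_c(abc)$, so the first hypothesis of Theorem~\ref{thm24112501} holds for free. The real work is to produce, for each $n \geq 2$, a non-hyperedge term $\bw_n$ for which the identity~\eqref{id112500} holds throughout $\mathcal{V}$; and since every identity of $S_7^0$ descends to each of its subvarieties, it suffices to establish~\eqref{id112500} in $S_7^0$ itself.

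First I would take $\bw_n := \bq_{_{\mathbb{H}_n}}$, which is a legitimate non-hyperedge term by the example following~\eqref{id112503}, valid because each $\mathbb{H}_n$, being non-$2$-colourable, certainly has more than three vertices. The target identity then reads $\bt_{\mathbb{H}_n} \approx \bt_{\mathbb{H}_n}+\bq_{_{\mathbb{H}_n}}$, which is exactly of the shape governed by Lemma~\ref{lemma24221511}: the left side $\bt_{\mathbb{H}_n}$ is a sum of words (the hyperedge products), and $\bq_{_{\mathbb{H}_n}}$ is a single word. To invoke that lemma I must check its two conditions, namely $c(\bt_{\mathbb{H}_n}) = c(\bq_{_{\mathbb{H}_n}})$ and $\delta(\bt_{\mathbb{H}_n}) = \varnothing$.

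The variable-set condition $c(\bt_{\mathbb{H}_n}) = c(\bq_{_{\mathbb{H}_n}})$ holds because $\bq_{_{\mathbb{H}_n}}$ ranges over all vertices while $\bt_{\mathbb{H}_n}$ ranges over the vertices lying in a hyperedge; these coincide once we note that $\mathbb{H}_n$ has no isolated vertices (any isolated vertex could be discarded without affecting non-$2$-colourability or girth, so this may be assumed, and is in any case required for the semiring $S_{\mathbb{H}_n}$ in Theorem~\ref{thm24112501} to be defined). The key condition $\delta(\bt_{\mathbb{H}_n}) = \varnothing$ is where I would route through Proposition~\ref{pro24112701}, which equates $\delta(\bt_{\mathbb{H}_n}) \neq \varnothing$ with $2$-in-$3$ satisfiability of $\mathbb{H}_n$. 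Since $2$-in-$3$ satisfiability forces $2$-colourability, and each $\mathbb{H}_n$ is chosen to be non-$2$-colourable, $\mathbb{H}_n$ fails to be $2$-in-$3$ satisfiable; hence $\delta(\bt_{\mathbb{H}_n}) = \varnothing$.

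With both conditions in hand, Lemma~\ref{lemma24221511} delivers $\bt_{\mathbb{H}_n} \approx \bt_{\mathbb{H}_n}+\bq_{_{\mathbb{H}_n}}$ in $S_7^0$, hence in $\mathcal{V}$, for every $n \geq 2$. Theorem~\ref{thm24112501} then applies to $\mathcal{V}$ and yields that $\mathcal{V}$ is nonfinitely based. I expect the only genuinely delicate point to be the chain connecting the combinatorics of $\mathbb{H}_n$ to the syntactic condition $\delta(\bt_{\mathbb{H}_n}) = \varnothing$, that is, correctly exploiting non-$2$-colourability (rather than merely the failure of $2$-in-$3$ satisfiability) together with Proposition~\ref{pro24112701}; everything else amounts to bookkeeping about variable sets and the descent of identities to subvarieties.
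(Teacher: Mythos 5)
Your proposal is correct and follows essentially the same route as the paper's own proof: choose $\bw_n:=\bq_{_{\mathbb{H}_n}}$, use non-$2$-colourability of $\mathbb{H}_n$ together with Proposition~\ref{pro24112701} to get $\delta(\bt_{\mathbb{H}_n})=\varnothing$, apply Lemma~\ref{lemma24221511} to establish the identity in $S_7^0$ (hence in every subvariety), and conclude via Theorem~\ref{thm24112501}. The extra care you take over $c(\bt_{\mathbb{H}_n})=c(\bq_{_{\mathbb{H}_n}})$ and the absence of isolated vertices is a reasonable elaboration of a step the paper treats as immediate.
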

\begin{proof}
Let $\mathcal{V}$ be an arbitrary variety in $[\mathsf{V}(S_c(abc)), \mathsf{V}(S_7^0)]$.
By Theorem \ref{thm24112501} it is enough to prove that $\mathcal{V}$ satisfies
the identity \eqref{id112500} for all $n\geq2$ in the case of the choice of $\bw_n$ from \eqref{id112503}.
We know that $\mathbb{H}_n$ is not $2$-colourable.
By Proposition \ref{pro24112701} one can deduce that $\delta(\bt_{\mathbb{H}_n})$ is empty.
Since $c(\bt_{\mathbb{H}_n})=c(\bq_{_{\mathbb{H}_n}})$, it follows from
Lemma~\ref{lemma24221511} that the identity \eqref{id112500}
is satisfied by $S_7^0$ and so does hold in $\mathcal{V}$.
\end{proof}

\begin{remark}
By using completely different method,
Corollary \ref{cor24112501} extends \cite[Theorem 2.3]{wrz}, which asserts that
every subvariety of $\mathsf{V}(S_7^0)$ that contains $S_7$ and the $2$-element distributive lattice is nonfinitely based.
\end{remark}

In the remainder we shall apply Theorem \ref{thm24112501} to answer the finite basis problem for some $4$-element ai-semirings.
Following the notations in \cite{rlzc, yrzs},
these algebras are denoted by $S_{(4, k)}$, where $k=84, 94, 117, 123, 173, 282, 359$.
We assume that the carrier set of each of these semirings is $\{\infty, a, 1, b\}$.
Their Cayley tables for addition and multiplication are listed in Table \ref{tab24112801}.
\begin{table}[ht]
\caption{Cayley tables of some 4-element ai-semirings}\label{tab24112801}
\setlength{\tabcolsep}{2.6pt}
\begin{tabular}{ccccccc}
\hline
Semiring&  $+$ &  $\cdot$ &Semiring& $+$ &  $\cdot$& \\
\hline
$S_{(4, 84)}$
&
\begin{tabular}{cccc}
$\infty$&$\infty$&$\infty$&$\infty$\\
      $\infty$&$a$&$\infty$&$\infty$\\
      $\infty$&$\infty$&$1$&$b$\\
     $\infty$&$\infty$&$b$&$b$\\
\end{tabular}
&
\begin{tabular}{cccc}
$\infty$&$\infty$&$\infty$&$\infty$\\
     $\infty$&$\infty$&$a$&$\infty$\\
     $\infty$&$a$&$1$&$\infty$\\
     $\infty$&$\infty$&$\infty$&$\infty$\\
\end{tabular}
&
$S_{(4, 94)}$
&
\begin{tabular}{cccc}
$\infty$&$\infty$&$\infty$&$\infty$\\
      $\infty$&$a$&$\infty$&$\infty$\\
      $\infty$&$\infty$&$1$&$b$\\
     $\infty$&$\infty$&$b$&$b$\\
\end{tabular}
&
\begin{tabular}{cccc}
$\infty$&$\infty$&$\infty$&$\infty$\\
     $\infty$&$\infty$&$a$&$\infty$\\
     $\infty$&$a$&$1$&$\infty$\\
     $\infty$&$b$&$\infty$&$\infty$\\
\end{tabular}
\\ \hline
$S_{(4, 117)}$
&
\begin{tabular}{cccc}
$\infty$&$\infty$&$\infty$&$\infty$\\
      $\infty$&$a$&$\infty$&$\infty$\\
      $\infty$&$\infty$&$1$&$b$\\
     $\infty$&$\infty$&$b$&$b$\\
\end{tabular}
&
\begin{tabular}{cccc}
$\infty$&$\infty$&$\infty$&$\infty$\\
     $\infty$&$\infty$&$a$&$b$\\
     $\infty$&$a$&$1$&$\infty$\\
     $\infty$&$\infty$&$\infty$&$\infty$\\
\end{tabular}
&
$S_{(4, 123)}$
&
\begin{tabular}{cccc}
$\infty$&$\infty$&$\infty$&$\infty$\\
      $\infty$&$a$&$\infty$&$\infty$\\
      $\infty$&$\infty$&$1$&$b$\\
     $\infty$&$\infty$&$b$&$b$\\
\end{tabular}
&
\begin{tabular}{cccc}
$\infty$&$\infty$&$\infty$&$\infty$\\
     $\infty$&$\infty$&$a$&$b$\\
     $\infty$&$a$&$1$&$\infty$\\
     $\infty$&$b$&$\infty$&$\infty$\\
\end{tabular}
\\ \hline
$S_{(4, 173)}$
&
\begin{tabular}{cccc}
$\infty$&$\infty$&$\infty$&$\infty$\\
      $\infty$&$a$&$\infty$&$\infty$\\
      $\infty$&$\infty$&$1$&$b$\\
     $\infty$&$\infty$&$b$&$b$\\
\end{tabular}
&
\begin{tabular}{cccc}
$\infty$&$\infty$&$\infty$&$\infty$\\
     $\infty$&$\infty$&$a$&$a$\\
     $\infty$&$a$&$1$&$1$\\
     $\infty$&$a$&$1$&$1$\\
\end{tabular}
&
$S_{(4, 282)}$
&
\begin{tabular}{cccc}
$\infty$&$\infty$&$\infty$&~$b$\\
      $\infty$&$a$&$\infty$&~$b$\\
      $\infty$&$\infty$&$1$&~$b$\\
     $b$&\,$b$&\,$b$&~$b$\\
\end{tabular}
&
\begin{tabular}{cccc}
$\infty$&$\infty$&$\infty$&$\infty$\\
     $\infty$&$\infty$&$a$&$\infty$\\
     $\infty$&$a$&$1$&$\infty$\\
     $\infty$&$\infty$&$\infty$&$\infty$\\
\end{tabular}
\\ \hline
$S_{(4, 359)}$
&
\begin{tabular}{cccc}
$\infty$&$\infty$&$\infty$&~$b$\\
      $\infty$&$a$&$\infty$&~$b$\\
      $\infty$&$\infty$&$1$&~$b$\\
     $b$\,&$b$\,&$b$\,&~$b$\\
\end{tabular}
&
\begin{tabular}{cccc}
$b$&\,$\infty$ & \,$b$& ~\,$b$\\
     $\infty$&\,$b$& $a$&~\,$b$\\
     $b$&$a$&$1$&~\,$b$\\
     $b$&\,$b$&\,$b$&~\,$b$\\
\end{tabular}
\\ \hline
\end{tabular}
\end{table}

\begin{cor}
If $S$ is an ai-semiring in $\{S_{(4,84)},S_{(4,94)},S_{(4,117)},S_{(4,173)},S_{(4,282)}\}$,
then $S$ is nonfinitely based.
\end{cor}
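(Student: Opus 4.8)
The plan is to apply Theorem~\ref{thm24112501} directly, so the entire task reduces to verifying, for each of the five semirings $S\in\{S_{(4,84)},S_{(4,94)},S_{(4,117)},S_{(4,173)},S_{(4,282)}\}$, two things: first, that $\mathsf{V}(S)$ contains $S_c(abc)$, and second, that $\mathsf{V}(S)$ satisfies the identity~\eqref{id112500} for all $n\ge 2$ under some choice of non-hyperedge term $\bw_n$. Since each $S$ has carrier $\{\infty,a,1,b\}$ and contains the subalgebra $\{\infty,a,1\}\cong S_7$, and since $\mathsf{V}(S_7)$ contains $S_c(abc)$ by the remark preceding Corollary~\ref{cor24112501}, the first condition is automatic: $S_c(abc)\in\mathsf{V}(S_7)\subseteq\mathsf{V}(S)$. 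So the real content is the second condition.

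For the second condition I would try the choice $\bw_n:=\bq_{_{\mathbb H_n}}$ from~\eqref{id112503}, exactly as in Corollary~\ref{cor24112501}. There the key facts were that $\mathbb H_n$ is not $2$-colourable, hence not $2$-in-$3$ satisfiable, hence $\delta(\bt_{\mathbb H_n})=\varnothing$ by Proposition~\ref{pro24112701}, and that $c(\bt_{\mathbb H_n})=c(\bq_{_{\mathbb H_n}})$; Lemma~\ref{lemma24221511} then yields that $S_7^0$ satisfies~\eqref{id112500}. To reuse this argument I need each $S$ to satisfy the same identity. The cleanest route is to show that each of these five semirings lies in $\mathsf{V}(S_7^0)$, i.e.\ that $S\in[\mathsf V(S_c(abc)),\mathsf V(S_7^0)]$, after which Corollary~\ref{cor24112501} applies verbatim. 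Thus the plan narrows further to proving $S\in\mathsf V(S_7^0)$ for each of the five.

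To establish $S\in\mathsf V(S_7^0)$ I would check directly that $S$ satisfies every identity valid in $S_7^0$; by general principles it suffices to exhibit $S$ as a subalgebra of a power of $S_7^0$, or more practically to verify that the defining identities of $\mathsf V(S_7^0)$ hold in $S$. A more hands-on alternative, well suited to these small algebras, is to produce a subdirect embedding: the extra element $b$ in each table behaves like the adjoined zero $0$ of $S_7^0$ (note in several tables $b$ multiplies to $\infty$ with everything and the additive action of $b$ mimics the absorbing behaviour of~$0$), so I expect a homomorphism or embedding relating $S$ to $S_7^0$ or a small power thereof. The main obstacle is precisely this membership verification, because the five semirings have genuinely different multiplication tables for~$b$ (for instance $S_{(4,94)}$ and $S_{(4,117)}$ place $b$ in off-diagonal products whereas $S_{(4,84)}$ sends all $b$-products to $\infty$), so a single uniform embedding may not work and I would likely have to treat the cases in small groups, matching each table against the $S_7^0$ structure. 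Should direct membership in $\mathsf V(S_7^0)$ fail for some $S$, the fallback is to verify~\eqref{id112500} in $S$ by a direct evaluation argument: show that under any assignment making $\bt_{\mathbb H_n}$ collapse appropriately, adding $\bq_{_{\mathbb H_n}}$ changes nothing, exploiting that the long word $\bq_{_{\mathbb H_n}}$ forces a product that necessarily evaluates to the top element~$\infty$ because $\mathbb H_n$'s large girth prevents the variables from all lying in a consistent hyperedge pattern.
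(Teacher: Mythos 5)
Your opening reduction is fine and matches the paper: each of the five semirings contains $\{\infty,a,1\}$ as a copy of $S_7$, so $S_c(abc)\in\mathsf{V}(S_7)\subseteq\mathsf{V}(S)$, and everything hinges on verifying an identity of the form~\eqref{id112500}. But your main route --- proving $S\in\mathsf{V}(S_7^0)$ and then quoting Corollary~\ref{cor24112501} --- provably fails for at least three of the five algebras, so this is a genuine gap rather than a detail to be checked. The semiring $S_7^0$ has commutative multiplication, whereas $S_{(4,94)}$ and $S_{(4,117)}$ do not: in $S_{(4,94)}$ one reads off $b\cdot a=b$ but $a\cdot b=\infty$, and in $S_{(4,117)}$ one has $a\cdot b=b$ but $b\cdot a=\infty$. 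Hence neither lies in $\mathsf{V}(S_7^0)$. For $S_{(4,282)}$ the obstruction is additive: $S_7^0$ satisfies $x+x^2\approx x^2$ (check all four elements, including $0$), while in $S_{(4,282)}$ we have $b+b^2=b+\infty=b\neq\infty=b^2$. So the interval $[\mathsf{V}(S_c(abc)),\mathsf{V}(S_7^0)]$ simply does not contain these varieties, and Corollary~\ref{cor24112501} cannot be applied to them.

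Your fallback is the right general idea (a direct evaluation argument for~\eqref{id112500}) but as stated it rests on a false claim: $\bq_{_{\mathbb{H}_n}}$ does \emph{not} necessarily evaluate to $\infty$ --- the assignment sending every variable to $1$ gives $\varphi(\bq_{_{\mathbb{H}_n}})=1$ --- and even when it does, you would still need $\varphi(\bt_{\mathbb{H}_n})+\infty=\varphi(\bt_{\mathbb{H}_n})$, which forces you to control $\varphi(\bt_{\mathbb{H}_n})$ anyway (note also that in $S_{(4,282)}$ the additive top is $b$, not $\infty$). What is actually needed, and what the paper does, is a case analysis on the possible values of $\varphi(\bt_{\mathbb{H}_n})\in\{\infty,1,a,b\}$ using the non-$2$-colourability of $\mathbb{H}_n$ (not its girth): the value $a$ is impossible because the only length-$3$ factorisations of $a$ have the pattern of a $2$-in-$3$ satisfying assignment, and the value $b$ is ruled out using the fact that all permutations of a hyperedge occur as summands. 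The paper then takes $\bw_n:=\bt_{\mathbb{H}_n}^2$ from~\eqref{id112501}, which works because the surviving values $\infty$ and $1$ are multiplicative idempotents, so $\varphi(\bt_{\mathbb{H}_n})=\varphi(\bt_{\mathbb{H}_n}^2)$ and the identity holds trivially. None of this case analysis appears in your proposal, so the argument as written does not establish the corollary.
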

\begin{proof}
It is easily verified that $S_7$ can be embedded into $S$ and so $\mathsf{V}(S)$ contains $S_c(abc)$.
Let $n\geq2$, and let $\varphi: X\to S$ be an arbitrary substitution.
If $\varphi(t_{\mathbb H_n})\in\{\infty, 1\}$,
then $\varphi(t_{\mathbb H_n})=\varphi(t_{\mathbb H_n}^2)$
and so $\varphi(t_{\mathbb H_n})=\varphi(t_{\mathbb H_n}+t_{\mathbb H_n}^2)$.
If $\varphi(t_{\mathbb H_n})=a$ then $\varphi(x_ux_vx_w)=a$ for all hyperedges $\{u,v,w\}\in E$,
since $a$ is a minimal element in $(S, +)$.
As the only multiplicative decomposition of $a$ of length $3$
are of the form $1\cdot 1\cdot a=a\cdot 1\cdot 1=1 \cdot a\cdot 1$,
it follows that $|\varphi^{-1}(a)\cap e|=1$ for all $e\in E$
and so $\mathbb H_n$ is $2$-colourable, which is a contradiction. So $\varphi(t_{\mathbb H_n})\neq a$.
If $\varphi(t_{\mathbb H_n})=b$,
then $S\in\{S_{(4,94)},S_{(4,117)}\}$ and $\varphi(x_u)\varphi(x_v)\varphi(x_w)=b$ for some $\{u,v,w\}\in E$.
This implies that $(\varphi(x_u),\varphi(x_v),\varphi(x_w))\in\{(b,1,1),(1,1,b)\}$.
Since $x_vx_ux_w$ and $x_ux_wx_v$ are also summands of $t_{\mathbb H_n}$ and since $1\cdot b\cdot 1=\infty$,
we deduce that $\varphi(t_{\mathbb H_n})=\infty$, a contradiction.
So $\varphi(t_{\mathbb H_n})\neq b$.
Thus $S$ satisfies the identity \eqref{id112500}, with choice \eqref{id112501} for~$\bw_n$,
and so  $S$ is nonfinitely based by Theorem~\ref{thm24112501}.
\end{proof}

\begin{cor}\label{cor242212525}
The $4$-element ai-semiring $S_{(4,123)}$ is nonfinitely based.
\end{cor}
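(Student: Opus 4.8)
The plan is to derive this from Theorem~\ref{thm24112501} along the same lines as the preceding corollary, but with a different choice of non-hyperedge term. First I would record the structural input: the subset $\{\infty,a,1\}$ of $S_{(4,123)}$ is a subalgebra isomorphic to $S_7$, so $S_7$ embeds into $S_{(4,123)}$ and hence $\mathsf{V}(S_{(4,123)})$ contains $\mathsf{V}(S_7)$, which in turn contains $S_c(abc)$ by \cite[Proposition 2.6]{jrz}. Thus the containment hypothesis of Theorem~\ref{thm24112501} is satisfied, and it remains only to exhibit a non-hyperedge term $\bw_n$ for which $S_{(4,123)}$ satisfies the identity \eqref{id112500} for every $n\ge 2$.

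The reason $S_{(4,123)}$ is treated separately from the five semirings of the previous corollary is that the choice $\bw_n:=\bt_{\mathbb{H}_n}^2$ of \eqref{id112501} no longer works. In that proof the value $\varphi(\bt_{\mathbb{H}_n})=b$ was excluded by observing that some reordering of a hyperedge product evaluates to $\infty$; since $S_{(4,123)}$ is commutative, all six orderings of a hyperedge product coincide and this exclusion is unavailable, while $b$ does not satisfy $b\approx b+b^2$. I would therefore take instead $\bw_n:=\bq_{_{\mathbb{H}_n}}$, the product of all the variables, as in \eqref{id112503} and Corollary~\ref{cor24112501}.

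To verify $S_{(4,123)}\models\bt_{\mathbb{H}_n}\approx\bt_{\mathbb{H}_n}+\bq_{_{\mathbb{H}_n}}$ I would fix an arbitrary substitution $\varphi$ and argue by cases on $t:=\varphi(\bt_{\mathbb{H}_n})$, rewriting the requirement as $\varphi(\bq_{_{\mathbb{H}_n}})\le t$ in the additive (semilattice) order, in which $1\le b\le\infty$ and $a$ is incomparable to $b$. The case $t=\infty$ is immediate since $\infty$ is the top. The case $t=1$ forces every hyperedge product, and hence (as $\mathbb{H}_n$ has no isolated vertices) every variable, to take the value $1$, so $\varphi(\bq_{_{\mathbb{H}_n}})=1$. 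The case $t=a$ cannot occur: minimality of $a$ forces each hyperedge product to equal $a$, and the only length-$3$ products equal to $a$ use a single $a$ and two $1$'s, yielding a $2$-in-$3$ colouring and contradicting the fact that $\mathbb{H}_n$ is not $2$-colourable (equivalently $\delta(\bt_{\mathbb{H}_n})=\varnothing$, by Proposition~\ref{pro24112701}).

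The main obstacle is the remaining case $t=b$, where I must show $\varphi(\bq_{_{\mathbb{H}_n}})\in\{1,b\}$. Here I would first note that no variable maps to $\infty$ (otherwise some hyperedge product, and hence $t$, would be $\infty$), so every variable lies in $\{a,1,b\}$ and every hyperedge product lies in $\{1,b\}$; the difficulty is to show that the single long product of all the variables cannot leave $\{1,b\}$, and I expect this to follow by tracking how the values $a$ and $b$ may be distributed over the vertices subject to every hyperedge product staying in $\{1,b\}$, again invoking the large girth and non-$2$-colourability of $\mathbb{H}_n$ to rule out the offending patterns. A cleaner alternative that would bypass this case analysis entirely is to verify directly that $\mathsf{V}(S_{(4,123)})\subseteq\mathsf{V}(S_7^0)$: since $c(\bt_{\mathbb{H}_n})=c(\bq_{_{\mathbb{H}_n}})$ and $\delta(\bt_{\mathbb{H}_n})=\varnothing$, Lemma~\ref{lemma24221511} then yields the identity \eqref{id112500} in $S_7^0$ and hence in $S_{(4,123)}$, placing $\mathsf{V}(S_{(4,123)})$ in the interval $[\mathsf{V}(S_c(abc)),\mathsf{V}(S_7^0)]$ so that Corollary~\ref{cor24112501} finishes the argument.
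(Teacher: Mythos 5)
Your overall skeleton is the same as the paper's: embed $S_7$ into $S_{(4,123)}$ to get $S_c(abc)\in\mathsf{V}(S_{(4,123)})$, then verify identity \eqref{id112500} by cases on $t:=\varphi(\bt_{\mathbb{H}_n})$ and invoke Theorem~\ref{thm24112501}. Your treatment of the cases $t=\infty$, $t=1$ and $t=a$ matches the paper and is fine. The divergence, and the gap, is in your choice of witness term. The paper takes $\bw_n:=x_v$ for a single vertex $v$ (choice \eqref{id112502}), so that in the remaining case $t=b$ one only has to check that the single value $\varphi(x_v)$ lies in $\{1,b\}$; this drops out immediately once one knows every hyperedge product lies in $\{1,b\}$, and then $b+\varphi(x_v)=b$. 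You instead take $\bw_n:=\bq_{_{\mathbb{H}_n}}$, which forces you to control the product of \emph{all} $|V_n|$ variables. That is a genuinely harder quantity: even granting that every variable evaluates into $\{1,b\}$, a long product of such values can easily fall to $\infty$ (already $b\cdot b$ and the $b$-against-$1$ products are problematic in $S_{(4,123)}$), and $b+\infty=\infty\neq b$, so the identity would then \emph{fail}. You explicitly flag this as ``the main obstacle'' and say only that you ``expect'' it to follow by tracking the distribution of $a$'s and $b$'s using large girth and non-$2$-colourability. No argument is given, and those are not the relevant tools here: girth and non-$2$-colourability govern the exclusion of $t=a$, not the behaviour of the value $b$. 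The only way to rescue your choice of $\bw_n$ would be to show that the case $t=b$ cannot actually arise (by examining which triples of values can yield every reordered hyperedge product in $\{1,b\}$, the same style of argument used for $S_{(4,94)}$ and $S_{(4,117)}$ in the preceding corollary), and you have not done this --- indeed you dismiss that style of argument as unavailable.

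Your proposed ``cleaner alternative'' does not close the gap either. To apply Corollary~\ref{cor24112501} you would need $\mathsf{V}(S_{(4,123)})\subseteq\mathsf{V}(S_7^0)$, which is a substantive structural claim requiring its own proof: Lemma~\ref{lemma24221511} only tells you which identities $S_7^0$ satisfies, not that $S_{(4,123)}$ lies in its variety. The paper nowhere asserts this containment; its later analysis identifies $\mathsf{V}(S_{(4,123)})$ with $\mathsf{V}(\{S_7,S_{53}\})$, and no relationship between $S_{53}$ and $S_7^0$ is established, so you cannot take this inclusion for granted. The fix is simple: replace $\bq_{_{\mathbb{H}_n}}$ by the single variable $x_v$ as in \eqref{id112502}, after which your own case analysis goes through essentially verbatim.
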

\begin{proof}
It is easy to see that $S_7$ can be embedded into $S_{(4,123)}$
and so $\mathsf{V}(S_{(4,123)})$ contains $S_c(abc)$.
Let $n\geq2$, and let $\varphi: X \to S_{(4,123)}$ be an arbitrary substitution.
If $\varphi(t_{\mathbb{H}_n})=\infty$, then $\varphi(t_{\mathbb{H}_n}+x_{v})=\infty$,
since~$\infty$ is the maximum element in $(S_{(4,123)}, +)$.
If $\varphi(t_{\mathbb{H}_n})=1$, then it is easy to see that $\varphi(x_v)=1$ for all $v\in V$.
This implies that $\varphi(x_{v})=1$ and so $\varphi(t_{\mathbb{H}_n}+x_{v})=1$.
If $\varphi(t_{\mathbb{H}_n})=a$, then $\varphi(x_{v_1})\varphi(x_{v_2})\varphi(x_{v_3})=a$ for all $\{v_1, v_2, v_3\}\in E$.
Since $a$ can only be expressed as the product of one $a$ and two $1$,
one can deduce that $\mathbb{H}_n$ is $2$-colourable, a contradiction.
So $\varphi(t_{\mathbb{H}_n}) \neq a$.
If $\varphi(t_{\mathbb{H}_n})=b$, then
$\varphi(x_{v_1})\varphi(x_{v_2})\varphi(x_{v_3})=b$ or $1$ for all $\{v_1, v_2, v_3\}\in E$.
This implies that $\varphi(x_{v})=b$ or $1$.
So $\varphi(t_{\mathbb{H}_n}+x_{v})=b$.
This shows that $S_{(4,123)}$ satisfies the identity \eqref{id112500} with choice \eqref{id112502} for~$\bw_n$.
By Theorem~\ref{thm24112501} we conclude that $S_{(4,123)}$ is nonfinitely based.
\end{proof}

\begin{cor}
The $4$-element ai-semiring $S_{(4,359)}$ is nonfinitely based.
\end{cor}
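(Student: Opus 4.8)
The plan is to apply Theorem~\ref{thm24112501} with the choice \eqref{id112502}, taking $\bw_n := x_v$ for a fixed vertex $v$ of $\mathbb{H}_n$. Two things must be checked: that $\mathsf{V}(S_{(4,359)})$ contains $S_c(abc)$, and that $S_{(4,359)}$ satisfies \eqref{id112500} for this choice of $\bw_n$. For the first point, note a genuine difference from the earlier four-element examples: $S_7$ does \emph{not} embed into $S_{(4,359)}$, since the only candidate carrier $\{\infty,a,1\}$ (the unique $S_7$-shaped subsemilattice of the additive reduct, with $a,1$ incomparable and $a+1=\infty$) fails to be closed under multiplication. Instead I would realise $S_7$ as a homomorphic image: the partition collapsing $\infty$ and $b$ into one class while keeping $a$ and $1$ as singletons is a congruence $\theta$ of $S_{(4,359)}$, and a direct check of the induced tables shows $S_{(4,359)}/\theta\cong S_7$. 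Hence $S_7\in\mathsf{V}(S_{(4,359)})$, and since $\mathsf{V}(S_7)$ contains $S_c(abc)$ by \cite[Proposition 2.6]{jrz}, so does $\mathsf{V}(S_{(4,359)})$.

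For the identity it suffices to show that for every substitution $\varphi\colon X\to S_{(4,359)}$ one has $\varphi(x_v)\le\varphi(\bt_{\mathbb{H}_n})$ in the additive order, so that $\varphi(\bt_{\mathbb{H}_n}+x_v)=\varphi(\bt_{\mathbb{H}_n})$. I would argue by cases on the value $t:=\varphi(\bt_{\mathbb{H}_n})$, using that $b$ is the additive top of $S_{(4,359)}$ while $a$ and $1$ are the two atoms lying below $\infty$. If $t=b$ the top absorbs and the identity is immediate. If $t=\infty$, recall that $\mathbb{H}_n$ has no isolated vertices, so $v$ lies in some hyperedge $e_0$; were $\varphi(x_v)=b$, then the absorbing element $b$ would make the hyperedge product for $e_0$ equal $b$, forcing $t=b$. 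Thus $\varphi(x_v)\ne b$, whence $\varphi(x_v)\le\infty=t$.

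The two remaining cases exploit minimality of $a$ and $1$: $t=1$ forces every hyperedge product to equal $1$, and $t=a$ forces every hyperedge product to equal $a$. A short inspection of triple products shows that a product of three elements equals $1$ only when all three factors are $1$, and equals $a$ only when exactly one factor is $a$ and the other two are $1$ (as in the argument for $S_{(4,123)}$). In the first case every vertex, lying in some hyperedge, maps to $1$, so $\varphi(x_v)=1=t$ and the identity holds. In the second case $|\varphi^{-1}(a)\cap e|=1$ for every hyperedge $e$; colouring $\varphi^{-1}(a)$ with $0$ and the rest with $1$ then witnesses that $\mathbb{H}_n$ is $2$-in-$3$ satisfiable, hence $2$-colourable, contradicting the choice of $\mathbb{H}_n$. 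So $t=a$ cannot occur, and in every admissible case $S_{(4,359)}$ satisfies \eqref{id112500} with $\bw_n=x_v$; Theorem~\ref{thm24112501} then yields the conclusion.

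The hard part is the containment step: because $S_7$ is not a subalgebra here, $S_c(abc)\in\mathsf{V}(S_{(4,359)})$ has to be obtained through the quotient map, and I would need to verify carefully that collapsing $\{\infty,b\}$ is indeed a congruence and returns precisely $S_7$. A secondary point requiring care is the exact determination of which triple products take the values $1$ and $a$, on which the $t=1$ and $t=a$ cases rest. Note finally that the choice $\bw_n:=\bt_{\mathbb{H}_n}^2$ from \eqref{id112501}, used for the earlier family, fails for $S_{(4,359)}$ because $\infty^2=b\not\le\infty$; this is exactly why the single-variable term $x_v$ is the appropriate choice here.
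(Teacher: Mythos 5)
Your proof is correct, but it takes a genuinely different route from the paper. The paper does not re-run the hypergraph argument for $S_{(4,359)}$ at all: it observes that both $S_{(4,123)}$ and $S_{(4,359)}$ are (up to isomorphism) subdirect products of $S_7$ and a $3$-element ai-semiring $S_{53}$, concludes that $\mathsf{V}(S_{(4,359)})=\mathsf{V}(S_{(4,123)})$, and then simply quotes Corollary~\ref{cor242212525}. You instead apply Theorem~\ref{thm24112501} directly with $\bw_n:=x_v$, and your two key verifications check out: collapsing $\{\infty,b\}$ is indeed a congruence with quotient $S_7$ (this is exactly the projection onto the $S_7$ factor implicit in the paper's subdirect decomposition, and it is genuinely needed since, as you note, $\{\infty,a,1\}$ is not closed under multiplication because $\infty\cdot\infty=a\cdot a=1\cdot\infty=b$); and your determination of the triple products equal to $1$ (all factors $1$) and to $a$ (exactly one factor $a$, the rest $1$) is right, so the four cases $t\in\{b,\infty,1,a\}$ go through as you describe, with $t=a$ excluded by non-$2$-colourability of $\mathbb{H}_n$. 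Your closing observation that the choice $\bw_n:=\bt_{\mathbb{H}_n}^2$ is unusable here because $\infty^2=b$ is a sensible sanity check. What each approach buys: the paper's argument is shorter and exposes the structural fact $\mathsf{V}(S_{(4,359)})=\mathsf{V}(S_{(4,123)})$, which is of independent interest; yours is self-contained, avoids the ``routine'' verification of the subdirect decomposition, and illustrates that Theorem~\ref{thm24112501} applies even when $S_7$ arises only as a homomorphic image rather than a subalgebra.
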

\begin{proof}
Following the notation in \cite{zrc},
let $S_{53}$ denote a $3$-element ai-semiring
whose Cayley tables for addition and multiplication given by Table \ref{tb24011601}.
It is a routine matter to verify that
both $S_{(4,123)}$ and $S_{(4,359)}$ are isomorphic to subdirect products of $S_7$ and $S_{53}$.
So ${\mathsf V}(S_{(4,359)})={\mathsf V}(S_{(4,123)})$.
By Corollary \ref{cor242212525} we deduce that $S_{(4,359)}$ is nonfinitely based.
\end{proof}

\begin{table}[ht]
\caption{The Cayley tables of $S_{53}$} \label{tb24011601}
\begin{tabular}{c|ccc}
$+$      &$\infty$&$a$&$1$\\
\hline
$\infty$ &$\infty$&$\infty$&$\infty$\\
$a$      &$\infty$&$a$&$a$\\
$1$      &$\infty$&$a$&$1$\\
\end{tabular}\qquad
\begin{tabular}{c|ccc}
$\cdot$  &$\infty$&$a$&$1$\\
\hline
$\infty$ &$\infty$&$\infty$&$\infty$\\
$a$      &$\infty$&$\infty$&$a$\\
$1$      &$\infty$&$a$&$1$\\
\end{tabular}
\end{table}

\begin{remark}
By using the same approach of Corollary \ref{cor24112501}, one can show that
every variety in the interval $[\mathsf{V}(S_c(abc)), {\mathsf V}(S_{(4, k)})]$
is nonfinitely based, where $k=84, 94, 117, 123, 173, 282, 359$.
\end{remark}

\section{The finite basis problem for subvarieties of $\mathsf{V}(S_7)$}
In this section we answer the finite basis problem for all subvarieties of the variety $\mathsf{V}(S_7)$.
In the course of this exploration, we also derive some basic results on the full subvariety lattice of $\mathsf{V}(S_7)$,
though the main results concerning this lattice will be given in Section~\ref{sec:blockhypergraph}.

By Corollary \ref{cor24112501} we know that each subvariety of $\mathsf{V}(S_7)$ that contains $S_c(abc)$
is nonfinitely based.
So it is enough to describe the subvarieties $\mathsf{V}(S_7)$ that do not contain $S_c(abc)$.
The following lemma, which is about the basic properties of $S_7$, will be used without explicit reference.

\begin{lem}\label{s7prop}
\hspace*{\fill}
\begin{itemize}
\item [$(a)$] $S_7$ satisfies the following identities\up:
\begin{align}
x^3 & \approx x^2; \label{id1}\\
xy  & \approx yx;    \label{id2}\\
x+xy& \approx xy^2;\label{id3}\\
x+y^2& \approx x^2y^2.   \label{id4}
\end{align}

\item [$(b)$] If $S$ is a member of $\mathsf{V}(S_7)$ and
$E(S)$ denotes the set of all multiplicative idempotents of $S$, then $E(S)$ is equal to $\{a^2 \mid a\in S\}$
and forms a subalgebra of $S$.

\item [$(c)$] A member of $\mathsf{V}(S_7)$ is subdirectly irreducible
if and only if it is a flat semiring that has a least nonzero multiplicative ideal.

\item [$(d)$] $\mathsf{V}(S_7)$ contains $S_c(a_1\cdots a_k)$ for all $k\geq 1$.
\end{itemize}
\end{lem}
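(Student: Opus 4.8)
The four parts are essentially independent, and I would treat (a) and (b) as warm-ups before spending the real effort on (c) and (d), leaning on the syntactic description of $\Th(S_7)$ from Lemma~\ref{lemma24221510} wherever possible. For (a), commutativity \eqref{id2} and $x^3\approx x^2$ \eqref{id1} read off the multiplication table directly, since $a$ is the only element with $x^2\ne x$ and $a^2=a^3=\infty$. For \eqref{id3} and \eqref{id4} I would invoke Lemma~\ref{lemma24221510}: it suffices to compute content and $\delta$ on each side, finding $c(x+xy)=c(xy^2)=\{x,y\}$ with $\delta(x+xy)=\delta(xy^2)=\{\{x\}\}$, and $c(x+y^2)=c(x^2y^2)=\{x,y\}$ with both $\delta$'s empty. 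For (b), the inclusion $\{a^2\mid a\in S\}\subseteq E(S)$ is immediate from \eqref{id1} since $(a^2)^2=a^4=a^2$, and the reverse inclusion is trivial as an idempotent $e$ equals $e^2$. Closure of $E(S)$ under multiplication follows from commutativity, $(ef)^2=e^2f^2=ef$, while closure under $+$ is the only delicate point: expanding $(e+f)^2=e^2+ef+fe+f^2=e+f+ef$ and comparing with the $S_7$-identity $x^2+y^2\approx x^2+y^2+x^2y^2$ (valid in $\mathsf{V}(S_7)$ by Lemma~\ref{lemma24221510}, as both sides have empty $\delta$), the substitution $x\mapsto e$, $y\mapsto f$ gives $e+f=e+f+ef=(e+f)^2$.

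Part (c) is where structure theory enters. Since $S_7$ is itself a flat semiring, $\mathsf{V}(S_7)\subseteq\F$, and because subdirect irreducibility is an absolute property, Lemma~\ref{lem24121301} forces every subdirectly irreducible $S\in\mathsf{V}(S_7)$ to be a flat semiring, disposing of the flatness clause in the forward direction. The heart of the matter is a congruence analysis valid for any flat semiring $S$ with top-and-zero $\infty$. The key observation is that a nontrivial congruence must collapse something to $\infty$: if $\theta$ identifies distinct $s,t$ both different from $\infty$, then $s=s+s\mathrel{\theta}s+t=\infty$. Consequently the $\theta$-class of $\infty$ is a nonzero multiplicative ideal for every nontrivial $\theta$, and conversely, for any multiplicative ideal $I$ the relation collapsing $I$ to a point and fixing everything else is a congruence $\theta_I$ (additive compatibility being automatic, since the sum of two distinct non-top elements is $\infty\in I$). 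I would then show $\Cg(s,\infty)=\theta_{\langle s\rangle}$, where $\langle s\rangle$ is the principal multiplicative ideal generated by $s$, so that the monolith of $S$, if it exists, equals $\bigcap_{s\ne\infty}\theta_{\langle s\rangle}$ and exists precisely when $\bigcap_{s\ne\infty}\langle s\rangle\ne\{\infty\}$; this intersection is nonzero exactly when $S$ possesses a least nonzero multiplicative ideal, yielding both directions.

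For (d) I would again argue equationally, showing $S_c(a_1\cdots a_k)\models\Th(S_7)$ so that membership in $\mathsf{V}(S_7)$ follows from Birkhoff's theorem; a direct embedding into a power of $S_7$ is hopeless, since every homomorphism $S_c(a_1\cdots a_k)\to S_7$ collapses all non-singleton subwords to $\infty$. By Lemma~\ref{lemma24221510} it suffices to prove that $c(\bu)=c(\bv)$ and $\delta(\bu)=\delta(\bv)$ imply $\phi(\bu)=\phi(\bv)$ for every assignment $\phi$ into $S_c(a_1\cdots a_k)$. The main work is an evaluation lemma: identifying the nonzero elements with nonempty subsets of $\{a_1,\dots,a_k\}$, a single word $\bp$ evaluates to a nonzero $T$ precisely when every variable of $\bp$ occurs once, maps to a nonempty subset, and these images partition $T$, while $\phi(\bu)\ne\infty$ forces every word of $\bu$ to evaluate to the same $T$. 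Granting $\phi(\bu)=T\ne\infty$, one checks that each variable of $c(\bu)=c(\bv)$ maps into $T$, and that for each letter $i\in T$ the set $Z_i=\{x:i\in\phi(x)\}$ lies in $\delta(\bu)=\delta(\bv)$; feeding this back through the $\delta(\bv)$ condition shows every word of $\bv$ is multilinear with images partitioning $T$, hence also evaluates to $T$. This last step, tracking how the $\delta$-sets control the letter-by-letter partition of $T$, is the main obstacle of the whole lemma.
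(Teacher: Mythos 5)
Your argument is correct, but it is far more self-contained than the paper's, which disposes of all four parts in four lines: (a) is declared an easy check, (b) is attributed to identities \eqref{id1}, \eqref{id2} and \eqref{id4}, (c) is said to follow immediately from Lemma~\ref{lem24121301}, and (d) is a citation to \cite[Proposition~2.6]{jrz}. Your use of Lemma~\ref{lemma24221510} for \eqref{id3} and \eqref{id4} in part (a) is a legitimate alternative to direct table-checking. In (b) you can shortcut the additive closure by substituting straight into \eqref{id4}: $e+f=e+f^2=e^2f^2=ef$, so closure under $+$ reduces to closure under $\cdot$, with no need to expand $(e+f)^2$ or to introduce the auxiliary identity $x^2+y^2\approx x^2+y^2+x^2y^2$ (which is nonetheless valid in $\mathsf{V}(S_7)$, so your longer route also works). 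For (c), the paper's ``immediately'' conceals precisely the congruence--ideal correspondence you spell out: nontrivial congruences of a flat semiring are determined by their $\infty$-class, which is a nonzero multiplicative ideal, $\Cg(s,\infty)=\theta_{\langle s\rangle}$, and the monolith exists iff $\bigcap_{s\neq\infty}\langle s\rangle\neq\{\infty\}$; this is standard for flat algebras but your version makes the lemma independent of outside references. For (d), your equational verification via the $\delta$-invariant essentially reproves the cited result of \cite{jrz}; the key steps (every variable of $c(\bu)$ lands inside $T$, each $Z_i$ belongs to $\delta(\bu)=\delta(\bv)$, and membership in $\delta(\bv)$ forces each summand of $\bv$ to be multilinear with images partitioning $T$) all check out, with the case $\phi(\bu)=0$ following by symmetry. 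One small inaccuracy: the parenthetical claim that every homomorphism $S_c(a_1\cdots a_k)\to S_7$ kills all non-singleton subwords fails for the constant homomorphism onto $\{1\}$; the correct (and still true) point is that for $k\geq 2$ no family of homomorphisms into $S_7$ separates points, so no subdirect embedding into a power of $S_7$ exists --- but this remark plays no role in your actual argument.
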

\begin{proof}
$(a)$ It is easy to check this result.

$(b)$ This follows from the fact that $S$ satisfies the identities (\ref{id1}), (\ref{id2}) and (\ref{id4}).

$(c)$ It follows from Lemma \ref{lem24121301} immediately.

$(d)$ This follows from \cite[Proposition 2.6]{jrz}.
\end{proof}

We now identify the minimal nontrivial subvarieties of $\mathsf{V}(S_7)$.
\begin{pro}\label{nnp1}
$\mathsf{V}(M_2)$ and $\mathsf{V}(S_c(a))$ are the only minimal nontrivial subvarieties of $\mathsf{V}(S_7)$.
\end{pro}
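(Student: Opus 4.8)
The plan is to reduce everything to a short dichotomy on a single nontrivial subdirectly irreducible algebra, exploiting the structural description in Lemma~\ref{s7prop}. First I would record the two relevant two-element subalgebras of $S_7$: $M_2=\{\infty,1\}$ and $S_c(a)=\{\infty,a\}$ (the set $\{a,1\}$ fails to be a subalgebra since $a+1=\infty$). These are separated by identities: $M_2$ satisfies $x^2\approx x$ while $S_c(a)$ does not, and $S_c(a)$ satisfies $xy\approx x^2$ (indeed \emph{every} product in $S_c(a)$ equals the top $\infty$) while $M_2$ does not. In particular $\mathsf{V}(M_2)\neq\mathsf{V}(S_c(a))$ and neither is contained in the other.

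The heart of the argument is the following claim: \emph{every nontrivial subvariety $\mathcal{W}$ of $\mathsf{V}(S_7)$ contains $M_2$ or $S_c(a)$}. To prove it, pick a nontrivial subdirectly irreducible $S\in\mathcal{W}$ (one exists since a variety is generated by its subdirectly irreducible members). By Lemma~\ref{s7prop}$(c)$ the algebra $S$ is a flat semiring with top $\infty$, so it has a nonzero element $s$; note that for any nonzero $t$ the pair $\{t,\infty\}$ is automatically closed under $+$, because $t+t=t$, $t+\infty=\infty$, and distinct elements of a flat semiring sum to $\infty$. Now split on the multiplicative idempotents, which by Lemma~\ref{s7prop}$(b)$ form the set $E(S)=\{u^2\mid u\in S\}$. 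If $S$ has a nonzero idempotent $e$, then $\{e,\infty\}$ is a subalgebra with $e^2=e$ and $\infty$ acting as multiplicative zero, hence isomorphic to $M_2$. Otherwise the only multiplicative idempotent is $\infty$; since $E(S)=\{u^2\mid u\in S\}$, every square (in particular $s^2$) equals $\infty$, so $\{s,\infty\}$ is a subalgebra with null multiplication, isomorphic to $S_c(a)$. In either case the claim holds.

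From the claim the proposition follows quickly. Any minimal nontrivial subvariety must, by the claim, lie above $\mathsf{V}(M_2)$ or above $\mathsf{V}(S_c(a))$, so the minimal ones are among these two. To see that each is genuinely minimal, apply the claim to an arbitrary nontrivial subvariety $\mathcal{U}$ of $\mathsf{V}(M_2)$: since $\mathcal{U}$ satisfies $x^2\approx x$ it cannot contain $S_c(a)$, so the claim forces $M_2\in\mathcal{U}$ and hence $\mathcal{U}=\mathsf{V}(M_2)$; symmetrically, a nontrivial subvariety of $\mathsf{V}(S_c(a))$ satisfies $xy\approx x^2$, cannot contain $M_2$, and therefore equals $\mathsf{V}(S_c(a))$. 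Combined with the distinctness already noted, this identifies $\mathsf{V}(M_2)$ and $\mathsf{V}(S_c(a))$ as exactly the minimal nontrivial subvarieties.

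I do not expect a serious obstacle here; the only points requiring care are the bookkeeping that each extracted two-element subset is genuinely closed under both operations (handled by flatness together with the idempotent versus non-idempotent dichotomy) and the routine verification that the separating identities $x^2\approx x$ and $xy\approx x^2$ behave on $M_2$ and $S_c(a)$ as asserted.
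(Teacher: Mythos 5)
Your proof is correct, but it takes a genuinely different route from the paper's. The paper disposes of Proposition~\ref{nnp1} by citing the known classification of the six atoms of the lattice of all ai-semiring varieties (Polin; Shao--Ren), observing that $M_2$ and $S_c(a)$ embed into $S_7$, and ruling out the other four atoms because they fail identity~\eqref{id4}. You instead argue internally: every nontrivial subvariety of $\mathsf{V}(S_7)$ has a nontrivial subdirectly irreducible member, which by Lemma~\ref{s7prop}$(c)$ is a flat semiring, and the dichotomy on whether it has a nonzero multiplicative idempotent (using Lemma~\ref{s7prop}$(b)$, so that either some $\{e,\infty\}$ with $e^2=e$ gives a copy of $M_2$, or all squares are $\infty$ and $\{s,\infty\}$ gives a copy of $S_c(a)$) shows the subvariety contains $M_2$ or $S_c(a)$. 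All the closure and isomorphism checks you flag do go through, and the separating identities $x^2\approx x$ and $xy\approx x^2$ behave as you claim, so the minimality of each of the two varieties also follows. What your approach buys is self-containedness --- it avoids invoking the external atomicity result and directly establishes the slightly stronger fact that the subvariety lattice of $\mathsf{V}(S_7)$ is atomic with exactly these two atoms; indeed your dichotomy essentially anticipates Propositions~\ref{pro24100301} and~\ref{pro24100302}, which the paper proves separately afterwards. What the paper's approach buys is brevity and a placement of the result within the known global picture of minimal ai-semiring varieties.
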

\begin{proof}
By \cite[Theorem 1.1]{polin} and \cite{sr} we know that
$\mathsf{V}(L_2)$, $\mathsf{V}(R_2)$, $\mathsf{V}(M_2)$, $\mathsf{V}(D_2)$, $\mathsf{V}(N_2)$ and $\mathsf{V}(S_c(a))$
is a complete list of the minimal nontrivial subvarieties of the variety of all ai-semirings.
It is easy to see that both $M_2$ and $S_c(a)$ can be embedded into $S_7$.
On the other hand, none of $L_2$, $R_2$, $D_2$ and $N_2$ satisfy the identity (\ref{id4}).
So $\mathsf{V}(M_2)$ and $\mathsf{V}(S_c(a))$ are the only minimal nontrivial subvarieties of $\mathsf{V}(S_7)$.
\end{proof}

\begin{pro}\label{pro24100301}
Let $\mathcal{V}$ be a subvariety of $\mathsf{V}(S_7)$.
Then $\mathcal{V}$ does not contain $S_c(a)$
if and only if $\mathcal{V}$ satisfies the identity
\begin{equation}\label{id24101701}
x^2\approx x.
\end{equation}
\end{pro}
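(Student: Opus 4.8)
The plan is to prove the two implications separately, with the forward implication being routine and the reverse being the substantive one. For the ``if'' direction I would argue contrapositively. The semiring $S_c(a)$ has underlying set $\{a,0\}$ with $a\cdot a=0$, because $a^2$ is not a (nonempty) subword of the one-letter word $a$; thus $S_c(a)$ fails the identity \eqref{id24101701}. Consequently, if $\mathcal{V}$ satisfies \eqref{id24101701}, then $\mathcal{V}$ cannot contain $S_c(a)$.

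For the converse I would again argue contrapositively: assuming $\mathcal{V}$ does \emph{not} satisfy \eqref{id24101701}, I would produce a copy of $S_c(a)$ inside $\mathcal{V}$. The first step is to reduce to subdirectly irreducibles. Since an identity that fails in a variety already fails in some subdirectly irreducible member, there is a subdirectly irreducible $S\in\mathcal{V}$ and an element $s\in S$ with $s^2\neq s$. By part $(c)$ of Lemma~\ref{s7prop}, $S$ is a flat semiring, so it has a top element $\infty$ (the additive top and multiplicative zero), and any two distinct elements sum to $\infty$.

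The second step is a short structural analysis of the subalgebra generated by $s$. Using $x^3\approx x^2$ \eqref{id1} and commutativity \eqref{id2}, the only multiplicative powers of $s$ are $s$ and $s^2$, so $\langle s\rangle$ is contained in $\{s,\,s^2,\,s+s^2\}$, and the flat law forces $s+s^2=\infty$ since $s\neq s^2$. I then split into two cases. If $s^2=\infty$, then $\{s,\infty\}$ is a subalgebra, and the map $s\mapsto a$, $\infty\mapsto 0$ is an isomorphism onto $S_c(a)$. If $s^2\neq\infty$, then $\langle s\rangle=\{s,s^2,\infty\}$, and I would verify that the partition $\{\{s\},\{s^2,\infty\}\}$ is a congruence: the only nontrivial checks are the products and sums involving $s^2$ and $\infty$, and each of $s\cdot s^2,\ s^2\cdot s^2,\ s\cdot\infty,\ s^2\cdot\infty,\ s+s^2,\ s+\infty$ lands in the block $\{s^2,\infty\}$. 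The two-element quotient is then isomorphic to $S_c(a)$ (with $[s]\mapsto a$, $[s^2]=[\infty]\mapsto 0$, so $[s]^2=[s^2]=0$). In either case $S_c(a)\in\mathcal{V}$ by closure under subalgebras and homomorphic images, completing the contrapositive.

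The main obstacle is the reverse direction, and specifically the need to pass to a subdirectly irreducible (hence flat) member before anything can be said: without the flat structure the sum $s+s^2$ is uncontrolled, whereas flatness pins it to $\infty$ and thereby constrains $\langle s\rangle$ to at most three elements. The remaining work is the recognition of $S_c(a)$ either directly as $\{s,\infty\}$ or, when $s^2$ is a proper non-top idempotent, as the quotient collapsing $s^2$ with $\infty$; the congruence verification in the latter case is the only computation, and it is brief.
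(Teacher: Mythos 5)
Your proof is correct, but it takes a genuinely different and longer route than the paper's. The paper handles the converse with no reduction to subdirectly irreducibles: given any $S\in\mathcal{V}$ with $s^2\neq s$, the identities $x^3\approx x^2$ and $x+y^2\approx x^2y^2$ (both inherited from $S_7$) give $s+s^2=s^2s^2=s^4=s^2$, so $\{s,s^2\}$ is already a two-element subalgebra in which $s^2$ is the additive top and multiplicative zero, i.e.\ a copy of $S_c(a)$. Your route instead passes to a subdirectly irreducible (hence flat) witness and uses flatness to pin down $s+s^2=\infty$; this is valid, and your congruence check in the case $s^2\neq\infty$ is correct, but that case is in fact vacuous: your flatness computation $s+s^2=\infty$ combined with the identity \eqref{id4} (which forces $s+s^2=s^2$) yields $s^2=\infty$ whenever $s^2\neq s$ in a flat member of $\mathsf{V}(S_7)$, so the quotient step is never needed. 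What the paper's argument buys is brevity and uniformity (no case split, no appeal to the classification of subdirectly irreducibles); what yours buys is that the substantive direction uses only $x^3\approx x^2$ together with the flatness of subdirectly irreducibles, and not the identity \eqref{id4}.
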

\begin{proof}
Suppose that $\mathcal{V}$ satisfies~\eqref{id24101701}.
It is easy to see that $S_c(a)$ does not satisfy~\eqref{id24101701}
and so $S_c(a)$ is not contained in $\mathcal{V}$.
Conversely, assume that $\mathcal{V}$ does not satisfy~\eqref{id24101701}.
Then there exists a semiring $S$ in $\mathcal{V}$ such that $a^2\neq a$ for some $a\in S$.
Since the identities~\eqref{id1} and~\eqref{id4} are satisfied by $S_7$,
it follows that $\{a^2, a\}$ is isomorphic to $S_c(a)$.
Thus $\mathcal{V}$ contains $S_c(a)$ as required.
\end{proof}

\begin{pro}\label{pro24100302}
Let $\mathcal{V}$ be a subvariety of $\mathsf{V}(S_7)$.
Then $\mathcal{V}$ does not contain $M_2$
if and only if $\mathcal{V}$ satisfies the identity
\begin{equation}\label{id24101702}
x^2y\approx x^2.
\end{equation}
\end{pro}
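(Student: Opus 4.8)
The plan is to mirror the proof of Proposition~\ref{pro24100301}, exhibiting a copy of $M_2$ as a two-element subalgebra of a suitable member of $\mathcal{V}$ whenever the identity \eqref{id24101702} fails. Since $\mathcal{V} \subseteq \mathsf{V}(S_7)$, every algebra in $\mathcal{V}$ satisfies the identities \eqref{id1}--\eqref{id4}, and in particular the power-collapsing consequence $x^n \approx x^2$ ($n \ge 2$) of \eqref{id1}, which I will use freely.

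The forward direction is routine. Evaluating the two sides of \eqref{id24101702} in $M_2$ at the multiplicative identity and the absorbing element gives $x^2y = 0 \ne 1 = x^2$, so $M_2$ itself violates \eqref{id24101702}; hence any variety satisfying \eqref{id24101702} cannot contain $M_2$.

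For the converse, suppose $\mathcal{V}$ does not satisfy \eqref{id24101702}. Then some $S \in \mathcal{V}$ carries elements $a, b$ with $a^2 b \ne a^2$. I would set $e := a^2$ and $d := a^2 b^2$ and claim that $\{e, d\}$ is a subalgebra of $S$ isomorphic to $M_2$ via $e \mapsto 1$ (the multiplicative identity) and $d \mapsto 0$ (the absorbing top). Multiplicative closure is immediate from \eqref{id1} and \eqref{id2}: one checks $e^2 = e$, $d^2 = d$, and $ed = a^4 b^2 = d$, so $e$ acts as an identity and $d$ as a multiplicative zero on the pair. For additive closure the key computation is $e + d = d$, obtained by substituting $x = a^2$, $y = b^2$ into \eqref{id3} to get $a^2 + a^2 b^2 = a^2 b^4 = a^2 b^2$; together with the idempotency of $+$ this makes $d$ the additive top. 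Finally $e \ne d$: were $a^2 = a^2 b^2$, multiplying by $b$ and using $b^3 = b^2$ would give $a^2 b = a^2$, contradicting the choice of $a, b$. Hence $\{e, d\} \cong M_2$, so $M_2 \in \mathsf{V}(S) \subseteq \mathcal{V}$.

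The main obstacle is choosing the right witness $d = a^2 b^2$ in place of the more naive $a^2 b$ (which need not be idempotent, and so need not generate a flat two-element subalgebra), and then confirming the two delicate facts that make the argument go through: the additive collapse $a^2 + a^2 b^2 = a^2 b^2$ supplied by \eqref{id3}, and the separation $a^2 \ne a^2 b^2$, which is precisely where the hypothesis $a^2 b \ne a^2$ is genuinely used. All remaining verifications are mechanical applications of commutativity and power-collapsing.
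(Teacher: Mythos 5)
Your proposal is correct and follows essentially the same route as the paper: the paper's proof also takes the pair $\{a^2, a^2b^2\}$ and verifies via identities \eqref{id1}, \eqref{id2}, \eqref{id3} that it forms a copy of $M_2$, with the forward direction handled identically. You have simply written out the verifications that the paper leaves as "easy to verify", and they all check out.
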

\begin{proof}
Suppose that $\mathcal{V}$ satisfies the identity (\ref{id24101702}). Since this identity
does not hold in $M_2$, it follows immediately that $\mathcal{V}$ does not contain $M_2$.
Conversely, assume that~$\mathcal{V}$ does not satisfy the identity (\ref{id24101702}).
Then there exists $S$ in $\mathcal{V}$ such that $a^2b\neq a^2$ for some $a, b \in S$.
By using the identities (\ref{id1}), (\ref{id2}) and (\ref{id3}),
it is easy to verify that $\{a^2, a^2b^2\}$ is isomorphic to $M_2$.
Hence $\mathcal{V}$ contains $M_2$.
\end{proof}

Let $\mathsf{V}(M_2, S_c(a))$ denote the variety generated by $M_2$ and $S_c(a)$.
From \cite[Theorem 3.8]{sr}
we know that the lattice $\mathcal{L}(\mathsf{V}(M_2, S_c(a)))$ of subvarieties of $\mathsf{V}(M_2, S_c(a))$
contains exactly $4$ varieties: $\mathsf{V}(M_2, S_c(a))$, $\mathsf{V}(M_2)$, $\mathsf{V}(S_c(a))$
and the trivial variety ${\bf T}$, which are all finitely based (see \cite{sr}).
To study the lattice $\mathcal{L}(\mathsf{V}(S_7))$ of subvarieties of $\mathsf{V}(S_7)$,
it is natural to consider the mapping
\[
\varphi: \mathcal{L}(\mathsf{V}(S_7)) \to \mathcal{L}(\mathsf{V}(M_2, S_c(a))),
~\mathcal{V} \mapsto \mathcal{V} \cap \mathsf{V}(M_2, S_c(a)).
\]
Then $\varphi$ is surjective and so $\mathcal{L}(\mathsf{V}(S_7))$ is the disjoint union of
$\varphi^{-1}({\bf T})$, $\varphi^{-1}(\mathsf{V}(M_2))$, $\varphi^{-1}(\mathsf{V}(S_c(a)))$
and $\varphi^{-1}(\mathsf{V}(M_2, S_c(a)))$.

\begin{pro}\label{pro24101840}
\hspace*{\fill}
\begin{itemize}
\item[$(1)$] $\varphi^{-1}({\bf T})=\{{\bf T}\}$.

\item[$(2)$] $\varphi^{-1}(\mathsf{V}(M_2))=\{\mathsf{V}(M_2)\}$.

\item[$(3)$] $\varphi^{-1}(\mathsf{V}(S_c(a)))=[\mathsf{V}(S_c(a)), \mathbf{N}]$,
where $\mathbf{N}$ denotes the subvariety of $\mathsf{V}(S_7)$
determined by the identity $(\ref{id24101702})$.

\item[$(4)$] $\varphi^{-1}(\mathsf{V}(M_2, S_c(a)))=[\mathsf{V}(M_2, S_c(a)), \mathsf{V}(S_7)]$.
\end{itemize}
\end{pro}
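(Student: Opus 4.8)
The plan is to exploit the fact, recorded in Proposition~\ref{nnp1}, that $\mathsf{V}(M_2)$ and $\mathsf{V}(S_c(a))$ are the only atoms of $\mathcal{L}(\mathsf{V}(S_7))$, together with the translation of atom-containment into identities supplied by Propositions~\ref{pro24100301} and~\ref{pro24100302}. Since both atoms lie below $\mathsf{V}(M_2, S_c(a))$, for any subvariety $\mathcal{V}$ of $\mathsf{V}(S_7)$ we have $M_2\in\mathcal{V}$ if and only if $M_2\in\varphi(\mathcal{V})$, and likewise for $S_c(a)$; as the four members of $\mathcal{L}(\mathsf{V}(M_2,S_c(a)))$ are distinguished precisely by which of the two atoms they contain, the value $\varphi(\mathcal{V})$ is completely determined by whether $\mathcal{V}$ contains $M_2$ and whether it contains $S_c(a)$. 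By Proposition~\ref{pro24100301} the latter question is equivalent to whether $\mathcal{V}$ fails \eqref{id24101701}, and by Proposition~\ref{pro24100302} the former is equivalent to whether $\mathcal{V}$ fails \eqref{id24101702}.

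The crucial step is to identify the largest subvariety $\mathcal{W}$ of $\mathsf{V}(S_7)$ satisfying \eqref{id24101701}, and I claim $\mathcal{W}=\mathsf{V}(M_2)$. Since $M_2$ satisfies \eqref{id24101701}, one inclusion is clear, and for the reverse I would analyse the subdirectly irreducible members of $\mathcal{W}$ via Lemma~\ref{s7prop}(c): each nontrivial such $S$ is a flat semiring whose multiplicative reduct is commutative (by \eqref{id2}), idempotent (by \eqref{id24101701}) and $0$-cancellative, and which has a least nonzero multiplicative ideal; write $\infty$ for its multiplicative zero. A short cancellation computation shows that distinct nonzero elements annihilate: if $a\neq b$ are nonzero with $ab\neq\infty$, then $a\cdot b=(ab)\cdot b$ (using $b^2=b$), so $0$-cancellativity of $b$ gives $a=ab$, and symmetrically $b=ab$, whence $a=b$, a contradiction. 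Consequently each nonzero $a$ generates the principal ideal $\{a,\infty\}$; were there two distinct nonzero elements, these would be distinct minimal nonzero ideals with only $\{\infty\}$ below both, contradicting the least-ideal hypothesis. Thus $S$ has a unique nonzero (idempotent) element and $S\cong M_2$. Since a variety is generated by its subdirectly irreducible members, $\mathcal{W}=\mathsf{V}(M_2)$.

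With this in hand the four parts follow quickly. Because $\mathsf{V}(M_2)$ is an atom, the only subvarieties of $\mathsf{V}(S_7)$ satisfying \eqref{id24101701} are ${\bf T}$ and $\mathsf{V}(M_2)$. For~(1), a subvariety $\mathcal{V}$ with $\varphi(\mathcal{V})={\bf T}$ contains neither atom, so it satisfies \eqref{id24101701} and hence equals ${\bf T}$ or $\mathsf{V}(M_2)$; excluding $M_2$ rules out $\mathsf{V}(M_2)$, giving $\mathcal{V}={\bf T}$. For~(2), $\varphi(\mathcal{V})=\mathsf{V}(M_2)$ forces $M_2\in\mathcal{V}$ and $S_c(a)\notin\mathcal{V}$, so $\mathcal{V}$ satisfies \eqref{id24101701} and contains $M_2$, whence $\mathcal{V}=\mathsf{V}(M_2)$. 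Parts~(3) and~(4) are purely formal: by Proposition~\ref{pro24100302}, $\varphi(\mathcal{V})=\mathsf{V}(S_c(a))$ holds exactly when $\mathsf{V}(S_c(a))\subseteq\mathcal{V}$ and $\mathcal{V}\subseteq\mathbf{N}$ (noting that $S_c(a)$ satisfies \eqref{id24101702}, so $\mathsf{V}(S_c(a))\subseteq\mathbf{N}$), which is the interval $[\mathsf{V}(S_c(a)),\mathbf{N}]$, while $\varphi(\mathcal{V})=\mathsf{V}(M_2,S_c(a))$ says exactly that $\mathsf{V}(M_2,S_c(a))\subseteq\mathcal{V}\subseteq\mathsf{V}(S_7)$. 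The main obstacle is the subdirectly irreducible computation of the second paragraph; once $\mathcal{W}=\mathsf{V}(M_2)$ is established, the remaining bookkeeping is routine.
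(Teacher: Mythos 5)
Your argument is correct, and parts (1), (3) and (4) proceed essentially as in the paper: (1) is immediate from the atom description in Proposition~\ref{nnp1} (any nontrivial subvariety must contain $M_2$ or $S_c(a)$), and (3), (4) are the same formal bookkeeping via Propositions~\ref{pro24100301} and~\ref{pro24100302}. Where you genuinely diverge is in the key step for (2), namely showing that the largest subvariety of $\mathsf{V}(S_7)$ satisfying $x^2\approx x$ is $\mathsf{V}(M_2)$. The paper does this syntactically in two lines: substituting into the law $x+y^2\approx x^2y^2$ (identity~\eqref{id4}, valid in $S_7$) and using $x^2\approx x$ yields $x+y\approx xy$, which is quoted from the earlier literature as an equational basis for $\mathsf{V}(M_2)$. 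You instead argue structurally, classifying the nontrivial subdirectly irreducible members of that subvariety via Lemma~\ref{s7prop}(c): flatness gives $0$-cancellativity, idempotence plus cancellation forces distinct nonzero elements to annihilate, and the least-nonzero-ideal condition then forces a single nonzero element, i.e.\ $S\cong M_2$. This computation is sound (it is in fact a miniature of the paper's own analysis in Case~1 of Theorem~\ref{thm24101201}) and has the merit of being self-contained, not relying on the external fact that $x+y\approx xy$ axiomatises $\mathsf{V}(M_2)$; the paper's route is shorter but leans on that imported basis result. Either way the conclusion and the remaining deductions are correct.
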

\begin{proof}
$(1)$ This follows from Proposition \ref{nnp1} immediately.

$(2)$
It is easy to see that $\mathsf{V}(M_2)$ is a member of $\varphi^{-1}(\mathsf{V}(M_2))$.
Now let $\mathcal{V}$ be an arbitrary variety in $\varphi^{-1}(\mathsf{V}(M_2))$.
Then $\mathcal{V}$ contains $M_2$, but does not contain $S_c(a)$.
By Proposition \ref{pro24100301} we have that ${\bf V}$ satisfies (\ref{id24101701}).
Since the identity (\ref{id4}) holds in $S_7$,
it follows that ${\bf V}$ satisfies $x+y\approx xy$, which is an equational basis of $\mathsf{V}(M_2)$.
Thus $\mathcal{V}=\mathsf{V}(M_2)$ and so $\varphi^{-1}({\bf M})=\{\mathsf{V}(M_2)\}$.

$(3)$ Let $\mathcal{V}$ be a variety in $[\mathsf{V}(S_c(a)), \mathbf{N}]$.
Then $\mathcal{V}$ contains $S_c(a)$ and satisfies the identity (\ref{id24101702}).
By Proposition \ref{pro24100302} we have that $\mathcal{V}$ does not contain $M_2$
and so $\varphi(\mathcal{V})=\mathsf{V}(S_c(a))$.
Conversely, let $\mathcal{V}$ be a variety in $\varphi^{-1}(\mathsf{V}(S_c(a)))$.
Then $\varphi(\mathcal{V})=\mathsf{V}(S_c(a))$
and so $\mathcal{V}$ contains $S_c(a)$, but does not contain $M_2$.
By Proposition \ref{pro24100302} again we deduce that $\mathcal{V}$ lies in $[\mathsf{V}(S_c(a), \mathbf{N}]$.

$(4)$ This is trivial.
\end{proof}

The following result provides some initial information about subdirectly irreducible members of $\mathsf{V}(S_7)$.
\begin{thm}\label{thm24101201}
Let $S$ be a nontrivial subdirectly irreducible member of $\mathsf{V}(S_7)$
and $I$ denote its least nonzero multiplicative ideal.
Then $I$ contains exactly two elements and $E(S)$ contains at most two elements.
If $|E(S)|=1$, then $S$ is a member of ${\bf N}$.
If $|E(S)|=2$, then $S$ either is isomorphic to $M_2$ or contains a copy of $S_7$.
\end{thm}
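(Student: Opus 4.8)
The plan is to exploit the structural description of subdirectly irreducible members of $\mathsf{V}(S_7)$ from Lemma~\ref{s7prop}(c): such an $S$ is a flat semiring with a least nonzero multiplicative ideal $I$. First I would analyze the ideal $I$. Being a flat semiring, $S$ has $\infty$ as its additive and multiplicative top/zero, and the additive structure is the flat semilattice. Since $I$ is the least nonzero ideal and $S$ is subdirectly irreducible, I expect $I$ to be a minimal object consisting of $\infty$ together with exactly one other element; the claim $|I|=2$ should follow by showing that any nonzero ideal must contain some ``ground'' element and that minimality forces it to be a two-element flat chain. The key computational input is Lemma~\ref{s7prop}(a), especially the identities $x^3\approx x^2$ and $x+y^2\approx x^2y^2$, which control how products and idempotents behave.

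Next I would bound $E(S)$. By Lemma~\ref{s7prop}(b), $E(S)=\{a^2\mid a\in S\}$ is a subalgebra, and since $S$ satisfies $x^2\approx x^3$ the idempotents form the ``flat'' part of $S$ where multiplication is well-behaved. I would argue that $E(S)$, as a subalgebra of the subdirectly irreducible $S$, is constrained: using distributivity and the flat additive structure, the idempotents together with $\infty$ form a multiplicative semilattice, and the least-ideal condition forces at most two elements (namely $\infty$ and one idempotent $e$ generating $I$, or just $\infty$ itself). The dichotomy on $|E(S)|$ then organizes the rest of the argument.

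For the case $|E(S)|=1$: here the only idempotent is $\infty$, so $x^2=\infty$ fails to produce new idempotents, and I would show this forces $x^2y\approx x^2$ to hold (every square is $\infty$-like relative to the ideal), i.e.\ $S$ satisfies identity~\eqref{id24101702}, placing $S$ in $\mathbf{N}$ by the defining identity of $\mathbf{N}$ from Proposition~\ref{pro24101840}(3). For the case $|E(S)|=2$, say $E(S)=\{\infty,e\}$ with $e=e^2\neq\infty$: I would examine how the rest of $S$ sits over this two-element idempotent semilattice. If every element is idempotent then $S=E(S)$ is a two-element algebra, and checking the identities (in particular whether $x^2\approx x$ holds) identifies it as $M_2$. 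Otherwise there is some $a$ with $a^2=e\neq a$; then by Lemma~\ref{s7prop}(a) the identities~\eqref{id1} and~\eqref{id4} make $\{a^2,a\}\cong S_c(a)$ (exactly as in the proof of Proposition~\ref{pro24100301}), and I would leverage the $0$-cancellativity of the flat semiring $S$ to reconstruct a full copy of $S_7$ on $\{\infty,a,e\}$ by verifying the Cayley tables in Table~\ref{tb24111401} match.

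The main obstacle I anticipate is the $|E(S)|=2$ case: pinning down that a non-idempotent element together with the two idempotents must assemble into precisely the $S_7$ multiplication table, rather than some other flat semiring on three elements. The subtlety is that subdirect irreducibility and the least-ideal hypothesis must be used to rule out alternative multiplicative patterns; I would lean on Lemma~\ref{s7prop}(a) together with Lemma~\ref{lemma24221510} (the equational description of $S_7$) to check that the subalgebra generated by a witnessing non-idempotent element satisfies exactly the identities of $S_7$ and no more, hence contains $S_7$ as a subalgebra. The other steps---bounding $|I|$ and handling $|E(S)|=1$---should be comparatively routine, reducing to identity manipulations and the flat-semiring structure already established.
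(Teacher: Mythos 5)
Your high-level decomposition (flatness via Lemma~\ref{s7prop}, then a case split on $|E(S)|$) matches the paper's, but the plan leaves precisely the load-bearing steps as assertions, and the tools you propose for them would not deliver. The paper's key mechanism, which is absent from your sketch, is this: if $e$ is a nonzero idempotent and $a\in I\setminus\{0\}$, then $I\subseteq Se$ gives $a=se$ and hence $ae=se^2=se=a\neq 0$, so by $0$-cancellativity any two nonzero idempotents $e,f$ satisfy $ae=af=a\neq0$ and therefore $e=f$. That single computation is what forces $|E(S)|\le 2$; your appeal to ``the least-ideal condition forces at most two elements'' names the hypothesis but not the argument. Similarly, $|I|=2$ does not follow from $I$ being ``a minimal object'' (minimal ideals of semigroups can be large); the paper takes distinct nonzero $a,b\in I$, uses $S^1a=S^1b=I$ to get $a=cb$, $b=da$, hence $a=(cd)^2a$, and then kills $(cd)^2$ using the idempotent analysis ($(cd)^2=c^2d^2=0$ since $c,d\neq e$). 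Your sketch contains no substitute for this.

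The $|E(S)|=2$ case has two further problems. First, a factual slip: since $E(S)=\{s^2\mid s\in S\}=\{0,e\}$ and the paper shows $s^2=e$ forces $s=e$ (again by $0$-cancellativity from $se=s^3=s^2=e\neq0$), a non-idempotent $a$ has $a^2=0$, not $a^2=e$ as you posit. Second, and more seriously, your proposed route to a copy of $S_7$ --- checking via Lemma~\ref{lemma24221510} that ``the subalgebra generated by a witnessing non-idempotent element satisfies exactly the identities of $S_7$'' --- cannot work: that subalgebra is just $\{a,a^2\}\cong S_c(a)$ and does not contain $e$, and in any case generating the same variety as $S_7$ would not yield an embedded copy of $S_7$. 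The paper instead works with $a\in I\setminus E(S)$ and shows directly that $ea=a$ (from $a=se$, so $ea=ese=se=a$), $e^2=e$, $a^2=0$, which exhibits $\{0,e,a\}$ as a copy of $S_7$ by inspection of Table~\ref{tb24111401}. You correctly flagged this case as the main obstacle; the missing idea is the repeated use of $I\subseteq Se$ together with $0$-cancellativity, not the equational description of $S_7$.
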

\begin{proof}
By Lemma \ref{s7prop} we know that $S$ is a flat semiring with $E(S)=\{a^2\mid a\in S\}$.
Let $e,f\in E(S)\setminus\{0\}$ and $a\in I\setminus\{0\}$.
Then $a\in Se\cap Sf$ and so $a=s_1e=s_2f$ for some $s_1,s_2\in S$.
This implies that $ae=af=a\neq 0$. Since $(S,\cdot)$ is $0$-cancellative,
it follows immediately that $e=f$ and so $E(S)$ contains at most two elements.

If $|E(S)|=1$, then $E(S)=\{0\}$ and so $a^2=0$ for all $a\in S$.
This shows that $S$ satisfies the identity (\ref{id24101702}) and so $S$ is a member of ${\bf N}$.
Suppose by way of contradiction that $|I|\neq 2$.
Then $|I|> 2$ and so there exist $a, b\in I\setminus\{0\}$ such that $a\neq b$.
Furthermore, we have that $S^1a=S^1b=I$ and so $a=cb$ and $b=da$ for some $c,d\in S$.
This implies that $a=cda$ and so $a=(cd)^2a=0a=0$, a contradiction.
Thus $|I|=2$.

If $|E(S)|=2$, then $E(S)=\{0,e\}$ for some $e \in S\backslash \{0\}$.
Suppose that $a^2=e$ for some $a\in S$. Then $ae=aa^2=a^3=a^2=e\neq 0$ and so $a=e$, since $(S,\cdot)$ is $0$-cancellative.
So $a^2=0$ for all $a\in S\setminus\{e\}$. Suppose that $ab=e$ for some $a,b\in S$. Then $e=e^2=(ab)^2=a^2b^2$ and so $a=b=e$.
We have shown that $e=ab$ implies that $e=a=b$ for all $a,b\in S$.
Now consider the following two cases.

{\bf Case 1}. $I = E(S)$. Suppose that $S\neq E(S)$. Take $a$ in $S\setminus E(S)$.
Then $E(S)=I\subseteq S^1a$ and so $e=sa$ for some $s\in S$, a contradiction.
Thus $S=E(S)$ and so $S$ is isomorphic to $M_2$.

{\bf Case 2}. $I\neq E(S)$. Let $a$ be an arbitrary element in $I$.
Since $I\subseteq Se$, it follows that there exists $s\in S$ such that $a=se$
and so $ea=ese=se=a$. This shows that $ea=a$ for all $a\in I$.
Now it is easy to see that $\{0,e,a\}$ is isomorphic to $S_7$ for all $a\in I\setminus E(S)$.
So $S$ contains a copy of $S_7$.
In the remainder we need only show that $|I|=2$.
Suppose that $a$ and $b$ are distinct elements of $I\setminus\{0\}$.
Then $S^1a=S^1b=I$ and so $a=cb$ and $b=da$ for some $c,d\in S\setminus\{0,e\}$.
This implies that $a=cda$ and so
$a=(cd)^2a=c^2d^2a=0\cdot a=0$, a contradiction.
Hence $|I|=2$ as required.
\end{proof}
Theorem \ref{thm24101201} will be revisited in Section~\ref{sec:blockhypergraph}, where the classification of subdirectly irreducible members of $S_7$ is given greater precision.
\begin{remark}\label{re24101801}
The result in Theorem \ref{thm24101201} is also true if $S$ is a subdirectly
irreducible flat semiring satisfying the identities (\ref{id1}) and (\ref{id2}).
This can be obtained from the proof of Theorem \ref{thm24101201}.
\end{remark}
\begin{pro}
Let $\mathcal{V}$ be a subvariety of $\mathsf{V}(S_7)$, and let $k\geq 1$ be a natural number.
Then $\mathcal{V}$ does not contain $S_c(a_1\cdots a_k)$
if and only if it satisfies the identity
\begin{equation}\label{24101201}
x_1\cdots x_k\approx (x_1\cdots x_k)^2.
\end{equation}
\end{pro}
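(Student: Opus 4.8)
We may assume $k\ge 2$, the case $k=1$ being Proposition~\ref{pro24100301} (there \eqref{24101201} reads $x^2\approx x$ and $S_c(a_1)=S_c(a)$). The forward implication is immediate: under the assignment $x_i\mapsto a_i$ the term $x_1\cdots x_k$ evaluates in $S_c(a_1\cdots a_k)$ to the full word $a_1\cdots a_k\neq 0$, while $(x_1\cdots x_k)^2$ evaluates to $0$. Hence \eqref{24101201} fails in $S_c(a_1\cdots a_k)$, so a variety satisfying \eqref{24101201} cannot contain it.

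For the converse I prove the contrapositive. Since $S_c(a_1\cdots a_k)$ is generated by the singletons $a_1,\dots,a_k$, it lies in $\mathcal V$ as soon as the assignment $x_i\mapsto a_i$ extends to a homomorphism $\eta\from F_{\mathcal V}(x_1,\dots,x_k)\to S_c(a_1\cdots a_k)$ from the $\mathcal V$-free algebra: such an $\eta$ is automatically surjective, exhibiting $S_c(a_1\cdots a_k)$ as a homomorphic image of $F_{\mathcal V}(x_1,\dots,x_k)\in\mathcal V$. So everything reduces to the well-definedness of $\eta$, namely that $\bu(a_1,\dots,a_k)=\bv(a_1,\dots,a_k)$ holds in $S_c(a_1\cdots a_k)$ for every identity $\bu\approx\bv$ of $\mathcal V$; the plan is to show that if this ever fails, then $\mathcal V$ must satisfy \eqref{24101201}.

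The relevant computation is the evaluation in $S_c(a_1\cdots a_k)$: a word maps to the subset-element indexed by its content if it is squarefree and to $0$ otherwise, and, since $+$ is the flat semilattice, a term $\bu$ evaluates to a genuine subset-element $a_A$ precisely when all of its words are squarefree with a common content $A$ --- in which case, by commutativity \eqref{id2} and idempotency of $+$, $\bu$ equals the monomial $m_A:=\prod_{i\in A}x_i$ in $F_{\mathcal V}$ --- and to $0$ in every other case. Thus a failure of well-definedness yields, after normalising, a $\mathcal V$-identity $m_A\approx\bv$ with $\bv(a_1,\dots,a_k)\neq a_A$. The engine that extracts \eqref{24101201} from such an identity has two parts. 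A \emph{block substitution}: choosing a partition of $\{1,\dots,k\}$ into nonempty blocks and sending each variable to the product of the fresh variables $y_1,\dots,y_k$ in its block turns a monomial into $y_1\cdots y_k$; allocating suitable partitions to the parts of $A$ and $A'$ (and sending any excess variables to $y_1\cdots y_k$ or to $(y_1\cdots y_k)^2$) forces the two sides of a monomial identity $m_A\approx m_{A'}$ with $A\neq A'$ to become $y_1\cdots y_k$ and $(y_1\cdots y_k)^2$ --- that is, \eqref{24101201} --- whenever one of $A\setminus A'$, $A'\setminus A$ is empty or has at least two elements. The one remaining configuration, where $A\setminus A'$ and $A'\setminus A$ are both singletons $\{p\}$, $\{q\}$, is fed into a \emph{square-propagation lemma}: the substitution $x_{q}\mapsto x_p^2$ turns $m_A\approx m_{A'}$ into $m_A\approx m_A\,x_p$ with $p\in A$; renaming variables then gives $m_A\approx m_A\,x_q$ for every $q\in A$, and multiplying these identities together produces $m_A\approx m_A^2$, from which \eqref{24101201} follows by one further block substitution.

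The main obstacle is the case in which $\bv$ is a genuine sum evaluating to $0$, so that $\bv$ is not a single monomial. Here I would use the flat distributive identities \eqref{id3} and \eqref{id4}, which allow one to rewrite $s+t^2$ as $(st)^2$ and to absorb a summand that is a multiple of another, in order to convert an offending summand of $\bv$ into a single (non-squarefree) monomial; this would reduce the situation to an identity $m_A\approx\bq$ between two words, already handled above. Carrying out this reduction uniformly over the possible shapes of $\bv$ --- summands with a repeated variable versus squarefree summands of differing content --- is the delicate part of the argument and is where I expect the real work to lie.
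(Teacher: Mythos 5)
Your forward direction is correct and is the same as the paper's. Your converse, however, is genuinely incomplete, and at precisely the point you flag yourself. The whole content of the converse lies in the case where the offending $\mathcal{V}$-identity has the form $m_A\approx\bv$ with $\bv$ a sum that evaluates to $0$, and the tools you nominate for it do not suffice: the identities \eqref{id3} and \eqref{id4} only absorb a summand that is a literal square or a literal multiple of another summand, and they say nothing about, for instance, $x_1x_2\approx x_1+x_2$, where the right-hand side is a sum of two squarefree words of distinct content, neither of which divides the other. (That identity does entail \eqref{24101201} for $k=2$, but only via identifying $x_2$ with $x_1$ --- a move outside the toolbox you describe.) A more tractable reduction is available --- from $\bu\approx\bv_1+\cdots+\bv_n$ one derives $\bu\approx\bu+\bv_j$ for each $j$ by adding $\bv_j$ to both sides and using idempotency of $+$, which reduces the sum case to two-summand identities $m_A\approx m_A+\bq$ --- but even then a further case analysis on $\bq$ is required, and none of this is carried out. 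Your monomial-versus-monomial analysis (block substitutions plus square propagation) is loose but looks completable; the sum case is a real gap, so the converse is not proved.

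It is worth noting that the paper takes an entirely different, structural route that sidesteps the syntactic case analysis. Since $\mathsf{V}(S_7)$ is generated by a finite algebra, a subvariety $\mathcal{V}$ failing \eqref{24101201} contains a finite subdirectly irreducible member $S$ failing it, and $S$ is a flat semiring by Lemma~\ref{lem24121301}. Theorem~\ref{thm24101201} then leaves three cases: $S\cong M_2$ is impossible because $M_2$ satisfies \eqref{24101201}; if $S$ contains a copy of $S_7$ then $S_c(a_1\cdots a_k)\in\mathcal{V}$ because it already lies in $\mathsf{V}(S_7)$; and if $S\in{\bf N}$ then any witnesses $b_1,\dots,b_k$ of the failure satisfy $b_1\cdots b_k\neq 0$ and generate a subsemiring isomorphic to $S_c(a_1\cdots a_k)$ by $0$-cancellativity. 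If you want to salvage your free-algebra approach you must complete the equational derivation in the sum case; otherwise the detour through the classification of subdirectly irreducibles is what lets the argument close in a few lines.
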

\begin{proof}
Suppose that $\mathcal{V}$ satisfies the identity (\ref{24101201}).
It is easy to see that $S_c(a_1\cdots a_k)$ does not satisfy (\ref{24101201}), so $\mathcal{V}$ does not contain $S_c(a_1a_2\cdots a_k)$.
Conversely, assume that $\mathcal{V}$ does not satisfy (\ref{24101201}).
Then there exists a finite subdirectly irreducible flat semiring $S$ in $\mathcal{V}$ that does not satisfy (\ref{24101201}).
By Theorem \ref{thm24101201} we know that $S$ is a member of ${\bf N}$,
or $S$ is isomorphic to $M_2$, or $S$ contains a copy of $S_7$.

If~$S$ is isomorphic to $M_2$, then $S$ satisfies (\ref{24101201}), a contradiction.
If~$S$ contains a copy of $S_7$, then $\mathcal{V}$ contains $S_c(a_1\cdots a_k)$,
since $\mathsf{V}(S_7)$ contains $S_c(a_1\cdots a_k)$.
If~$S$ is a member of ${\bf N}$, then there exist pairwise distinct elements
$b_1, b_2, \ldots, b_k$ in $S$ such that $b_1b_2\cdots b_k \neq 0$.
Furthermore, it is a routine matter to verify that
the subsemiring of~$S$ generated by $\{b_1, b_2, \ldots, b_k\}$ is isomorphic to
$S_c(a_1\cdots a_k)$.
Therefore, $\mathcal{V}$ contains $S_c(a_1\cdots a_k)$ as required.
\end{proof}

As a corollary, we have the following result.
\begin{cor}\label{cor241018001}
Let $\mathcal{V}$ be a subvariety of ${\bf N}$,
and let $k\geq 1$ be a natural number. Then $\mathcal{V}$ does not contain
$S_c(a_1\cdots a_k)$ if and only if it satisfies the identity
\begin{equation}\label{id24101703}
x_1\cdots x_k\approx y_1\cdots y_k.
\end{equation}
\end{cor}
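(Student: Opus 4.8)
The plan is to read the result off the preceding Proposition and then reduce the Corollary to a purely syntactic equivalence of identities modulo the equational theory of $\mathbf{N}$. Since $\mathcal{V}$ is a subvariety of $\mathbf{N}$, and $\mathbf{N}$ is a subvariety of $\mathsf{V}(S_7)$, the preceding Proposition applies and tells us that $\mathcal{V}$ fails to contain $S_c(a_1\cdots a_k)$ precisely when $\mathcal{V}$ satisfies \eqref{24101201}, that is $x_1\cdots x_k\approx (x_1\cdots x_k)^2$. Hence it suffices to prove that, in the presence of the defining identity \eqref{id24101702} of $\mathbf{N}$ (together with commutativity \eqref{id2}), the identities \eqref{24101201} and \eqref{id24101703} are interderivable; the equivalence for $\mathcal{V}$ then follows because every member of $\mathcal{V}$ satisfies the laws of $\mathbf{N}$.

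The easy direction is \eqref{id24101703} $\Rightarrow$ \eqref{24101201}: substituting $y_i\mapsto x_i^2$ into $x_1\cdots x_k\approx y_1\cdots y_k$ and using commutativity to rearrange $x_1^2\cdots x_k^2$ as $(x_1\cdots x_k)^2$ yields \eqref{24101201} immediately, and this uses nothing beyond commutativity. For the converse I would first record two consequences of the law $x^2y\approx x^2$ in $\mathbf{N}$. First, all squares coincide: from $x^2y^2=(x^2y)y\approx x^2y\approx x^2$ and its commutative dual one gets $x^2\approx x^2y^2\approx y^2$. Second, $(x_1\cdots x_k)^2\approx x_1^2$, obtained by rewriting $(x_1\cdots x_k)^2$ as $x_1^2x_2^2\cdots x_k^2$ and then absorbing the tail via a single application of $x^2y\approx x^2$ (the case $k=1$ being trivial).

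With these in hand the backward direction is a short chain. Assuming \eqref{24101201}, we get $x_1\cdots x_k\approx (x_1\cdots x_k)^2\approx x_1^2$; renaming variables gives $y_1\cdots y_k\approx y_1^2$; and since all squares are equal we have $x_1^2\approx y_1^2$. Concatenating these three yields $x_1\cdots x_k\approx x_1^2\approx y_1^2\approx y_1\cdots y_k$, which is exactly \eqref{id24101703}. I expect the only delicate point to be this backward direction, specifically the need to collapse the two superficially unrelated squares $x_1^2$ and $y_1^2$ into one; this is precisely where the hypothesis that $\mathcal{V}$ is a subvariety of $\mathbf{N}$ is essential, as it is the law $x^2y\approx x^2$ that forces all squares to coincide, something that fails in $\mathsf{V}(S_7)$ itself. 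A minor bookkeeping matter is to check that the argument runs uniformly for every $k\ge 1$, which it does once the degenerate case $k=1$ of $(x_1\cdots x_k)^2\approx x_1^2$ is noted to be vacuous.
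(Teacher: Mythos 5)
Your proposal is correct and follows exactly the route the paper intends: the paper states this as an immediate consequence of the preceding proposition, leaving unwritten precisely the syntactic equivalence of \eqref{24101201} and \eqref{id24101703} modulo the laws of ${\bf N}$ that you supply. Your derivation of that equivalence (all squares coincide under $x^2y\approx x^2$, and $(x_1\cdots x_k)^2\approx x_1^2$) is sound, including the degenerate case $k=1$.
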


Let $k\geq 1$ be a natural number and let ${\bf N}_k$ denote the subvariety of ${\bf N}$
determined by the identity (\ref{id24101703}).
It is easy to see that $S_c(a_1a_2\cdots a_{k})$ is a member of ${\bf N}_{k+1}$,
but does not lie in ${\bf N}_k$. So the sequence
\[{\bf N}_1\subset {\bf N}_2\subset {\bf N}_3\subset\cdots{\bf N}_k\subset {\bf N}_{k+1}\subset \cdots\]
is an infinite strictly ascending chain in the interval $[{\bf N}_2, {\bf N}]$.

\begin{pro}\label{p3}
Every finite member of ${\bf N}$ lies in ${\bf N}_k$ for some $k\geq 1$,
that is, the multiplicative reduct of each finite member
of ${\bf N}$ is nilpotent.
\end{pro}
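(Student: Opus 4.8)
The plan is to show that the defining identity of $\mathbf{N}$ forces every square in a member of $\mathbf{N}$ to equal a single absorbing element, and then to exploit finiteness together with commutativity so that any sufficiently long product must vanish.

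First I would prove that $\mathbf{N}$ satisfies $x^2 \approx y^2$. Since $\mathbf{N}$ is a subvariety of $\mathsf{V}(S_7)$ it satisfies the commutative law (\ref{id2}), and by definition it satisfies (\ref{id24101702}), that is $x^2 y \approx x^2$. Identities are closed under substitution, so substituting $y \mapsto y^2$ gives $x^2 y^2 \approx x^2$, while substituting into the same identity with variables renamed (take $x^2 y \approx x^2$ with the roles of $x$ and $y$ interchanged, then replace the free variable by $x^2$) gives $y^2 x^2 \approx y^2$. By commutativity $x^2 y^2 \approx y^2 x^2$, and hence $x^2 \approx y^2$. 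Consequently, in any $S \in \mathbf{N}$ all squares share a single value, which I denote $0$. Moreover $0$ is a multiplicative zero of $S$: for every $y$ we have $0 \cdot y = x^2 y = x^2 = 0$ by (\ref{id24101702}). In particular $c^2 = 0$ for every $c \in S$.

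Next I would pass to a finite $S \in \mathbf{N}$, say with $|S| = m$, and consider an arbitrary product $a_1 a_2 \cdots a_n$ of elements of $S$ with $n \ge m+1$. By the pigeonhole principle two of the factors coincide, say $a_i = a_j = c$ with $i \ne j$. Using commutativity (\ref{id2}) I may gather these two factors together, and since $c^2 = 0$ is a multiplicative zero the whole product collapses to $0$. Thus every product of length $m+1$ in $S$ equals $0$, so $S$ satisfies (\ref{id24101703}) with $k = m+1$ (both sides evaluate to $0$). Therefore $S$ lies in $\mathbf{N}_{m+1}$, and in particular its multiplicative reduct is nilpotent, which is exactly the assertion.

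The only genuine content is the first step: recognizing that $x^2 y \approx x^2$ together with commutativity collapses all squares into one absorbing zero, so that every element is square-nilpotent; everything after that is a routine counting argument and requires no appeal to subdirect irreducibility. I would not expect the index $m+1$ to be sharp---for $S_c(a_1 \cdots a_k)$ the true nilpotency index is $k+1$ whereas $m = 2^k$---but sharpness is not needed for the stated result.
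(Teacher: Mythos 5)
Your proof is correct. It follows the same overall strategy as the paper's: first observe that every element of a member of $\mathbf{N}$ squares to a single absorbing zero, so the multiplicative reduct is a nil-semigroup of index $2$, and then conclude nilpotence from finiteness. The paper dispatches the second step in one line by citing the well-known fact that every finite nil-semigroup is nilpotent, whereas you prove it directly via the pigeonhole principle: any product of $|S|+1$ factors repeats a factor, and commutativity lets you gather the repeated pair into a square, collapsing the product to $0$. Your version is self-contained and gives an explicit (non-sharp, as you note) nilpotency bound, at the cost of relying essentially on commutativity -- the general semigroup fact the paper invokes holds without it, but for $\mathbf{N}\subseteq\mathsf{V}(S_7)$ commutativity is available, so nothing is lost. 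Both derivations of $x^2\approx y^2$ and of $0$ being a multiplicative zero from $x^2y\approx x^2$ together with $xy\approx yx$ are sound.
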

\begin{proof}
Let $S$ be a finite member of ${\bf N}$.
Then $(S, \cdot)$ is a finite nil-semigroup.
It is well-known that every finite nil-semigroup is nilpotent.
Thus $(S, \cdot)$ is nilpotent and so $S$ is a member of ${\bf N}_k$ for some $k\geq 1$.
\end{proof}

By Proposition \ref{p3} we immediately deduce the following.
\begin{cor}\label{c3.1}
The variety ${\bf N}$ is the join of all ${\bf N}_k$, $k\geq1$.
\end{cor}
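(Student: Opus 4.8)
The plan is to prove the two inclusions separately. The inclusion $\bigvee_{k\geq 1}{\bf N}_k \subseteq {\bf N}$ is immediate, since each ${\bf N}_k$ is by definition a subvariety of ${\bf N}$, and the join is the least variety containing all of them.

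For the reverse inclusion, the key observation is that ${\bf N}$ is locally finite. Indeed, ${\bf N}$ is a subvariety of $\mathsf{V}(S_7)$, and $\mathsf{V}(S_7)$ is generated by the finite algebra $S_7$, hence is locally finite; local finiteness passes to subvarieties. I would then invoke the standard fact that any locally finite variety is generated by its finite members: every variety is generated by its finitely generated free algebras $F_{\mathcal V}(n)$, $n\in\mathbb N$, and in a locally finite variety these are all finite. Equivalently, since an identity involves only finitely many variables, it fails in ${\bf N}$ precisely when it fails in some finitely generated---hence finite---member, so ${\bf N}$ and $\mathsf{V}(\{\text{finite members of } {\bf N}\})$ satisfy the same identities and therefore coincide.

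With this in hand the argument closes quickly. Let $S$ be an arbitrary finite member of ${\bf N}$. By Proposition \ref{p3}, $S$ lies in ${\bf N}_k$ for some $k\geq 1$, and hence in $\bigvee_{k\geq 1}{\bf N}_k$. Thus every finite member of ${\bf N}$ belongs to the join, and since ${\bf N}$ is generated by its finite members we obtain ${\bf N}\subseteq \bigvee_{k\geq 1}{\bf N}_k$. Combined with the first inclusion, this yields the desired equality.

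I expect no serious obstacle here; the only point requiring a little care is the justification that ${\bf N}$ is generated by its finite members, which rests entirely on the local finiteness of $\mathsf{V}(S_7)$. Everything else is a formal manipulation of the lattice-theoretic join together with the already-established Proposition \ref{p3}.
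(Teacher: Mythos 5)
Your proof is correct and follows essentially the same route as the paper, which simply states that the corollary follows ``immediately'' from Proposition~\ref{p3}; the local-finiteness argument (that ${\bf N}$, as a subvariety of the finitely generated variety $\mathsf{V}(S_7)$, is generated by its finite members) is exactly the implicit step the paper relies on, and you have filled it in correctly.
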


\begin{cor}
The variety ${\bf N}$ is not finitely generated.
\end{cor}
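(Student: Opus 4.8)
The plan is to derive this corollary directly from Corollary~\ref{c3.1}, which asserts that $\mathbf{N}$ is the join $\bigvee_{k\ge 1}\mathbf{N}_k$ of the strictly ascending chain $\mathbf{N}_1\subset\mathbf{N}_2\subset\cdots$. The strategy is a standard compactness-style argument: I would suppose, for contradiction, that $\mathbf{N}$ is finitely generated, say $\mathbf{N}=\mathsf{V}(A)$ for some finite ai-semiring $A$, and show that this forces $\mathbf{N}=\mathbf{N}_k$ for a single $k$, contradicting the strictness of the chain.

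First I would observe that $A$, being a finite member of $\mathbf{N}$, lies in some $\mathbf{N}_k$ by Proposition~\ref{p3}. Since $\mathbf{N}_k$ is a variety (hence closed under $\mathsf{V}$), it follows that $\mathsf{V}(A)\subseteq\mathbf{N}_k$, whence $\mathbf{N}=\mathsf{V}(A)\subseteq\mathbf{N}_k\subseteq\mathbf{N}$, giving $\mathbf{N}=\mathbf{N}_k$. The final step is to exhibit the contradiction: the displayed chain
\[
\mathbf{N}_1\subset \mathbf{N}_2\subset \mathbf{N}_3\subset\cdots
\]
is strictly ascending, witnessed by $S_c(a_1\cdots a_k)\in\mathbf{N}_{k+1}\setminus\mathbf{N}_k$, so in particular $\mathbf{N}_k\subsetneq\mathbf{N}_{k+1}\subseteq\mathbf{N}$, contradicting $\mathbf{N}=\mathbf{N}_k$.

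I do not anticipate a genuine obstacle here, as this is a routine consequence of the preceding results; the only point requiring care is the identification of a finite generating algebra with membership in a single $\mathbf{N}_k$, which is exactly what Proposition~\ref{p3} supplies. The key ingredient is simply that finite generation of a variety means generation by a single finite algebra (via the finite direct product of a finite generating set), so that Proposition~\ref{p3} applies to pin that generator into one level of the chain.
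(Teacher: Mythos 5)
Your argument is correct and is essentially identical to the paper's own proof: assume $\mathbf{N}=\mathsf{V}(A)$ for a finite $A$, use Proposition~\ref{p3} to place $A$ in some $\mathbf{N}_k$, and contradict the strict ascent of the chain $\mathbf{N}_1\subset\mathbf{N}_2\subset\cdots$ witnessed by $S_c(a_1\cdots a_k)$. Your closing remark about reducing a finite generating set to a single finite generator via a direct product is a sensible (if implicit in the paper) point of care.
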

\begin{proof}
Suppose that ${\bf N}$ is generated by a finite ai-semiring $S$.
Then by Proposition~\ref{p3},~$S$ lies in some ${\bf N}_k$.
This implies that ${\bf N}={\bf N}_k$, a contradiction. So ${\bf N}$ is not finitely generated.
\end{proof}

The full proof of the following proposition is postponed until the next section, where it is shown that ${\bf N}$ is the join of the varieties $\mathsf{V}(S_c(a_1\cdots a_k))$ for all $k\geq 1$.
\begin{pro}\label{pro24101901}
$[\mathsf{V}(S_c(a)), {\bf N}]$ is the disjoint union of $\{{\bf N}\}$ and the following intervals
\[
[\mathsf{V}(S_c(a_1\cdots a_k)), {\bf N}_{k+1}],~k \geq 1.
\]
\end{pro}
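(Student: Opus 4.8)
The plan is to attach to each variety $\mathcal{V}$ in the interval $[\mathsf{V}(S_c(a)), {\bf N}]$ the invariant
\[
D(\mathcal{V})=\{k\ge 1 : S_c(a_1\cdots a_k)\in\mathcal{V}\},
\]
and to show that $D(\mathcal{V})$ determines which piece of the claimed partition $\mathcal{V}$ belongs to. First I would verify that $D(\mathcal{V})$ is an initial segment of $\mathbb{N}$ containing $1$. Membership of $1$ is immediate from $\mathsf{V}(S_c(a))\subseteq\mathcal{V}$. Downward closure comes from identifying the nonzero elements of $S_c(a_1\cdots a_k)$ with the nonempty subsets of $\{a_1,\dots,a_k\}$ under disjoint-union multiplication: the subsets avoiding $a_k$ form a subalgebra isomorphic to $S_c(a_1\cdots a_{k-1})$, so $S_c(a_1\cdots a_{k-1})$ embeds in $S_c(a_1\cdots a_k)$ and membership of the larger forces membership of the smaller. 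Hence $D(\mathcal{V})$ is either $\{1,\dots,K\}$ for some finite $K\ge 1$, or all of $\mathbb{N}$.

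Next I would translate the two boundary conditions through Corollary \ref{cor241018001} and the definition of ${\bf N}_k$: for a subvariety of ${\bf N}$, the failure $S_c(a_1\cdots a_k)\notin\mathcal{V}$ is equivalent to $\mathcal{V}\subseteq{\bf N}_k$. In the finite case $D(\mathcal{V})=\{1,\dots,K\}$, the point $K\in D(\mathcal{V})$ gives $\mathsf{V}(S_c(a_1\cdots a_K))\subseteq\mathcal{V}$, while $K+1\notin D(\mathcal{V})$ gives $\mathcal{V}\subseteq{\bf N}_{K+1}$, so $\mathcal{V}\in[\mathsf{V}(S_c(a_1\cdots a_K)),{\bf N}_{K+1}]$. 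Conversely, any $\mathcal{V}$ in this interval lies in $[\mathsf{V}(S_c(a)),{\bf N}]$ and has $D(\mathcal{V})=\{1,\dots,K\}$: the lower endpoint forces $K\in D(\mathcal{V})$, the upper endpoint together with Corollary \ref{cor241018001} forces $K+1\notin D(\mathcal{V})$, and downward closure fills in the rest. Thus for each $K$ the interval $[\mathsf{V}(S_c(a_1\cdots a_K)),{\bf N}_{K+1}]$ is \emph{exactly} the set of $\mathcal{V}$ with $D(\mathcal{V})=\{1,\dots,K\}$.

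For the remaining case $D(\mathcal{V})=\mathbb{N}$, the variety $\mathcal{V}$ contains every $S_c(a_1\cdots a_k)$ and hence their join. Here I would invoke the key fact, deferred to the next section, that ${\bf N}=\bigvee_{k\ge 1}\mathsf{V}(S_c(a_1\cdots a_k))$, giving ${\bf N}\subseteq\mathcal{V}\subseteq{\bf N}$ and so $\mathcal{V}={\bf N}$; thus $\{{\bf N}\}$ is exactly the set of $\mathcal{V}$ with $D(\mathcal{V})=\mathbb{N}$. Since the two possible shapes of the initial segment $D(\mathcal{V})$ are mutually exclusive and exhaust all possibilities, the pieces of the alleged partition correspond bijectively to the distinct values of the invariant $D$; they are therefore pairwise disjoint and cover the whole interval, which is the assertion.

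The only substantive step is the join identity ${\bf N}=\bigvee_{k\ge 1}\mathsf{V}(S_c(a_1\cdots a_k))$ used in the case $D(\mathcal{V})=\mathbb{N}$; everything else is bookkeeping with the embedding chain and Corollary \ref{cor241018001}. This is precisely why the full argument is postponed: by Proposition \ref{p3} every finite member of ${\bf N}$ already lies in some ${\bf N}_k$, but establishing the join requires the stronger statement that each such member is captured by some $\mathsf{V}(S_c(a_1\cdots a_m))$, which is the content developed in the next section.
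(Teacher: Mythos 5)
Your proof is correct and follows essentially the same route as the paper's: split on whether $\mathcal{V}$ contains all $S_c(a_1\cdots a_k)$ (handled by the deferred join identity, the paper's Lemma~\ref{lem:wordstratification}) or fails at some first $k+1$ (handled by Corollary~\ref{cor241018001}). Your invariant $D(\mathcal{V})$ merely packages this, with the added benefit of making explicit the disjointness of the pieces and the downward-closure/embedding step that the paper leaves implicit.
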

\begin{proof}
Let $\mathcal{V}$ be an arbitrary variety in $[\mathsf{V}(S_c(a)), {\bf N}]$.
If $\mathcal{V}$ contains $S_c(a_1\cdots a_k)$ for all $k \geq 1$,
then we show in Lemma~\ref{lem:wordstratification}, below, that $\mathcal{V}={\bf N}$.
Otherwise, there exists $k\geq 1$ such that $\mathcal{V}$ contains $S_c(a_1\cdots a_k)$,
but does not contain $S_c(a_1\cdots a_{k+1})$.
By Corollary \ref{cor241018001} we immediately deduce that $\mathcal{V}$ lies in
$[\mathsf{V}(S_c(a_1\cdots a_k)), {\bf N}_{k+1}]$.
\end{proof}

\begin{cor}\label{co241019001}
Let $\mathcal{V}$ be a variety in $[\mathsf{V}(S_c(a)), {\bf N}]$.
If $\mathcal{V}$ does not contain $S_c(abc)$,
then $\mathcal{V}$ is equal to $\mathsf{V}(S_c(ab))$ or $\mathsf{V}(S_c(a))$.
\end{cor}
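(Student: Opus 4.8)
The plan is to combine the structural interval decomposition from Proposition~\ref{pro24101901} with the finite-basis facts about $[\mathsf{V}(S_c(a)),{\bf N}]$ that are already in place. By Proposition~\ref{pro24101901}, if $\mathcal{V}\in[\mathsf{V}(S_c(a)),{\bf N}]$ and $\mathcal{V}\neq{\bf N}$, then $\mathcal{V}$ lies in exactly one interval $[\mathsf{V}(S_c(a_1\cdots a_k)),{\bf N}_{k+1}]$ for some $k\geq 1$. The point of the corollary is to understand what the hypothesis ``$\mathcal{V}$ does not contain $S_c(abc)=S_c(a_1a_2a_3)$'' forces about which interval $\mathcal{V}$ can inhabit.

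First I would observe that if $\mathcal{V}$ does not contain $S_c(abc)$, then $\mathcal{V}$ cannot equal ${\bf N}$, since ${\bf N}$ contains $S_c(a_1\cdots a_k)$ for every $k$ (this is the content of Lemma~\ref{s7prop}(d) restricted to ${\bf N}$, or directly from Proposition~\ref{pro24101901}). Hence $\mathcal{V}$ lies in some interval $[\mathsf{V}(S_c(a_1\cdots a_k)),{\bf N}_{k+1}]$. Next I would show that the index $k$ is forced to be small: since $S_c(abc)$ is a member of ${\bf N}_4$ but not of ${\bf N}_3$, and since $\mathcal{V}$ does not contain $S_c(abc)$, Corollary~\ref{cor241018001} (with $k=3$) tells us that $\mathcal{V}$ satisfies the identity $x_1x_2x_3\approx y_1y_2y_3$, i.e. $\mathcal{V}\subseteq{\bf N}_3$. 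This rules out every interval with $k\geq 3$ (for those force containment of $S_c(a_1a_2a_3)$), leaving only $k=1$ or $k=2$.

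The key step is then to analyse the two surviving intervals. For $k=2$, the interval is $[\mathsf{V}(S_c(ab)),{\bf N}_3]$, and I would argue that this interval collapses to a single variety, namely $\mathsf{V}(S_c(ab))={\bf N}_3$; similarly for $k=1$ the interval $[\mathsf{V}(S_c(a)),{\bf N}_2]$ collapses to $\mathsf{V}(S_c(a))={\bf N}_2$. To see that these intervals are singletons, I would use Corollary~\ref{cor241018001} in the reverse direction: any $\mathcal{V}$ strictly below ${\bf N}_{k+1}$ and not containing $S_c(a_1\cdots a_{k+1})$ must satisfy $x_1\cdots x_{k+1}\approx y_1\cdots y_{k+1}$, placing it in ${\bf N}_{k+1}$, while containing $S_c(a_1\cdots a_k)$ forces it above $\mathsf{V}(S_c(a_1\cdots a_k))$; one then checks that $\mathsf{V}(S_c(a_1\cdots a_k))$ already equals ${\bf N}_{k+1}$ for $k=1,2$ by verifying the defining identities coincide, so no room remains between the endpoints.

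I expect the main obstacle to be the claim that these low intervals are genuinely trivial, i.e. that $\mathsf{V}(S_c(ab))={\bf N}_3$ and $\mathsf{V}(S_c(a))={\bf N}_2$. This is a statement that the endpoints of the interval coincide, and it requires verifying that the subdirectly irreducible members of ${\bf N}_{k+1}$ not containing $S_c(a_1\cdots a_{k+1})$ are all generated by (subalgebras of) $S_c(a_1\cdots a_k)$. For this I would lean on the classification in Theorem~\ref{thm24101201}: any subdirectly irreducible member of ${\bf N}$ is a flat semiring with $|E(S)|=1$ and nilpotent multiplicative reduct, and the constraint from the identity $x_1x_2x_3\approx y_1y_2y_3$ bounds the length of nonzero products by $2$, which should pin the subdirectly irreducibles down to subalgebras of $S_c(ab)$ (respectively $S_c(a)$). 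Once the endpoint-coincidence is established, the conclusion that $\mathcal{V}\in\{\mathsf{V}(S_c(ab)),\mathsf{V}(S_c(a))\}$ follows immediately.
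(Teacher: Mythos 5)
Your proposal is correct and takes essentially the same route as the paper: decompose $[\mathsf{V}(S_c(a)),{\bf N}]$ via Proposition~\ref{pro24101901}, use the absence of $S_c(abc)$ to confine $\mathcal{V}$ to the intervals $[\mathsf{V}(S_c(a)),{\bf N}_2]$ and $[\mathsf{V}(S_c(ab)),{\bf N}_3]$, and observe that these collapse because $\mathsf{V}(S_c(a))={\bf N}_2$ and $\mathsf{V}(S_c(ab))={\bf N}_3$. The paper simply cites \cite[Propositions~3.2 and~3.4]{rjzl} for those two equalities rather than reproving them; your sketched direct argument for the collapse is in the right spirit, though note that the subdirectly irreducible members of ${\bf N}_3$ need only lie in $\mathsf{V}(S_c(ab))$ (they are block hypergraph semirings that may be larger than $S_c(ab)$), so the final ``pin down to subalgebras of $S_c(ab)$'' step should be weakened accordingly.
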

\begin{proof}
Suppose that $\mathcal{V}$ is a variety in $[\mathsf{V}(S_c(a)), {\bf N}]$ that does not contain $S_c(abc)$.
Then by Proposition \ref{pro24101901}, the variety $\mathcal{V}$ lies in
$[\mathsf{V}(S_c(a_1)), {\bf N}_{2}]$ or $[\mathsf{V}(S_c(a_1a_2)), {\bf N}_{3}]$.
Combining \cite[Propositions 3.2 and 3.4]{rjzl} we deduce that
$\mathsf{V}(S_c(a_1))={\bf N}_{2}$ and $\mathsf{V}(S_c(a_1a_2))={\bf N}_{3}$.
So $\mathcal{V}$ is equal to $\mathsf{V}(S_c(ab)$ or $\mathsf{V}(S_c(a))$.
\end{proof}

Let $\mathcal{V}_1$ and $\mathcal{V}_2$ be ai-semiring varieties.
We denote by $\mathcal{V}_1\vee \mathcal{V}_2$ the join of $\mathcal{V}_1$ and $\mathcal{V}_2$,
that is, the smallest variety containing $\mathcal{V}_1$ and $\mathcal{V}_2$.
\begin{pro}\label{pro24101811}
$[\mathsf{V}(M_2, S_c(a)), \mathsf{V}(S_7)]$ is the union of
\[
\{\mathsf{V}(S_7), \mathsf{V}(M_2, S_c(a)), \mathsf{V}(M_2, S_c(ab))\}
\]
and
\[
[\mathsf{V}(M_2, S_c(abc)), \mathsf{V}(M_2)\vee{\bf N}].
\]
\end{pro}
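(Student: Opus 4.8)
The plan is to classify an arbitrary variety $\mathcal{V}$ in $[\mathsf{V}(M_2, S_c(a)), \mathsf{V}(S_7)]$ through its subdirectly irreducible members. By Birkhoff's subdirect representation theorem, $\mathcal{V}$ is generated by the subdirectly irreducible algebras it contains; since $\mathcal{V}\subseteq\mathsf{V}(S_7)$, every such algebra $S$ is a subdirectly irreducible member of $\mathsf{V}(S_7)$, so Theorem~\ref{thm24101201} applies and tells us that $S$ lies in ${\bf N}$, or $S\cong M_2$, or $S$ contains a subalgebra isomorphic to $S_7$. Before the case analysis I would dispatch the easy inclusion, namely that each of the three named varieties and the whole interval $[\mathsf{V}(M_2, S_c(abc)), \mathsf{V}(M_2)\vee{\bf N}]$ actually lies in $[\mathsf{V}(M_2, S_c(a)), \mathsf{V}(S_7)]$. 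This uses only that $M_2$ and every $S_c(a_1\cdots a_k)$ belong to $\mathsf{V}(S_7)$ (Lemma~\ref{s7prop}(d)), that each $S_c(a_1\cdots a_k)$ is nil and hence lies in ${\bf N}$ (so $\mathsf{V}(M_2)\vee{\bf N}\subseteq\mathsf{V}(S_7)$ contains $S_c(abc)$), and that $S_c(a)$ embeds into $S_c(ab)$ and $S_c(ab)$ into $S_c(abc)$.

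For the reverse inclusion I would split on whether $S_c(abc)\in\mathcal{V}$. Suppose first that $S_c(abc)\notin\mathcal{V}$. Then no subdirectly irreducible $S\in\mathcal{V}$ can contain a copy of $S_7$: otherwise $S_7$ would embed into $S$, forcing $S_7\in\mathcal{V}$ and hence $S_c(abc)\in\mathsf{V}(S_7)\subseteq\mathcal{V}$, a contradiction. Thus every subdirectly irreducible member of $\mathcal{V}$ lies in ${\bf N}$ or is isomorphic to $M_2$, and in particular every subdirectly irreducible $S\in{\bf N}$ satisfies $x^2=0$ (its idempotent set is $\{0\}$ by Theorem~\ref{thm24101201}). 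If moreover $S_c(ab)\notin\mathcal{V}$, then $\mathcal{V}$ satisfies $xy\approx(xy)^2$; applied to such an $S$, $xy=(xy)^2=x^2y^2=0$, so $S$ lies in ${\bf N}_2=\mathsf{V}(S_c(a))$. Hence all subdirectly irreducibles of $\mathcal{V}$ lie in $\mathsf{V}(M_2, S_c(a))$ and $\mathcal{V}=\mathsf{V}(M_2, S_c(a))$. If instead $S_c(ab)\in\mathcal{V}$, then $\mathcal{V}$ satisfies $xyz\approx(xyz)^2$, which pushes every nil subdirectly irreducible into ${\bf N}_3=\mathsf{V}(S_c(ab))$ by the same computation, giving $\mathcal{V}=\mathsf{V}(M_2, S_c(ab))$; here I would invoke Corollary~\ref{co241019001} (via \cite{rjzl}) for the identifications ${\bf N}_2=\mathsf{V}(S_c(a))$ and ${\bf N}_3=\mathsf{V}(S_c(ab))$.

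Finally suppose $S_c(abc)\in\mathcal{V}$, so $\mathcal{V}\supseteq\mathsf{V}(M_2, S_c(abc))$. The decisive dichotomy is whether some subdirectly irreducible $S\in\mathcal{V}$ contains a copy of $S_7$. If one does, then $S_7$ embeds into $S$, so $S_7\in\mathcal{V}$, and $\mathsf{V}(S_7)\subseteq\mathcal{V}\subseteq\mathsf{V}(S_7)$ forces $\mathcal{V}=\mathsf{V}(S_7)$. If none does, then every subdirectly irreducible of $\mathcal{V}$ lies in ${\bf N}$ or is isomorphic to $M_2$, whence $\mathcal{V}\subseteq\mathsf{V}(M_2)\vee{\bf N}$ and therefore $\mathcal{V}\in[\mathsf{V}(M_2, S_c(abc)), \mathsf{V}(M_2)\vee{\bf N}]$. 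I expect this last dichotomy to be the crux: it is exactly the statement that no subvariety sits strictly between $\mathsf{V}(M_2)\vee{\bf N}$ and $\mathsf{V}(S_7)$ in this interval, and it works only because Theorem~\ref{thm24101201} forces any subdirectly irreducible lying outside $\mathsf{V}(M_2)\vee{\bf N}$ to contain a full copy of $S_7$, dragging in all of $\mathsf{V}(S_7)$ at once. To confirm that the stated union is genuinely disjoint I would finally check $\mathsf{V}(S_7)\neq\mathsf{V}(M_2)\vee{\bf N}$ using the identity $x^2y\approx x^2y^2$, which holds in $M_2$ and in ${\bf N}$ but fails in $S_7$ at $x=1$, $y=a$.
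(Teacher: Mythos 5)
Your proposal is correct and follows essentially the same route as the paper: both arguments reduce to classifying the subdirectly irreducible members via Theorem~\ref{thm24101201} (either $M_2$, or nil, or containing a copy of $S_7$) and then use the collapses ${\bf N}_2=\mathsf{V}(S_c(a))$ and ${\bf N}_3=\mathsf{V}(S_c(ab))$ from Corollary~\ref{co241019001} to dispose of the cases where $S_c(abc)\notin\mathcal{V}$. The only difference is organisational (you split on membership of $S_c(abc)$ first and partially re-derive Corollary~\ref{co241019001} inline), plus a harmless extra disjointness check that the statement does not require.
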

\begin{proof}
Let $\mathcal{V}$ be a proper subvariety of $\mathsf{V}(S_7)$ that contains $M_2$ and $S_c(a)$.
By Theorem \ref{thm24101201} it follows that each subdirectly irreducible member of $\mathcal{V}$
is isomorphic to $M_2$ or lies in ${\bf N}$.
This implies that $\mathcal{V}$ is the join of $\mathsf{V}(M_2)$ and some member of $[\mathsf{V}(S_c(a)), {\bf N}]$.
By Corollary \ref{co241019001} it follows that $\mathcal{V}$ lies in
$[\mathsf{V}(M_2, S_c(abc)), \mathsf{V}(M_2)\vee{\bf N}]$
or is equal to $\mathsf{V}(M_2, S_c(a))$ or $\mathsf{V}(M_2, S_c(ab))$.
\end{proof}

\begin{pro}\label{3lem1}
The join of $\mathsf{V}(M_2)$ and ${\bf N}$ is the unique maximal subvariety of $\mathsf{V}(S_7)$.
\end{pro}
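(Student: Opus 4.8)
The plan is to deduce the proposition directly from the classification of subdirectly irreducible members in Theorem~\ref{thm24101201}, using Birkhoff's subdirect decomposition theorem as the organising principle. The argument splits into two parts: first, that $\mathsf{V}(M_2)\vee{\bf N}$ is a \emph{proper} subvariety of $\mathsf{V}(S_7)$; and second, that every proper subvariety of $\mathsf{V}(S_7)$ is contained in $\mathsf{V}(M_2)\vee{\bf N}$. Together these yield maximality and uniqueness simultaneously.

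For properness, I would exhibit a single identity holding in $\mathsf{V}(M_2)\vee{\bf N}$ but failing in $S_7$. Since an identity holds in a join exactly when it holds in each joinand, it suffices to find one valid in both $M_2$ and ${\bf N}$ yet false in $S_7$. The identity
\[
x^2y\approx x^2y^2
\]
works: in $M_2$ the multiplication is idempotent, so $z^2\approx z$ and both sides reduce to $xy$; in ${\bf N}$ the defining identity $x^2y\approx x^2$ of \eqref{id24101702} collapses the left side to $x^2$ and, applied with $y^2$ in place of $y$, collapses the right side to $x^2$ as well; but under the assignment $x\mapsto 1$, $y\mapsto a$ the left side evaluates to $a$ while the right side evaluates to $\infty$ in $S_7$. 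Hence $S_7\notin\mathsf{V}(M_2)\vee{\bf N}$, so the containment $\mathsf{V}(M_2)\vee{\bf N}\subseteq\mathsf{V}(S_7)$ is strict. This route has the virtue of being fully self-contained and avoids appealing to any form of J\'onsson's lemma.

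For the second part, let $\mathcal{V}$ be an arbitrary proper subvariety of $\mathsf{V}(S_7)$. By Birkhoff's theorem $\mathcal{V}$ is generated by its subdirectly irreducible members, each of which is a subdirectly irreducible member of $\mathsf{V}(S_7)$ and hence, by Theorem~\ref{thm24101201}, is either trivial, lies in ${\bf N}$, is isomorphic to $M_2$, or contains a copy of $S_7$. The last possibility is excluded: were some subdirectly irreducible $S\in\mathcal{V}$ to contain a copy of $S_7$, then $S_7\in\mathcal{V}$ by closure under subalgebras, forcing $\mathcal{V}=\mathsf{V}(S_7)$ and contradicting properness. Thus every subdirectly irreducible member of $\mathcal{V}$ already lies in $\mathsf{V}(M_2)\vee{\bf N}$, whence $\mathcal{V}\subseteq\mathsf{V}(M_2)\vee{\bf N}$.

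Combining the two parts finishes the proof: any subvariety strictly above $\mathsf{V}(M_2)\vee{\bf N}$ cannot be proper and so must equal $\mathsf{V}(S_7)$, giving maximality; and any maximal subvariety, being proper, is contained in the proper variety $\mathsf{V}(M_2)\vee{\bf N}$ and hence equals it, giving uniqueness. The genuine mathematical content sits entirely in Theorem~\ref{thm24101201}; once that classification is in hand the present statement is essentially a bookkeeping argument. The only point I expect to require care is the verification that the separating identity holds throughout ${\bf N}$, where one must remember to apply the defining identity \eqref{id24101702} to the subterm $y^2$ and not merely to $y$.
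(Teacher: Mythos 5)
Your proof is correct and follows essentially the same route as the paper: exhibit an identity valid in $\mathsf{V}(M_2)\vee{\bf N}$ but failing in $S_7$ to get properness (the paper uses $x^2+y\approx x^2y$ where you use $x^2y\approx x^2y^2$; both work), and then invoke Theorem~\ref{thm24101201} via Birkhoff's subdirect decomposition for the containment of every proper subvariety. Your write-up merely spells out the details that the paper leaves implicit.
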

\begin{proof}
It is easy to see that
the identity $x^2+y\approx x^2y$ holds in $\mathsf{V}(M_2) \vee {\bf N}$,
but is not satisfied by $\mathsf{V}(S_7)$. This shows that $\mathsf{V}(M_2) \vee {\bf N}$ is
a proper subvariety of $\mathsf{V}(S_7)$. On the other hand, it follows from Theorem \ref{thm24101201} that
every proper subvariety of $\mathsf{V}(S_7)$ is contained in $\mathsf{V}(M_2) \vee {\bf N}$.
Thus ${\bf M}\vee {\bf N}$ is the unique maximal subvariety of $\mathsf{V}(S_7)$.
\end{proof}

\begin{pro}\label{pro24101830}
$\mathsf{V}(M_2, S_c(ab))$ is finitely based.
More precisely, $\mathsf{V}(M_2, S_c(ab))$ is the subvariety of the variety ${\bf F}$
determined by the identities $(\ref{id1})$, $(\ref{id2})$ and
\begin{equation}\label{id24101850}
x_1x_2x_3 \approx x_1^2+x_2^2+x_3^2.
\end{equation}
\end{pro}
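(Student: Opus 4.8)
The plan is to prove the two stated claims together: that $\mathsf{V}(M_2, S_c(ab))$ is finitely based, and specifically that it coincides with the subvariety $\mathcal{W}$ of $\mathbf{F}$ axiomatized by \eqref{id1}, \eqref{id2} and \eqref{id24101850}. Since $\mathsf{V}(M_2, S_c(ab)) \subseteq \mathsf{V}(S_7) \subseteq \mathbf{F}$ and $S_7$ satisfies \eqref{id1} and \eqref{id2}, the inclusion $\mathsf{V}(M_2, S_c(ab)) \subseteq \mathcal{W}$ reduces to a direct verification that both generators $M_2$ and $S_c(ab)$ satisfy \eqref{id24101850}; this is a routine finite check (in both algebras every triple product is $0=\infty$, while the right-hand side $x_1^2 + x_2^2 + x_3^2$ is forced to the top as soon as the squares disagree or equals a single idempotent otherwise). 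The substance of the proposition is the reverse inclusion $\mathcal{W} \subseteq \mathsf{V}(M_2, S_c(ab))$.

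For the reverse inclusion I would argue via subdirectly irreducible members. By Lemma~\ref{lem24121301} every subdirectly irreducible member of $\mathbf{F}$ is a flat semiring, so it suffices to show that every subdirectly irreducible flat semiring $S \in \mathcal{W}$ lies in $\mathsf{V}(M_2, S_c(ab))$. Since $S$ satisfies \eqref{id1} and \eqref{id2}, Remark~\ref{re24101801} lets me apply the structural conclusion of Theorem~\ref{thm24101201}: the least nonzero multiplicative ideal $I$ has exactly two elements, $E(S)$ has at most two elements, and $S$ is either in $\mathbf{N}$, isomorphic to $M_2$, or contains a copy of $S_7$. The key extra leverage from \eqref{id24101850} is that it forces all products of length $3$ to be highly degenerate: evaluating \eqref{id24101850} shows that any nonzero product $abc$ must equal $a^2 + b^2 + c^2$, which in a flat semiring collapses unless the three squares coincide. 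The plan is to use this to rule out the case where $S$ contains a copy of $S_7$ (which carries a genuine nonzero length-$3$ product $\mathbf{a}$ that is \emph{not} an idempotent, contradicting \eqref{id24101850}), and to show that in the $\mathbf{N}$ case the nilpotency degree is at most $2$, i.e. all products of length $3$ vanish. Thus every subdirectly irreducible member of $\mathcal{W}$ is isomorphic to $M_2$, or lies in $\mathbf{N}_3$; and by Corollary~\ref{co241019001} together with the identification $\mathsf{V}(S_c(a_1a_2)) = \mathbf{N}_3$ used in its proof, such members lie in $\mathsf{V}(S_c(ab))$ or $\mathsf{V}(S_c(a))$.

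Assembling this, every subdirectly irreducible member of $\mathcal{W}$ lies in $\mathsf{V}(M_2) \vee \mathsf{V}(S_c(ab)) = \mathsf{V}(M_2, S_c(ab))$, and since a variety is generated by its subdirectly irreducibles, $\mathcal{W} \subseteq \mathsf{V}(M_2, S_c(ab))$, completing the equality. The main obstacle I anticipate is the bookkeeping in the $\mathbf{N}$ case: I must convert the semiring identity \eqref{id24101850} into the purely multiplicative statement that products of length $3$ are zero, and confirm that no subdirectly irreducible nil example of nilpotency degree $3$ or higher survives inside $\mathcal{W}$. This requires carefully checking that if some length-$3$ product were nonzero in a flat nil semiring, then \eqref{id24101850} would equate it with an idempotent sum and thereby force a contradiction with nilpotency (all idempotents being $0$ in $\mathbf{N}$). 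Once that reduction is secured, the rest follows formally from the already-established interval description of $[\mathsf{V}(S_c(a)), \mathbf{N}]$ and the minimal-variety analysis.
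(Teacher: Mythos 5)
Your proposal follows essentially the same route as the paper's proof: verify that the generators $M_2$ and $S_c(ab)$ satisfy \eqref{id1}, \eqref{id2} and \eqref{id24101850}, then classify the subdirectly irreducible flat semirings of the axiomatized variety via Remark~\ref{re24101801}, excluding the $S_7$ case because $S_7$ fails \eqref{id24101850} and placing the nil case inside $\mathsf{V}(S_c(ab))$. One minor slip in your parenthetical verification: in $M_2$ the triple product $1\cdot 1\cdot 1=1\neq\infty$, so it is not true that every length-$3$ product is the top element there, although the identity \eqref{id24101850} does still hold in $M_2$.
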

\begin{proof}
It is easy to check that both $M_2$ and $S_c(ab)$ satisfy the identities~\eqref{id1},~\eqref{id2} and~\eqref{id24101850}.
In the remainder we shall show that every subdirectly irreducible flat semiring that satisfies \eqref{id1}, \eqref{id2} and~\eqref{id24101850} either is isomorphic to $M_2$ or lies in $\mathsf{V}(S_c(ab))$.
Let~$S$ be such an algebra. Then by Remark~\ref{re24101801} $S$ satisfies~\eqref{id24101702}, or is isomorphic to $M_2$,
or contains a copy of $S_7$.  The possibility of containing a copy of $S_7$ is eliminated because $S_7$ fails  law~\eqref{id24101850}.
If $S$ is isomorphic to $M_2$, then it is obvious that $S$ lies in $\mathsf{V}(M_2, S_c(ab))$.
If $S$ satisfies~\eqref{id24101702}, then by \cite[Proposition~3.2]{rjzl}~$S$ is a member of $\mathsf{V}(S_c(ab))$
and so it lies in $\mathsf{V}(M_2, S_c(ab))$.
This proves the required result.
\end{proof}

The following theorem, which is the main result of this section,
answers the finite basis problem for all subvarieties of $\mathsf{V}(S_7)$.
\begin{thm}
$\mathsf{V}(S_7)$ has exactly $6$ finitely based subvarieties:
the trivial variety ${\bf T}$, $\mathsf{V}(M_2)$, $\mathsf{V}(S_c(a))$, $\mathsf{V}(S_c(ab))$,
$\mathsf{V}(M_2, S_c(a))$ and $\mathsf{V}(M_2, S_c(ab))$.
\end{thm}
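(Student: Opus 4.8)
The plan is to combine two facts already established in this section: every subvariety of $\mathsf{V}(S_7)$ containing $S_c(abc)$ is nonfinitely based (Corollary~\ref{cor24112501}), so any finitely based subvariety must \emph{avoid} $S_c(abc)$; and the structural decomposition of the subvariety lattice via the map $\varphi$ together with Propositions~\ref{pro24101901}, \ref{pro24101811} and Corollary~\ref{co241019001}, which tell us exactly which subvarieties avoid $S_c(abc)$. So the strategy is to enumerate all subvarieties not containing $S_c(abc)$ and check finite basability for each.

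First I would invoke the disjoint-union decomposition of $\mathcal{L}(\mathsf{V}(S_7))$ into the four fibres $\varphi^{-1}(\mathbf{T})$, $\varphi^{-1}(\mathsf{V}(M_2))$, $\varphi^{-1}(\mathsf{V}(S_c(a)))$ and $\varphi^{-1}(\mathsf{V}(M_2, S_c(a)))$ from Proposition~\ref{pro24101840}. The first two fibres contribute exactly $\mathbf{T}$ and $\mathsf{V}(M_2)$, which are finitely based (both are small well-understood varieties). For the third fibre, $\varphi^{-1}(\mathsf{V}(S_c(a)))=[\mathsf{V}(S_c(a)), \mathbf{N}]$, Corollary~\ref{co241019001} says any member avoiding $S_c(abc)$ is $\mathsf{V}(S_c(a))$ or $\mathsf{V}(S_c(ab))$; both are finitely based by the cited results of Shao--Ren and Ren et al.\ (the latter via Proposition~\ref{pro24101830}). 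For the fourth fibre, $[\mathsf{V}(M_2, S_c(a)), \mathsf{V}(S_7)]$, Proposition~\ref{pro24101811} exhibits it as the union of $\{\mathsf{V}(S_7), \mathsf{V}(M_2, S_c(a)), \mathsf{V}(M_2, S_c(ab))\}$ with the interval $[\mathsf{V}(M_2, S_c(abc)), \mathsf{V}(M_2)\vee\mathbf{N}]$; the three named varieties avoiding $S_c(abc)$ are $\mathsf{V}(M_2, S_c(a))$ and $\mathsf{V}(M_2, S_c(ab))$ (finitely based by Shao--Ren and Proposition~\ref{pro24101830}), while everything in the interval contains $S_c(abc)$ and is therefore nonfinitely based.

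The remaining step is to confirm that nothing outside these six is finitely based, which reduces to verifying that every subvariety \emph{not} on the list contains $S_c(abc)$: the third-fibre members other than the two listed lie in $[\mathsf{V}(S_c(abc)), \mathbf{N}]$ and the fourth-fibre members other than the two listed lie in $[\mathsf{V}(M_2, S_c(abc)), \mathsf{V}(S_7)]$, so all contain $S_c(abc)$ and fall under Corollary~\ref{cor24112501}. I would then collect the six survivors and note that each has an explicit finite basis already recorded. The main obstacle is bookkeeping rather than mathematics: one must be careful that the four fibres genuinely partition the lattice and that the "avoids $S_c(abc)$" dichotomy is applied correctly within each fibre, so that no finitely based variety is overlooked and no nonfinitely based one is mistakenly admitted. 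Since all the hard analytic content is packaged in the preceding propositions, the theorem follows by assembling these pieces.
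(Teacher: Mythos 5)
Your proposal is correct and follows essentially the same route as the paper: both reduce to the dichotomy of whether a subvariety contains $S_c(abc)$ (invoking Corollary~\ref{cor24112501} for the nonfinitely based case) and then enumerate the remaining subvarieties via Proposition~\ref{pro24101840}, Corollary~\ref{co241019001} and Proposition~\ref{pro24101811}, citing the known finite bases for the six survivors. The paper's proof is just a more compressed version of the same fibre-by-fibre bookkeeping you carry out.
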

\begin{proof}
Let $\mathcal{V}$ be an arbitrary subvariety of $\mathsf{V}(S_7)$. If $\mathcal{V}$ contains $S_c(abc)$, then by Corollary \ref{cor24112501} $\mathcal{V}$ is nonfinitely based.
If $\mathcal{V}$ does not contain $S_c(abc)$, then
it follows from
Proposition \ref{pro24101840}, Corollary \ref{co241019001} and Proposition \ref{pro24101811} that
$\mathcal{V}$ is equal to one of the varieties ${\bf T}$, $\mathsf{V}(M_2)$, $\mathsf{V}(S_c(a))$, $\mathsf{V}(S_c(ab))$,
$\mathsf{V}(M_2, S_c(a))$ and $\mathsf{V}(M_2, S_c(ab))$.
From \cite{rjzl, sr} and Proposition \ref{pro24101830}
we know that these $6$ varieties are all finitely based.
This completes the proof.
\end{proof}

As a consequence, we have the following corollaries.
\begin{cor}
Let $\mathcal{V}$ be a subvariety of $\mathsf{V}(S_7)$.
Then $\mathcal{V}$ is finitely based if and only if $\mathcal{V}$ does not contain $S_c(abc)$.
\end{cor}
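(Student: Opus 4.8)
The plan is to read this off directly from the classification theorem just proved, together with Corollary~\ref{cor24112501}; indeed the two implications of the corollary simply repackage the two cases already treated in the proof of that theorem, so no new work is required. I would prove each direction separately.

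For the forward direction I would argue by contraposition. Suppose $\mathcal{V}$ is a subvariety of $\mathsf{V}(S_7)$ that \emph{does} contain $S_c(abc)$. Since $S_7$ is a subalgebra of $S_7^0$ we have $\mathsf{V}(S_7)\subseteq\mathsf{V}(S_7^0)$, so $\mathcal{V}$ lies in the interval $[\mathsf{V}(S_c(abc)), \mathsf{V}(S_7^0)]$. Corollary~\ref{cor24112501} then tells us that $\mathcal{V}$ is nonfinitely based. Contrapositively, every finitely based subvariety of $\mathsf{V}(S_7)$ fails to contain $S_c(abc)$.

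For the reverse direction I would invoke the structural analysis behind the preceding theorem. If $\mathcal{V}$ does not contain $S_c(abc)$, then Proposition~\ref{pro24101840}, Corollary~\ref{co241019001} and Proposition~\ref{pro24101811} jointly force $\mathcal{V}$ to be one of the six varieties ${\bf T}$, $\mathsf{V}(M_2)$, $\mathsf{V}(S_c(a))$, $\mathsf{V}(S_c(ab))$, $\mathsf{V}(M_2, S_c(a))$ and $\mathsf{V}(M_2, S_c(ab))$. Each of these is finitely based by \cite{rjzl, sr} and Proposition~\ref{pro24101830}, so $\mathcal{V}$ is finitely based, completing the equivalence.

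I expect no genuine obstacle here: the entire content has already been established in the theorem, which shows precisely that the six listed varieties are exactly the subvarieties of $\mathsf{V}(S_7)$ avoiding $S_c(abc)$, while every subvariety containing $S_c(abc)$ is nonfinitely based. The only point deserving a line of care is making the interval containment explicit so that Corollary~\ref{cor24112501}, stated for $[\mathsf{V}(S_c(abc)), \mathsf{V}(S_7^0)]$, may be applied to subvarieties of the smaller variety $\mathsf{V}(S_7)$; this is immediate from $\mathsf{V}(S_7)\subseteq\mathsf{V}(S_7^0)$.
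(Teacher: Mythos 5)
Your proposal is correct and is essentially identical to the paper's argument: the paper derives this corollary immediately from the preceding theorem, whose proof uses exactly the same two-case split (Corollary~\ref{cor24112501} for subvarieties containing $S_c(abc)$, and Proposition~\ref{pro24101840}, Corollary~\ref{co241019001}, Proposition~\ref{pro24101811} together with the finite bases from \cite{rjzl,sr} and Proposition~\ref{pro24101830} for the remaining six varieties). Your extra remark about the containment $\mathsf{V}(S_7)\subseteq\mathsf{V}(S_7^0)$ is a correct and worthwhile explicit justification of a step the paper leaves implicit.
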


\begin{cor}\label{cor25011801}
$\mathsf{V}(S_c(abc))$ is the unique minimal nonfinitely based subvariety of $\mathsf{V}(S_7)$.
\end{cor}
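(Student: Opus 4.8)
The plan is to read this off directly from the two preceding corollaries, since by this point all the real work has been done. Recall the immediately preceding corollary states that a subvariety of $\mathsf{V}(S_7)$ is finitely based if and only if it does not contain $S_c(abc)$. The strategy is simply to show that $\mathsf{V}(S_c(abc))$ is nonfinitely based and lies beneath every nonfinitely based subvariety, which together say it is the least, hence unique minimal, such variety.

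First I would check that $\mathsf{V}(S_c(abc))$ is itself nonfinitely based. This is immediate: $\mathsf{V}(S_c(abc))$ trivially contains $S_c(abc)$, so the preceding corollary forbids it from being finitely based. (Equivalently, one may invoke Corollary~\ref{cor24112501}, noting that $\mathsf{V}(S_c(abc))$ lies in the interval $[\mathsf{V}(S_c(abc)),\mathsf{V}(S_7^0)]$ because $S_7$, and therefore $S_c(abc)$, embeds into $S_7^0$.)

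Next I would let $\mathcal{V}$ be an arbitrary nonfinitely based subvariety of $\mathsf{V}(S_7)$ and apply the contrapositive of the preceding corollary: since $\mathcal{V}$ is not finitely based, it must contain $S_c(abc)$, whence $\mathsf{V}(S_c(abc)) \subseteq \mathcal{V}$. Thus $\mathsf{V}(S_c(abc))$ is contained in every nonfinitely based subvariety.

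Finally I would combine the two observations. Being nonfinitely based and contained in every nonfinitely based subvariety, $\mathsf{V}(S_c(abc))$ is the least element of the collection of nonfinitely based subvarieties of $\mathsf{V}(S_7)$; in particular any variety strictly below it omits $S_c(abc)$ and so is finitely based, which establishes minimality, while the containment property gives uniqueness. There is no genuine obstacle here: the entire substance of the result is already packaged into the finite-basis classification theorem and its first corollary, and this statement is merely the lattice-theoretic reformulation of ``finitely based $\iff$ does not contain $S_c(abc)$.''
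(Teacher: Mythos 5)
Your proposal is correct and matches the paper's (implicit) argument exactly: the paper derives this corollary, without further proof, from the preceding corollary that a subvariety of $\mathsf{V}(S_7)$ is finitely based if and only if it does not contain $S_c(abc)$, which is precisely the equivalence you exploit to show $\mathsf{V}(S_c(abc))$ is the least nonfinitely based subvariety. Nothing is missing.
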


\begin{remark}
Corollary \ref{cor25011801} improves \cite[Theorem 3.6]{rjzl},
which states that the variety $\mathsf{V}(S_c(abc))$ is a minimal nonfinitely based subvariety of $\mathsf{V}(S_7)$.
\end{remark}

\setlength{\unitlength}{0.8cm}
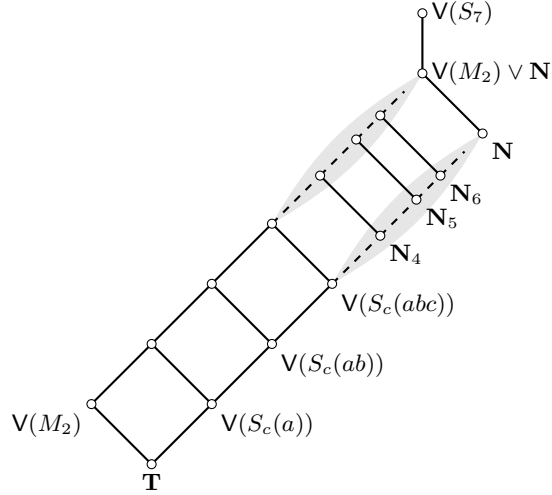
\begin{figure}[ht]
\begin{tikzpicture}[every node/.style={font=\small}, scale=0.8]

\path [fill = gray!20] (7,3) [out = 60,in = 210] to (9.5,5.5)  [out = -120,in = 30] to (7,3);
\path [fill = gray!20] (6,4) [out = 60,in = 210] to (8.5,6.5)  [out = -120,in = 30] to (6,4);

\foreach \x/\y in { 
        4/0, 5/1, 6/2, 7/3, 7.8/3.8, 8.4/4.4, 8.8/4.8, 9.5/5.5
    } {
        \draw[thick] (\x,\y) -- (\x-1,\y+1);
    }

\foreach \x/\y in { 
        3/1, 4/0, 4/2, 5/1, 6/2, 5/3,
    } {
        \draw[thick] (\x,\y) -- (\x+1,\y+1);
    }

\foreach \x/\y/\xx/\yy in { 
        7/3/7.8/3.8, 7.8/3.8/8.4/4.4, 8.4/4.4/8.8/4.8, 8.8/4.8/9.2/5.2, 6/4/6.8/4.8, 6.8/4.8/7.4/5.4, 7.4/5.4/7.8/5.8, 7.8/5.8/8.2/6.2
    } {
        \draw [thick, dashed] (\x,\y) to (\xx,\yy);
    }
 \draw[thick] (8.5,6.5) -- (8.5,7.5); 

    \foreach \x/\y/\label in {
        5/1/$\mathsf{V}(S_c(a))$,
        6/2/$\mathsf{V}(S_c(ab))$,
        7/3/$\mathsf{V}(S_c(abc))$,
        9.5/5.5/${\bf N}$,
        4/2/,
        5/3/,
        6/4/
    } {
        \node[circle, draw, fill=white, inner sep=1.2pt] at (\x,\y) {};
        \node[below right] at (\x,\y) {\label};
    }
    \node[circle, draw, fill=white, inner sep=1.2pt] at (3,1) {};
        \node[below left] at (3,1) {$\mathsf{V}(M_2)$};
    \node[circle, draw, fill=white, inner sep=1.2pt] at (4,0) {};
        \node[below] at (4,0) {$\mathbf{T}$};
    \node[circle, draw, fill=white, inner sep=1.2pt] at (8.5,6.5) {};
        \node[right] at (8.5,6.5) {$\mathsf{V}(M_2)\vee{\bf N}$};
    \node[circle, draw, fill=white, inner sep=1.2pt] at (8.5,7.5) {};
        \node[right] at (8.5,7.5) {$\mathsf{V}(S_7)$};

\foreach \x/\y/\label in {
        7.8/3.8/${\bf N}_4$,
        8.4/4.4/${\bf N}_5$,
        8.8/4.8/${\bf N}_6$,
        6.8/4.8/,
        7.4/5.4/,
        7.8/5.8/
    } {
        \node[circle, draw, fill=white, inner sep=1.2pt] at (\x,\y) {};
        \node[below right] at (\x,\y) {\label};
    }

\end{tikzpicture}
%
%
%
%
%
%
%
%
%
%
%
%
%
\caption{The subvariety lattice of $\mathsf{V}(S_7)$.}\label{figure1}
\end{figure}

The reader may find it useful to consult Figure~\ref{figure1}, which gives a schematic of the subvariety lattice of $\mathsf{V}(S_7)$.  The statements around the shaded region will be established in the next section, but the remaining structure has been established at this point.
In the figure, solid lines indicate the usual covering relation while dashed lines indicate variety containment.
We will show that the two shaded intervals are isomorphic and have both height and width equal to~$2^{\aleph_0}$:
moreover, this is true for each of the intervals $[\mathsf{V}(S_c(a_1\dots a_k)), {\bf N}_{k+1}]$, $k \geq 3$.
We will also show that the sequence of varieties
\[
\mathsf{V}(S_c(a_1a_2a_3))\subset \mathsf{V}(S_c(a_1a_2a_3a_4))\subset \cdots \subset \mathsf{V}(S_c(a_1a_2\cdots a_k))\subset \cdots
\]
provides a different stratification of~${\bf N}$, not shown in Figure~\ref{figure1}.

\section{Partition systems and Kneser hypergraphs}\label{sec:blockhypergraph}
Following Jackson and Lee~\cite{jaclee}, an algebra is said to be \emph{of type $2^{\aleph_0}$} if it generates a variety with $2^{\aleph_0}$ subvarieties.
It is perhaps surprising that a finite algebra can be of type~$2^{\aleph_0}$, but there are now many examples known, and the property is trivially inherited by any algebra whose variety contains an algebra of type ~$2^{\aleph_0}$.
Within semigroups and monoids the property has received particular attention, starting with Trahtman~\cite{tra};
two very recent contributions are Glasson~\cite{gla} and Gusev~\cite{gus}, who show that $M(abba)$ and $M(aabb)$, respectively are of type $2^{\aleph_0}$ in the monoid signature (these are the noncommutative versions of the construction, and the results concern the multiplicative reduct of the definition in the current paper).
The only finite semiring example that has been established so far is in Ren et al.~\cite[Example~5.7]{rjzl} where it is shown that the ai-semiring $M(abacdc)$ generates a semiring variety with continuum many subvarieties.
In this section we significantly enhance this result by proving that $S_7$ (which is isomorphic to $M(a)$, as a semiring) has type $2^{\aleph_0}$, as well as a number of  properties of the variety lattice in the shaded region of Figure~\ref{figure1}.

Our approach again reinforces the strong connection with hypergraphs and combinatorial properties.  We introduce the notion of a ``block hypergraph'', which is closely related to the notion of a Kneser hypergraph,
and which turns out to precisely capture subdirectly irreducible algebras in $\mathsf{V}(S_7)$.
Recall that the \emph{$r$-regular Kneser hypergraph} $\KG^r(n,k)$ is the hypergraph consisting of all $k$-element subsets of $\{1,\dots,n\}$ with hyperedges consisting of all $r$-sets of pairwise disjoint $k$-element subsets.
Kneser graphs ($r=2$) have been very heavily studied since Lov\'asz's resolution of Kneser's conjecture~\cite{lov}, showing that the chromatic number of $\KG^2(n,k)$ is precisely $n-2k+2$ (provided $n\geq 2k$, so that there is at least one edge).
Various extensions of Lov\'asz's result were established by Alon, Frankl and Lov\'asz~\cite{AFL} for hypergraphs and other combinatorial objects.

Consider a finite set $I$, and a family $\mathscr{F}$ of nonempty subsets of $I$: in other words, $(I, \mathscr{F})$ is any hypergraph.
We can form a new hypergraph $\mathbb{H}_\mathscr{F}$ from $\mathscr{F}$ by taking the vertices to be the elements of $\mathscr{F}$ (the hyperedges of $(I, \mathscr{F})$) and defining the hyperedges of $\mathbb{H}_\mathscr{F}$ to be those sets $\{A_1, \dots, A_k\}$ whenever $A_1, \dots, A_k\in\mathscr{F}$ are pairwise disjoint with union equal to $I$.
In other words, the hyperedges correspond to partitionings of $I$ by members of $\mathscr{F}$.
From the semiring perspective we will primarily be interested in the particular case where
\begin{enumerate}
\item[(Ha)] every $A\in \mathscr{F}$ lies within a hyperedge of $\mathbb{H}_\mathscr{F}$, and
\item[(Hb)] no element of $\mathscr{F}$ is the disjoint union of a set of smaller members of $\mathscr{F}$.
\end{enumerate}
If (Ha) and (Hb) hold then we refer to $\mathscr{F}$ as a \emph{partition system}, though the conditions imposed are really only conveniences: the property (Ha) will ensure that a certain construction is subdirectly irreducible (but a semiring is still definable) and the property (Hb) ensures that the elements of $\mathscr{F}$ are a minimal generating set (but a semiring is still definable and  the ``indecomposable'' elements of $\mathscr{F}$ will be a minimal generating set).
We could also allow infinite versions of the block hypergraph, requiring finite partitions of infinite sets, but we do not need it, as we are considering locally finite varieties, which are then generated by their finite subdirectly irreducible members.
A hypergraph isomorphic to the hypergraph $\mathbb{H}_\mathscr{F}$ formed from the blocks of a partition system $\mathscr{F}$ in this way will be called a \emph{block hypergraph}.
\begin{example}\label{eg:kl}
For any $k,\ell\geq 1$, the family $\mathscr{F}_{k,\ell}$ of $k$-element subsets of $\{1,2,\dots,k\ell\}$ is a partition system in which every hyperedge in $\mathbb{H}_{\mathscr{F}_{k,\ell}}$ has cardinality~$\ell$.  This hypergraph coincides with the \emph{Kneser hypergraph} $\KG^\ell(k\ell,k)$.
\end{example}
Note that the block hypergraph $\mathbb{H}_{\mathscr{F}_{1,\ell}}$ is the hypergraph consisting of a single $\ell$-element hyperedge.
The partition system $\mathscr{F}_{2,\ell}$ is just the complete graph on $2\ell$ vertices, but the block hypergraph $\mathbb{H}_{\mathscr{F}_{2,\ell}}$ formed over it is an $\ell$-uniform hypergraph.
When $\ell=2$ then $\mathbb{H}_{\mathscr{F}_{2,2}}$ is just a disjoint union of single edges, but when $\ell=3$ the hypergraph then the hypergraph is more complicated.

Let $\mathscr{F}$ be a partition system,
and let $\overline{\mathscr{F}}$ denote the closure of $\mathscr{F}$ under taking disjoint unions $\cup_{1\leq i\leq k}A_i$ of members of $\mathscr{F}$,
provided that $\{A_1,\dots,A_k\}$ is a subset of a hyperedge of $\mathbb{H}_\mathscr{F}$.
So, a union $\cup_{1\leq i\leq k}A_i$ is in $\overline{\mathscr{F}}$ provided there are $A_{k+1},\dots,A_\ell$ such that $A_{1},\dots,A_\ell$  partition~$I$.
We may define a flat semiring $S_{\mathbb{H}_\mathscr{F}}$ on $\overline{\mathscr{F}}\cup\{\infty\}$ by giving $\infty$ its usual additive and multiplicative properties in a flat semiring, and defining multiplication between elements of $\overline{\mathscr{F}}$ to be disjoint union, if it sits within~$\overline{\mathscr{F}}$, or $\infty$ otherwise:
\[
A\cdot B:=\begin{cases}
A\cup B&\text{ if $A\cap B=\varnothing$ and $A\cup B\in \overline{\mathscr{F}}$}\\
\infty&\text{ otherwise}.
\end{cases}
\]
The reader can verify directly that this produces a flat semiring, however it follows from Lemma~\ref{lem:subsemi}.
The restriction to disjoint unions that can be extended to a full hyperedge is not strictly necessary either, and arises as an intermediate case in the proof, however the semiring obtained is not subdirectly irreducible in this case.
\begin{lem}\label{lem:subsemi}
For any finite nonempty set $I$, and any partition system $\mathscr{F}$ on $I$, the semiring $S_{\mathbb{H}_\mathscr{F}}$ is isomorphic to a quotient of a subsemiring of $S_c(a_1a_2\cdots a_{|I|})$.
\end{lem}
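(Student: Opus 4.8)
The plan is to realize $S_{\mathbb{H}_\mathscr{F}}$ as a Rees-type quotient of the subsemiring of $S_c(a_1 a_2 \cdots a_{|I|})$ generated by $\mathscr{F}$. First I would set $n = |I|$ and identify $I$ with $\{1,\dots,n\}$, so that $S_c(a_1 \cdots a_n)$ becomes the flat semiring whose non-$\infty$ elements are the nonempty subsets of $I$, with product $A \cdot B = A \cup B$ when $A \cap B = \varnothing$ and $A \cdot B = \infty$ otherwise (two distinct subsets always sum to $\infty$, since the semiring is flat). Under this identification each $A \in \mathscr{F}$ is a subset of $I$, and I would take $T$ to be the subsemiring generated by $\mathscr{F}$. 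Since products of members of $\mathscr{F}$ are exactly their disjoint unions (or $\infty$) and the additive reduct is flat, $T$ consists precisely of $\infty$ together with the set $P$ of \emph{packings}, i.e.\ those subsets of $I$ expressible as a disjoint union of members of $\mathscr{F}$. Note that $\overline{\mathscr{F}} \subseteq P$, with the elements of $P \setminus \overline{\mathscr{F}}$ being the packings admitting no decomposition that extends to a partition of $I$.

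Next I would collapse the unwanted packings. Put $N := (P \setminus \overline{\mathscr{F}}) \cup \{\infty\}$ and let $\theta$ be the equivalence on $T$ whose only nontrivial class is $N$ (so $\theta$ is the identity on $\overline{\mathscr{F}}$). Mapping $B \mapsto B$ for $B \in \overline{\mathscr{F}}$ and the class $N$ to $\infty$ gives a bijection $T/\theta \to S_{\mathbb{H}_\mathscr{F}}$, and a direct check of the two defining cases of multiplication (disjoint with union in $\overline{\mathscr{F}}$, versus otherwise) shows it respects products; the additive part is automatic, as both semirings are flat. Thus it suffices to prove that $\theta$ is a congruence. Because the additive reduct is flat and $\infty \in N$, additive compatibility is immediate (a sum of two elements is one of them or $\infty$, hence lies in $N$ as soon as one summand does), so the whole statement reduces to showing that $N$ is a multiplicative ideal of $T$, i.e.\ $x \cdot y \in N$ whenever $x \in N$ and $y \in T$.

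The crux --- and the one genuinely combinatorial step --- is therefore the claim: if $x,y$ are disjoint packings with $x \cup y \in \overline{\mathscr{F}}$, then already $x \in \overline{\mathscr{F}}$. Granting this, its contrapositive shows that $x \in P \setminus \overline{\mathscr{F}}$ and $y \in P$ disjoint from $x$ force $x \cup y \in P \setminus \overline{\mathscr{F}} \subseteq N$, while nondisjoint products and products with $\infty$ land in $\infty \in N$; hence $N$ is an ideal and the result follows. To prove the claim I would fix a decomposition $x = A_1 \cup \dots \cup A_p$ of $x$ into members of $\mathscr{F}$ and a decomposition $y = B_1 \cup \dots \cup B_q$ of $y$, and from $x \cup y \in \overline{\mathscr{F}}$ extract a partition of the complement $I \setminus (x \cup y)$ into members $D_1, \dots, D_s \in \mathscr{F}$. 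The point is that one need not use the (possibly boundary-crossing) decomposition witnessing $x \cup y \in \overline{\mathscr{F}}$ itself: the families $\{A_i\}$, $\{B_j\}$ and $\{D_k\}$ are pairwise disjoint members of $\mathscr{F}$ whose union is $I$, so together they form a partition of $I$, and this partition contains $\{A_1, \dots, A_p\}$ as a sub-collection. Hence $x$'s own decomposition extends to a hyperedge of $\mathbb{H}_\mathscr{F}$, giving $x \in \overline{\mathscr{F}}$. I expect this recombination argument to be the main obstacle, since the subtlety is precisely that the decomposition exhibiting $x \cup y \in \overline{\mathscr{F}}$ may mix the blocks of $x$ and $y$; the resolution is to discard it and rebuild the partition from the separate decompositions of $x$, $y$ and the complement.
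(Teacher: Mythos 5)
Your proposal is correct and follows essentially the same route as the paper: identify $S_c(a_1\cdots a_{|I|})$ with the nonempty subsets of $I$ under disjoint union, take the subsemiring generated by $\mathscr{F}$, and pass to the Rees-type quotient collapsing everything outside $\overline{\mathscr{F}}$ to $\infty$. The paper describes the collapsed set as the elements that fail to multiplicatively divide the full word $a_1\cdots a_{|I|}$ (which makes the ideal property automatic), and your recombination argument is precisely the explicit verification that this set equals $P\setminus\overline{\mathscr{F}}$ and is closed under multiplication, so you have merely filled in a step the paper leaves implicit.
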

\begin{proof}
Let ${\bf w}$ denote the word $a_1a_2\cdots a_{|I|}$.  It is useful to rename the elements of~$I$ as the set $\{a_1,\dots,a_{|I|}\}$ and for any subset $A$ of $I$, let ${\bf w}_A$ denote $\prod_{a_i\in A}a_i$, a subword of ${\bf w}$ (so, a redundancy is that ${\bf w}={\bf w}_I$).
Then ${\bf w}_{A_1}\cdot {\bf w}_{A_2}\cdot\dots\cdot{\bf w}_{A_k}\neq \infty$ if and only if the family $A_1,\dots,A_k$ are pairwise disjoint, in which case it equals ${\bf w}_{A_1\cup\dots \cup A_k}$.
Let $T_\mathscr{F}$ be the subsemiring generated by the words ${\bf w}_A$ for $A\in \mathscr{F}$.  (The semiring $T_\mathscr{F}$ corresponds to the alternative definition that does not restrict to disjoint unions that can be extended to a full hyperedge.)
Then $S_{\mathbb{H}_\mathscr{F}}$ arises by collapsing (to $\infty$) all elements that do not multiplicatively divide~${\bf w}$.
A product $\prod_{1\leq i\leq k}{\bf w}_{A_i}$ remains uncollapsed precisely when there are $A_{k+1},\dots,A_\ell$ such that $A_{1},\dots,A_\ell$ are a partition of $I$ (so that $(\prod_{1\leq i\leq k}{\bf w}_{A_i})\cdot (\prod_{k+1\leq i\leq \ell}{\bf w}_{A_i}) = {\bf w}$), matching exactly the definition of non-$\infty$ multiplication in $S_{\mathbb{H}_\mathscr{F}}$ by way of disjoint unions within $\overline{\mathscr{F}}$.
\end{proof}
\begin{remark}
In the definition of  a hypergraph semiring in Section~\ref{sec:intro}, the restriction to $3$-uniform hypergraphs of girth at least $5$ enables a uniform description of the multiplication between generators: in particular, relating to conditions~(4) and~(5) of the definition.
In~\cite{jrz}, the definition also allows for $k$-uniform hypergraphs, for $k\geq 3$, though the girth restriction remains.
The semiring~$S_{\mathbb{H}_\mathscr{F}}$ is in a natural way the hypergraph semiring of the block hypergraph $\mathbb{H}_\mathscr{F}$, though now there may be variation to the size of the hyperedges, and there is no girth requirement.  Instead of conditions (4) and (5), the definition of multiplication is instead managed by the concrete representation in terms of partition systems and Lemma~\ref{lem:subsemi}.
In the case that a block hypergraph is $k$-uniform and girth at least $5$ (such as if it is a tree or a forest), then it is not hard to see that the two definitions coincide.
\end{remark}

A \emph{transversal} of a hypergraph is a subset of the vertices that intersects each hyperedge exactly once.
Let $\mathscr{T}$ be a family of transversals for a hypergraph $\mathbb{H}$.
Borrowing from point set topology terminology, we say that $\mathbb{H}$ is~\emph{$T_0$ with respect to~$\mathscr{T}$} if every vertex is contained in at least one transversal from $\mathscr{T}$, and for every pair of vertices $u\neq v$ there is a transversal in $\mathscr{T}$ selecting $u$ but not $v$ or a transversal selecting $v$ but not $u$.
The hypergraph $\mathbb{H}$ is said to be \emph{transversal complete} (with respect to $\mathscr{T}$) if whenever $\{v_1,\dots,v_k\}$ is a set of vertices with the property that every transversal from $\mathscr{T}$ intersects $\{v_1,\dots,v_k\}$ at precisely one vertex, then $\{v_1,\dots,v_k\}$ is a hyperedge.
The following lemma can be easily adjusted to accommodate the case where isolated vertices are allowed, but we will not need it.
\begin{lem}\label{lem:transversal}
The following are equivalent for a hypergraph $\mathbb{H}$ without isolated vertices.
\begin{itemize}
\item[$(1)$] $\mathbb{H}$ is a block hypergraph\up;

\item[$(2)$] $\mathbb{H}$ is $T_0$ and is transversal complete with respect to some family $\mathscr{T}$ of transversals.
\end{itemize}
When the conditions hold, then the hypergraph $\mathbb{H}$ is isomorphic to $\mathbb{H}_\mathscr{F}$ for a partition system $\mathscr{F}$ on the set $\mathscr{T}$.
\end{lem}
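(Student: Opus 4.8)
The plan is to prove the two implications of the equivalence separately, with the construction of the partition system $\mathscr{F}$ (in the direction $(2)\Rightarrow(1)$) being the heart of the argument. The direction $(1)\Rightarrow(2)$ should be the easier one: given a block hypergraph $\mathbb{H}_\mathscr{F}$ arising from a partition system $\mathscr{F}$ on a finite set $I$, I would take $\mathscr{T}$ to be the family of transversals induced by the elements of $I$ itself. Concretely, for each point $x\in I$, let $T_x$ be the set of vertices $A\in\mathscr{F}$ with $x\in A$; since every hyperedge of $\mathbb{H}_\mathscr{F}$ is a partition of $I$, each $T_x$ meets every hyperedge in exactly one block, so $T_x$ is a transversal. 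I would then verify $T_0$ (distinct vertices $A\neq B$ differ on some point $x\in I$, which gives a separating transversal, and condition (Ha) guarantees each vertex lies in a hyperedge hence in some $T_x$) and transversal completeness (a set of vertices meeting every $T_x$ exactly once is precisely a partition of $I$, i.e.\ a hyperedge).

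For the substantive direction $(2)\Rightarrow(1)$, I would use the family $\mathscr{T}$ as the underlying set $I$ of the partition system, exactly as the final sentence of the statement suggests. The idea is to encode each vertex $v$ of $\mathbb{H}$ as the set
\[
A_v:=\{T\in\mathscr{T}\mid v\in T\},
\]
the collection of transversals selecting $v$, and to set $\mathscr{F}:=\{A_v\mid v\text{ a vertex of }\mathbb{H}\}$. The $T_0$ condition is precisely what guarantees that $v\mapsto A_v$ is injective (so vertices correspond bijectively to members of $\mathscr{F}$) and that no $A_v$ is empty (every vertex lies in some transversal). The key computation is that for any hyperedge $e=\{v_1,\dots,v_k\}$ of $\mathbb{H}$, the sets $A_{v_1},\dots,A_{v_k}$ are pairwise disjoint with union all of $\mathscr{T}$: disjointness holds because a single transversal meets $e$ in exactly one vertex, and the union is everything because every transversal meets $e$ somewhere. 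Thus each hyperedge maps to a partition of $\mathscr{T}$ by members of $\mathscr{F}$, i.e.\ a hyperedge of $\mathbb{H}_\mathscr{F}$. Conversely, transversal completeness supplies the reverse inclusion: if $\{A_{v_1},\dots,A_{v_k}\}$ partitions $\mathscr{T}$, then every transversal meets $\{v_1,\dots,v_k\}$ exactly once, so $\{v_1,\dots,v_k\}$ is a hyperedge of $\mathbb{H}$. This establishes that $v\mapsto A_v$ is an isomorphism $\mathbb{H}\cong\mathbb{H}_\mathscr{F}$.

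The remaining obligation is to check that $\mathscr{F}$ is genuinely a \emph{partition system}, i.e.\ that conditions (Ha) and (Hb) hold. Condition (Ha), that every $A_v$ lies within a hyperedge, follows from the absence of isolated vertices: each $v$ belongs to some hyperedge of $\mathbb{H}$, which maps to a hyperedge of $\mathbb{H}_\mathscr{F}$ containing $A_v$. Condition (Hb), that no $A_v$ is a disjoint union of smaller members of $\mathscr{F}$, is where I expect the main obstacle to lie, and it likely requires a minimality adjustment: in general the raw family $\{A_v\}$ need not satisfy (Hb), so I anticipate the honest argument either invokes a genericity or minimality property built into $\mathscr{T}$, or passes to the subfamily of ``indecomposable'' members as foreshadowed in the discussion following conditions (Ha) and (Hb) in the text. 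I would handle this by arguing that if some $A_v$ decomposed as a disjoint union of blocks $A_{u_1},\dots,A_{u_m}$ extendable to a partition, then $v$ and the $u_i$ would be forced to coincide in their transversal membership in a way contradicting $T_0$ or the structure of $\mathscr{T}$; making this contradiction precise, and clarifying exactly which transversal families $\mathscr{T}$ admit such a decomposition-free representation, is the delicate step that the proof must address carefully.
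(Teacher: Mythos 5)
Your construction in both directions is exactly the paper's: for $(1)\Rightarrow(2)$ the transversals $T_x=\{A\in\mathscr{F}\mid x\in A\}$ indexed by points $x\in I$, and for $(2)\Rightarrow(1)$ the encoding $v\mapsto T_v=\{T\in\mathscr{T}\mid v\in T\}$, with $T_0$ giving injectivity and nonemptiness of the $T_v$, the partition property of hyperedges giving a homomorphism, and transversal completeness giving bijectivity on hyperedges. All of that is correct and matches the paper's proof essentially verbatim.

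The one place you go beyond the paper is your final paragraph on condition (Hb), and there your instinct is sound but your proposed repair is not. The paper's own proof never verifies (Hb) at all. You suggest that a decomposition $T_v=T_{u_1}\sqcup\dots\sqcup T_{u_m}$ should contradict $T_0$ or the structure of $\mathscr{T}$; no such contradiction is available. Take vertices $\{v,u_1,u_2,w\}$ with hyperedges $\{v,w\}$ and $\{u_1,u_2,w\}$, and $\mathscr{T}=\{\{w\},\{v,u_1\},\{v,u_2\}\}$: each of these three sets meets each hyperedge exactly once, the family is $T_0$, and one checks that the only sets meeting all three transversals exactly once are the two hyperedges, so the hypergraph is transversal complete. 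Yet $T_v=T_{u_1}\sqcup T_{u_2}$, and in fact any representation of this hypergraph as $\mathbb{H}_\mathscr{F}$ forces $A_v=A_{u_1}\sqcup A_{u_2}$ from the two partition conditions, so (Hb) cannot be salvaged by choosing $\mathscr{T}$ differently or by passing to indecomposables (which would delete the vertex $A_v$). The conclusion is that the lemma must be read with (Hb) waived from the notion of partition system---consistent with the paper's remark that (Ha) and (Hb) are ``only conveniences'' and with the fact that the associated semiring $S_{\mathbb{H}_\mathscr{F}}$ is unaffected---rather than that the missing step can be filled in. So your proof is as complete as the paper's on every point except the one you flagged, which is a genuine gap in the statement as literally phrased, not something your contradiction argument (or any argument) can close.
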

\begin{proof}
If $\mathbb{H}$ is isomorphic to a hypergraph on a partition system $\mathscr{F}$ over a set $I$, then for every $i\in I$, the set of all members of $\mathscr{F}$ containing $i$ is a transversal.  Let $\mathscr{T}$ denote this family of transversals---one for each $i\in I$.  As every element $A$ of $\mathscr{F}$ is nonempty, there is always a choice of $i$ that will place~$A$ in at least one transversal.  Moreover, for every $A\neq B$ in $\mathscr{F}$ there is a point~$i$ in~$A$ but not~$B$ (or~$B$ but not~$A$), and then selecting this point will place $A$ into the transversal but not $B$ (or vice versa).  So $\mathbb{H}$ is $T_0$ with respect to transversals from $\mathscr{T}$.  The definition of hyperedges on $\mathscr{F}$ ensures that $\mathbb{H}$ is transversal complete with respect to $\mathscr{T}$.

Conversely, if $\mathbb{H}$ is $T_0$ and transversal complete with respect to some family of transversals $\mathscr{T}$, then let $I$ denote the set $\mathscr{T}$ itself,  and represent each vertex $v$ in~$\mathbb{H}$ as the set $T_v\subseteq \mathscr{T}$ of transversals that include $v$.
Then the set of vertices of~$\mathbb{H}$ is represented as a set of nonempty subsets of $I$, which we can denote by $\mathscr{F}$.  Let $e=\{v_1,\dots,v_k\}$ be a hyperedge of~$\mathbb{H}$.
The definition of a transversal ensures that the sets $T_{v_1},\dots,T_{v_k}$ form a partition of $I$ so that the mapping $v\mapsto T_v$ is a hypergraph homomorphism.
The $T_0$-separability ensures that this homomorphism is a bijection, and the transversal completeness of~$\mathbb{H}$ with respect to $\mathscr{T}$ ensures that it is also a bijection on hyperedges, hence an isomorphism.
\end{proof}
It can be seen from the proof that if every transversal containing~$u$ also contains~$v$, then $T_u\subseteq T_v$, so if the~$T_0$ separation property is replaced by the stronger $T_1$ condition (for every $u\neq v$ there is a transversal containing $u$ but not $v$), then this (and transversal completeness) is equivalent to block hypergraphs in which no block is a subset of any other block.
It is well known that it is \texttt{NP}-complete to determine if a hypergraph has a transversal (the positive 1-in-3SAT problem is a particular case of this, in this case of 3-uniform hypergraphs).
In \cite{jac:SAT} it is shown that it is \texttt{NP}-hard to distinguish those hypergraphs that have no transversal at all, from those that are $T_1$ (stronger than $T_0$) with respect to transversals and transversal complete; this is true even in the 3-uniform hypergraph setting.
From Lemma~\ref{lem:transversal} it follows in particular that it is \texttt{NP}-hard to determine if a hypergraph is a block hypergraph.
\begin{remark}\label{rem:transversal}
In general the transversal family $\mathscr{T}$ can serve as a certificate for verifying that a hypergraph $\mathbb{H}$ is a block hypergraph.
If $V$ denotes the vertex set of $\mathbb{H}$, then at most $\binom{|V|}{2}$ transversals are required to witness the $T_0$ separation property (and any larger set of transversals will preserve $T_0$-separability).
There may be exponentially many non-hyperedges however, and it must be verified that there is a transversal in $\mathscr{T}$ that selects either none or more than one vertex.
\end{remark}

We may now revisit Theorem~\ref{thm24101201}.
We first recall some constructions relating to flat extensions of partial algebras.
For a partial algebra $B$ and element $p\in B$, we say that $b\in B$ \emph{divides} $p$ if there is a term $t(x,y_1,\dots)$ and elements $c_1,\dots$ such that $t^B(b,c_1,\dots)$ is defined and is equal to $p$.
The degenerate case where $y_1,\dots$ is empty is allowed,
and is important because it is often required to show that $p$ divides itself: using the term~$x$.
Let $B_p$ denote the partial algebra obtained by taking all elements of~$B$ that divide~$p$, and restricting the existing partial operations to these elements.
(This might be called a \emph{divisor restriction} of~$B$.)
For a class $K$ of partial algebras of the same signature, the class operator $\mathsf{D}(K)$ will denote the class of partial algebras of the form $B_p$ for $B\in K$ and $p\in B$.
In the case of a single finite partial algebra $P$, it is shown in Willard~\cite[Theorem~1.2]{wil} that the subdirectly irreducible algebras in $\mathsf{V}(\flat(P))$ are, up to isomorphism, the flat extensions of partial algebras in $\mathsf{DSP}(P)$, where $\mathsf{P}$ denotes direct powers of $P$,~$\mathsf{S}$ denotes sub (partial) algebras.
Jackson~\cite[Theorem~3.3]{jac:flat} extends this to the case of classes of partial algebras, not necessarily finite: if $K$ is a class of partial algebras in the same signature, and $\flat(K)$ denotes the class consisting of the flat extensions of members of $K$, then the subdirectly irreducible members of $\mathsf{V}(\flat(K))$, up to isomorphism, are the flat extensions of members of $\mathsf{DSPP}_{\rm u}(K)$, where $\mathsf{P}_{\rm u}$ denotes ultraproducts (and $\mathsf{P}$ is for direct products over nonempty classes of partial algebras).

The first two items of the following theorem replicate the first two items of Theorem~\ref{thm24101201} with a different proof,
while the third item is a refinement.
\begin{thm}\label{thm:siins7}
Let $S$ be a finite subdirectly irreducible member of $\mathsf{V}(S_7)$.  Then precisely one of the following statements holds\up:
\begin{itemize}
\item[$(1)$] $S$ is isomorphic to $M_2$\up;
\item[$(2)$] $S$ contains a copy of a $S_7$ as a subsemiring, and hence generates the same variety as $S_7$\up;
\item[$(3)$] $S$ is isomorphic to $S_\mathbb{H}$ for some block hypergraph $\mathbb{H}$.
\end{itemize}
In the case of \up($2$\up), $S$ is isomorphic to $S^1_\mathbb{H}$ for some block hypergraph $\mathbb{H}$.
\end{thm}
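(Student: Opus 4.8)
The plan is to combine the trichotomy already obtained in Theorem~\ref{thm24101201} with the Willard--Jackson description of the subdirectly irreducible members of a flat variety, applied to the presentation $S_7=\flat(P_0)$, where $P_0$ is the two-element partial groupoid $\{1,a\}$ with $1$ a multiplicative identity and $a\cdot a$ undefined, so that $\flat(P_0)\cong S_7$ and hence $\mathsf{V}(S_7)=\mathsf{V}(\flat(P_0))$. Since $P_0$ is finite, Willard's theorem (recalled above) tells us that every finite subdirectly irreducible $S\in\mathsf{V}(S_7)$ is isomorphic to a flat extension $\flat(B_p)$ with $B\in\mathsf{SP}(P_0)$ and $p\in B$. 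The whole argument then consists of recognising $B_p$ concretely as the partial algebra underlying a block hypergraph semiring, and sorting the outcome according to Theorem~\ref{thm24101201}.

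First I would make $\mathsf{SP}(P_0)$ explicit. Identifying a tuple in $P_0^n$ with its support (the set of coordinates equal to $a$) turns $P_0^n$ into the power set of $\{1,\dots,n\}$ under disjoint union: the product $A\cdot B$ is defined exactly when $A\cap B=\varnothing$, with value $A\cup B$, and the identity is $\varnothing$. A subalgebra $B$ of $P_0^n$ is then a family $\mathscr B$ of subsets closed under disjoint unions, not necessarily containing $\varnothing$. Writing $I_0$ for the subset corresponding to $p$, the divisor restriction $B_p$ consists of those $A\in\mathscr B$ occurring in some partition of $I_0$ by members of $\mathscr B$, together with $\varnothing$ precisely when $\varnothing\in\mathscr B$. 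A short check shows the image of $p$ generates the least nonzero multiplicative ideal of $\flat(B_p)$, in agreement with Lemma~\ref{s7prop}(c).

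The central step is to produce the partition system. I would let $\mathscr F$ be the family of indecomposable nonempty members of $B_p$, that is, those not expressible as a disjoint union of two or more nonempty members of $B_p$, viewed as subsets of $I=I_0$. Verifying that $\mathscr F$ is a partition system reduces to (Hb), immediate from indecomposability, and (Ha): each $A\in\mathscr F$ extends to a partition of $I_0$ by members of $B_p$ because $A$ divides $p$, and refining every part into indecomposables by induction on size yields a partition by members of $\mathscr F$, so no vertex of $\mathbb H_\mathscr F$ is isolated. The same refinement gives $B_p\setminus\{\varnothing\}=\overline{\mathscr F}$, the constraint ``within a hyperedge'' defining $\overline{\mathscr F}$ corresponding exactly to ``divides $p$''. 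Consequently $\flat(B_p)$ equals $S_{\mathbb H_\mathscr F}$ when $\varnothing\notin\mathscr B$ and $S^1_{\mathbb H_\mathscr F}$ when $\varnothing\in\mathscr B$. The main obstacle is precisely this identification: confirming that the disjoint-union-within-a-hyperedge closure building $\overline{\mathscr F}$ is neither larger nor smaller than the set of divisors of $p$, so that the abstract $\flat(B_p)$ and the concrete $S_{\mathbb H_\mathscr F}$ coincide.

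Finally I would read off the three cases from the position of $\varnothing$ and the size of $I_0$. When $\varnothing\notin\mathscr B$ we have $|E(S)|=1$ and $S\cong S_{\mathbb H_\mathscr F}$, giving~(3). When $\varnothing\in\mathscr B$ we have $|E(S)|=2$: if $I_0=\varnothing$ then $B_p=\{\varnothing\}$ and $S\cong M_2$, giving~(1); if $I_0\neq\varnothing$ then $\mathscr F\neq\varnothing$ and $S\cong S^1_{\mathbb H_\mathscr F}$ contains the copy $\{\varnothing,A,\infty\}\cong S_7$ for any $A\in\mathscr F$, which is~(2) and the final clause, and then $\mathsf{V}(S)=\mathsf{V}(S_7)$ follows from the embedding of $S_7$ into $S$ together with $S\in\mathsf{V}(S_7)$. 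Exclusivity is routine: $S_{\mathbb H_\mathscr F}$ has $\infty$ as its only multiplicative idempotent, so it can neither contain $S_7$ nor be $M_2$, while $M_2$ is too small to contain $S_7$. This case split is forced once the block-hypergraph identification of the previous paragraph is in hand, so that identification carries essentially all of the content beyond Theorem~\ref{thm24101201}.
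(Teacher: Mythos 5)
Your proposal is correct and follows essentially the same route as the paper: both invoke Willard's theorem to realise $S$ as $\flat(B_p)$ with $B$ a subalgebra of $\{1,a\}^I$, identify tuples with their supports so that defined products become disjoint unions, take the indecomposable divisors of $p$ as the partition system $\mathscr{F}$, and split cases according to whether $\underline{1}$ lies in $B_p$ and whether $p=\underline{1}$. The additional detail you supply (the explicit identification of $\overline{\mathscr{F}}$ with the set of divisors of $p$, and the mutual exclusivity of the three cases) is consistent with, and slightly more explicit than, the paper's own argument.
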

\begin{proof}
As $S_7$ is finite, it follows from \cite[Theorem 1.2]{wil} that every subdirectly irreducible algebra in $\mathsf{V}(S_7)$ is isomorphic to one of the form $\flat(B_p)$,
where $B$ is a subalgebra of $\{1, a\}^I$ (for some nonempty index set $I$) and $p$ is an element of $\{1, a\}^I$.

We now consider some mutually exclusive cases.  If $p$ is the constant tuple $\underline{1}$, then $B_p=\{\underline{1}\}$ and we arrive at $M_2\cong \flat(B_p)$.  Now assume that $p$ has at least one coordinate equal to $a$.
Let $J$ be the largest subset of $I$ on which $p$ is constantly~$a$, and note that if $b$ divides $p$ then $b$ is constantly $1$ on $I\backslash J$.
Thus there is no loss of generality in assuming that $I=J$, and that $p$ is the constant tuple $\underline{a}$.  If $\underline{1}\in B_p$, then $\{\underline{1},\underline{a},\infty\}$ is a subalgebra of $\flat(B_p)$ that is isomorphic to $S_7$.
Now assume that $\underline{1}\notin B_p$, or has been removed for analysis of the remaining part of $B_p$ (the element~$\underline{1}$ can be removed because $B\backslash\{\underline{1}\}$ is also a subalgebra of $S_7^I$).
For each element $b\in B_p$, let $I_b$ denote the subset $\{i\in I\mid b(i)=a\}$ of $I$.
Observe that if $b,c\in B_p$, then $b\cdot c$ is defined if and only if  $I_b\cap I_c=\varnothing$, in which case $I_{b\cdot c}=I_b\cup I_c$.
So non-$\infty$ products can be viewed as disjoint unions of sets of the form $I_c$.
Let~$P$ denote those elements of $B_p$ that do not arise as a nontrivial product.
Then the family $\mathscr{F}=\{I_b\mid b\in P\}$ is a partition system, and $\flat(B_p)$ is isomorphic to the block hypergraph semiring over~$\mathscr{F}$.
\end{proof}

An alternative version  of Corollary~\ref{c3.1} is now possible. This lemma also completes the missing detail of the proof of Proposition~\ref{pro24101901}.
\begin{lem}\label{lem:wordstratification}
The variety ${\bf N}$ is generated by $S_c(a_1\cdots a_k)$, $k\geq 1$.
\end{lem}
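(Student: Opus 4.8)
The plan is to exploit local finiteness together with the classification of subdirectly irreducibles in Theorem~\ref{thm:siins7}. Since ${\bf N}$ is a subvariety of the finitely generated, hence locally finite, variety $\mathsf{V}(S_7)$, it is generated by its finite subdirectly irreducible members. Thus it suffices to show that every finite subdirectly irreducible member of ${\bf N}$ lies in $\mathsf{V}(S_c(a_1\cdots a_k))$ for some $k$, and conversely that each $S_c(a_1\cdots a_k)$ lies in ${\bf N}$.

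First I would pin down the finite subdirectly irreducible members of ${\bf N}$. Let $S$ be such an algebra. As subdirect irreducibility is an absolute property and ${\bf N}\subseteq\mathsf{V}(S_7)$, the algebra $S$ is a finite subdirectly irreducible member of $\mathsf{V}(S_7)$, so Theorem~\ref{thm:siins7} tells us that $S$ is isomorphic to $M_2$, contains a copy of $S_7$, or is isomorphic to $S_\mathbb{H}$ for a block hypergraph $\mathbb{H}$. Since $S\in{\bf N}$, it satisfies the defining identity $x^2y\approx x^2$ of ${\bf N}$. This excludes the first two cases: any algebra containing $S_7$ (in particular $M_2$, which fails $x^2y\approx x^2$, and $S_7$ itself, where $1^2a=a\neq 1=1^2$) cannot satisfy $x^2y\approx x^2$, as identities are inherited by subalgebras. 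Hence every finite subdirectly irreducible member of ${\bf N}$ is isomorphic to a block hypergraph semiring $S_{\mathbb{H}_\mathscr{F}}$ over some partition system $\mathscr{F}$ on a finite set $I$.

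I would then invoke Lemma~\ref{lem:subsemi}, which gives that $S_{\mathbb{H}_\mathscr{F}}$ is isomorphic to a quotient of a subsemiring of $S_c(a_1\cdots a_{|I|})$, and therefore belongs to $\mathsf{V}(S_c(a_1\cdots a_{|I|}))$. Consequently every finite subdirectly irreducible member of ${\bf N}$ lies in some $\mathsf{V}(S_c(a_1\cdots a_k))$, and local finiteness yields ${\bf N}\subseteq\bigvee_{k\geq 1}\mathsf{V}(S_c(a_1\cdots a_k))$. The reverse inclusion is routine: by Lemma~\ref{s7prop}$(d)$ each $S_c(a_1\cdots a_k)$ lies in $\mathsf{V}(S_7)$, and since its multiplication is nil (every element squares to $\infty$) it satisfies $x^2y\approx x^2$, so $S_c(a_1\cdots a_k)\in{\bf N}$. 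Combining the two inclusions gives ${\bf N}=\bigvee_{k\geq 1}\mathsf{V}(S_c(a_1\cdots a_k))$, which is the assertion.

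The one point I would check most carefully is the elimination of the $S_7$-containing case and, symmetrically, the verification that block hypergraph semirings really do satisfy the defining identity of ${\bf N}$; both hinge on the single observation that in $S_{\mathbb{H}_\mathscr{F}}$ every element squares to $\infty$ (because $A\cap A=A\neq\varnothing$), so $x^2y\approx x^2$ holds degenerately, whereas $S_7$ carries a nontrivial multiplicative idempotent $1$ with $1\cdot a\neq 1$. Once this dichotomy is clear, the classification plus Lemma~\ref{lem:subsemi} reduces the whole statement to a short two-way inclusion, and no delicate computation remains.
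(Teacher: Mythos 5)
Your proof is correct and follows essentially the same route as the paper's: both reduce to the finite subdirectly irreducible members of ${\bf N}$, identify them as block hypergraph semirings via Theorem~\ref{thm:siins7}, and then apply Lemma~\ref{lem:subsemi} to place each inside some $\mathsf{V}(S_c(a_1\cdots a_k))$. You simply spell out the details (exclusion of the $M_2$ and $S_7$-containing cases, and the reverse inclusion) that the paper leaves implicit.
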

\begin{proof}
The variety ${\bf N}$ is generated by its finite subdirectly irreducibles, which are block hypergraph semirings by Theorem~\ref{thm:siins7}.  By Lemma~\ref{lem:subsemi} each such block hypergraph semiring is isomorphic to a quotient of a subsemiring of $S_c(a_1\cdots a_k)$ for some $k$.
\end{proof}
The proof of Lemma~\ref{lem:wordstratification} used the fact that every finite subdirectly irreducible member $S$ of ${\bf N}$ lies in the variety generated by $S_c(a_1\cdots a_k)$ for some $k$.
It is easily seen that an upper bound, in terms of $|S|$, on the smallest choice of $k$ can derived from Lemmas~\ref{lem:transversal} and~\ref{lem:subsemi}, though we do not use this here.

The following lemma does not assume that $\mathcal{V}$ is a subvariety of $\mathsf{V}(S_7)$, though it is implicit that the top element is a multiplicative zero.
\begin{lem}\label{lem:moresi}
Let $\mathcal{V}$ be a variety generated by a class of flat semirings.
Then the finite subdirectly irreducible members of $\mathcal{V}\vee \mathsf{V}(M_2)$ are those of $\mathcal{V}$ along with $M_2$.
\end{lem}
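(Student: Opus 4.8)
The plan is to realize $\mathcal{V}\vee\mathsf{V}(M_2)$ as a variety generated by flat extensions of partial algebras and then apply the classification of subdirect irreducibles in \cite[Theorem~3.3]{jac:flat}. First I would fix a class $\mathcal{K}$ of partial groupoids with $\mathcal{V}=\mathsf{V}(\flat(\mathcal{K}))$, obtained by taking the generating flat semirings of $\mathcal{V}$ and deleting their top element $\infty$ to recover the underlying partial multiplication. Since $M_2$ is itself the flat extension $\flat(\mathbf{1})$ of the one-element (idempotent) partial groupoid $\mathbf{1}$, all the algebras involved lie in one signature and
\[
\mathcal{V}\vee\mathsf{V}(M_2)=\mathsf{V}(\flat(\mathcal{K}\cup\{\mathbf{1}\})).
\]
By \cite[Theorem~3.3]{jac:flat}, the subdirectly irreducible members of this variety are, up to isomorphism, exactly the $\flat(B)$ with $B\in\mathsf{DSPP}_{\rm u}(\mathcal{K}\cup\{\mathbf{1}\})$, whereas those of $\mathcal{V}$ are the $\flat(B)$ with $B\in\mathsf{DSPP}_{\rm u}(\mathcal{K})$.

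The heart of the argument is then the identity (up to isomorphism)
\[
\mathsf{DSPP}_{\rm u}(\mathcal{K}\cup\{\mathbf{1}\})=\mathsf{DSPP}_{\rm u}(\mathcal{K})\cup\{\mathbf{1}\},
\]
which I would establish by tracking $\mathbf{1}$ through each class operator from the inside out. For $\mathsf{P}_{\rm u}$: in an ultraproduct $\prod_i A_i/U$ with each $A_i\in\mathcal{K}\cup\{\mathbf{1}\}$, the ultrafilter $U$ contains exactly one of the sets $\{i:A_i=\mathbf{1}\}$ and $\{i:A_i\in\mathcal{K}\}$; in the first case any two tuples agree on a $U$-large set so the ultraproduct collapses to $\mathbf{1}$, and in the second case it is isomorphic to an ultraproduct over a $U$-large index set of members of $\mathcal{K}$, hence lies in $\mathsf{P}_{\rm u}(\mathcal{K})$. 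For $\mathsf{P}$: a direct product of partial algebras is unchanged up to isomorphism by deleting one-element factors (as $\mathbf{1}\times A\cong A$), so any product using a member of $\mathsf{P}_{\rm u}(\mathcal{K})$ stays in $\mathsf{PP}_{\rm u}(\mathcal{K})$, while a product of copies of $\mathbf{1}$ is again $\mathbf{1}$. Finally $\mathsf{S}$ and $\mathsf{D}$ fix $\mathbf{1}$ (its only nonempty subalgebra and its only divisor restriction are itself) and carry $\mathsf{PP}_{\rm u}(\mathcal{K})$-members into $\mathsf{DSPP}_{\rm u}(\mathcal{K})$. Composing these observations gives the displayed identity.

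From here the conclusion follows by reading off the two pieces: the subdirectly irreducibles $\flat(B)$ with $B\in\mathsf{DSPP}_{\rm u}(\mathcal{K})$ are precisely the subdirect irreducibles of $\mathcal{V}$, while $\flat(\mathbf{1})=M_2$; intersecting with the finite algebras yields the claimed list. The reverse inclusion is immediate and I would dispatch it first, since subdirect irreducibility is an absolute property of an algebra and $\mathcal{V},\mathsf{V}(M_2)\subseteq\mathcal{V}\vee\mathsf{V}(M_2)$, so every finite subdirect irreducible of $\mathcal{V}$ and $M_2$ itself are finite subdirect irreducibles of the join. I expect the only delicate point to be the $\mathsf{P}_{\rm u}$ step: one must use the ultrafilter dichotomy to be sure that mixing $\mathbf{1}$ with members of $\mathcal{K}$ cannot manufacture a genuinely new partial algebra, but only reproduces something already in $\mathsf{P}_{\rm u}(\mathcal{K})$ or the degenerate $\mathbf{1}$.
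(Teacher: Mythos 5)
Your proposal is correct and follows essentially the same route as the paper: both reduce to the description of subdirectly irreducibles of $\mathsf{V}(\flat(K))$ as flat extensions of members of $\mathsf{DSPP}_{\rm u}(K)$ from \cite[Theorem~3.3]{jac:flat}, and then observe that adjoining the trivial total partial algebra $\mathbf{1}$ contributes nothing new to $\mathsf{DSPP}_{\rm u}$ beyond $\mathbf{1}$ itself. The only cosmetic difference is that the paper takes $K$ to be the class of subdirectly irreducible members of $\mathcal{V}$ (flat by Lemma~\ref{lem24121301}) rather than the given generators, and leaves the operator-by-operator absorption of $\mathbf{1}$ as ``routine,'' which you spell out correctly.
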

\begin{proof}
Let $K$ denote the class of all subdirectly irreducible members of $\mathcal{V}$.
We need to prove that if $S$ is a  subdirectly irreducible algebra in $\mathcal{V}\vee \mathsf{V}({M}_2)$ that is not isomorphic to ${M}_2$, then it is in $K$.
The variety $\mathcal{V}\vee \mathsf{V}({M}_2)$ is generated by $K\cup\{{M}_2\}$.
Let $K'$ be the family of partial algebras obtained from members of $K$ by removing the element~$\infty$ and keeping only the operation $\cdot$, now only partially defined.  Removing~$\infty$ from ${M}_2$ simply produces the one-element algebra ${\bf 1}$ with total multiplication $\cdot$.
It follows from \cite[Theorem~3.3]{jac:flat} that
$S$ is isomorphic to the flat extension of a partial algebra ${\bf P}$ in $\mathsf{DSPP}_{\rm u}(K'\cup\{{\bf 1}\})$.
As ${\bf 1}$ is the trivial one-element total algebra,
it is routine to see that the isomorphism closure of $\mathsf{DSPP}_{\rm u}(K'\cup\{{\bf 1}\})$ is equal to the isomorphism closure of $\{{\bf 1}\}\cup\mathsf{DSPP}_{\rm u}(K')$.
As $S$ is not isomorphic to $M_2$,
it follows that $S$ is isomorphic to the flat extension of a partial algebra in $\mathsf{DSPP}_{\rm u}(K')$, and therefore lies in $\mathcal{V}$, as required.
\end{proof}
An immediate corollary of Lemma~\ref{lem:moresi} is the following.
\begin{cor}\label{cor:directproduct}
The subvariety lattice of $\mathsf{V}({M}_2)\vee {\bf N}$
is isomorphic to the direct product of the $2$-element lattice and the subvariety lattice of
${\bf N}$\up: the interval $[{\bf T},{\bf N}]$ is isomorphic to $[\mathsf{V}({M}_2),\mathsf{V}({M}_2)\vee {\bf N}]$ under the map $\mathcal{V}\mapsto \mathsf{V}({M}_2)\vee \mathcal{V}$.
\end{cor}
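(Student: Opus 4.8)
The goal is Corollary~\ref{cor:directproduct}, which asserts that the subvariety lattice of $\mathsf{V}(M_2)\vee{\bf N}$ splits as a direct product of the two-element lattice with the subvariety lattice of ${\bf N}$, realised concretely by the join map.

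\smallskip

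The plan is to exploit Lemma~\ref{lem:moresi}, which already tells us exactly what the finite subdirectly irreducible members of $\mathsf{V}(M_2)\vee{\bf N}$ are: they are $M_2$ together with the finite subdirectly irreducible members of ${\bf N}$.  Since ${\bf N}$ is locally finite (indeed every member of $\mathsf{V}(S_7)$ is, being a subvariety of a variety generated by a finite algebra), every subvariety of $\mathsf{V}(M_2)\vee{\bf N}$ is determined by the finite subdirectly irreducible algebras it contains.  So the first step is to observe that $M_2$ is, up to isomorphism, the \emph{only} finite subdirectly irreducible member of $\mathsf{V}(M_2)\vee{\bf N}$ lying outside~${\bf N}$, and that $M_2\notin{\bf N}$ (for instance because $M_2$ has a nonzero multiplicative idempotent while members of ${\bf N}$ satisfy $x^2\approx x^2y$, equivalently have nilpotent multiplicative reduct by Proposition~\ref{p3} and its surrounding discussion).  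This means $M_2$ is a ``splitting'' subdirectly irreducible: a subvariety either contains it or not, independently of which members of ${\bf N}$ it contains.

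\smallskip

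The second step is to make this independence precise as the claimed product decomposition.  Define $\Psi\colon\mathcal{L}(\mathsf{V}(M_2)\vee{\bf N})\to\{0,1\}\times\mathcal{L}({\bf N})$ by sending a subvariety $\mathcal{W}$ to the pair $(\epsilon,\mathcal{W}\cap{\bf N})$, where $\epsilon=1$ if $M_2\in\mathcal{W}$ and $\epsilon=0$ otherwise.  I would check that $\Psi$ is an order isomorphism.  Order preservation is immediate.  For injectivity and surjectivity I use the fact that $\mathcal{W}$ is recovered from its finite subdirectly irreducibles: by Lemma~\ref{lem:moresi} these are precisely the finite subdirectly irreducibles of $\mathcal{W}\cap{\bf N}$ together with $M_2$ exactly when $\epsilon=1$.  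Hence $\mathcal{W}=\mathcal{W}_0\vee(\epsilon\cdot\mathsf{V}(M_2))$, where $\mathcal{W}_0=\mathcal{W}\cap{\bf N}$, showing $\Psi$ is injective; and given any pair $(\epsilon,\mathcal{V}_0)$ with $\mathcal{V}_0\subseteq{\bf N}$, the variety $\mathcal{V}_0\vee(\epsilon\cdot\mathsf{V}(M_2))$ maps to it, giving surjectivity.  The inverse of $\Psi$ restricted to the second coordinate is exactly the join map $\mathcal{V}\mapsto\mathsf{V}(M_2)\vee\mathcal{V}$ of the statement, so the final sentence of the corollary follows: the interval $[{\bf T},{\bf N}]$ is carried isomorphically onto $[\mathsf{V}(M_2),\mathsf{V}(M_2)\vee{\bf N}]$.

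\smallskip

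The only genuinely delicate point—and the step I would treat most carefully—is verifying that the join map is \emph{well-defined as a bijection onto the second factor}, i.e.\ that distinct subvarieties $\mathcal{V}_1\neq\mathcal{V}_2$ of ${\bf N}$ yield distinct joins $\mathsf{V}(M_2)\vee\mathcal{V}_1\neq\mathsf{V}(M_2)\vee\mathcal{V}_2$.  This is where adding $M_2$ must not collapse any distinctions inside ${\bf N}$, and it is precisely Lemma~\ref{lem:moresi} that rules this out: the finite subdirectly irreducibles of $\mathsf{V}(M_2)\vee\mathcal{V}_i$ other than $M_2$ are exactly those of $\mathcal{V}_i$, so $(\mathsf{V}(M_2)\vee\mathcal{V}_i)\cap{\bf N}=\mathcal{V}_i$ by local finiteness, and the two joins therefore differ.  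Thus the entire corollary reduces to Lemma~\ref{lem:moresi} plus the bookkeeping of the product-lattice isomorphism, with no further combinatorial content required.
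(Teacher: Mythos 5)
Your proof is correct and follows exactly the route the paper intends: the paper presents this as an immediate consequence of Lemma~\ref{lem:moresi}, and your argument is the natural elaboration of that, using local finiteness of $\mathsf{V}(S_7)$ to recover each subvariety from its finite subdirectly irreducible members and then reading off the product decomposition. The one point worth keeping explicit, which you do handle, is that Lemma~\ref{lem:moresi} must be applied to each subvariety $\mathcal{V}_0\subseteq{\bf N}$ (legitimate since every such subvariety is generated by flat semirings, by Lemma~\ref{s7prop}(c)), not just to ${\bf N}$ itself, in order to get injectivity of the join map.
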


We consider the term ${\bf t}_\mathbb{H}$ as in Section~\ref{sec:NFB}, now allowing for the variable hyperedge size in a hypergraph $\mathbb{H}$:
\[
{\bf t}_\mathbb{H} := \sum_{\{v_1,\dots,v_k\}\in E_\mathbb{H}}x_{v_1}\cdots x_{v_k}
\]
where $E_\mathbb{H}$ denotes the hyperedge set of $\mathbb{H}$ and
$\{x_v\mid v\in V_\mathbb{H}\}$ is a copy of the vertex set $V_\mathbb{H}$ of $\mathbb{H}$.
\begin{lem}\label{lem:hyperhomom}
Let $\mathscr{F}_1$ and $\mathscr{F}_2$ be partition systems on sets $I$ and $J$ respectively.  Let $\mathbb{G}$ and $\mathbb{H}$ denote the corresponding block hypergraphs.  If $\mathbb{G}$ and $\mathbb{H}$ are $k$-uniform for some $k>2$, then $S_{\mathscr{F}_2}\models \bt_{\mathbb{G}}\approx \bt_{\mathbb{G}}^2$ if and only if there is no homomorphism from $\mathbb{G}$ to $\mathbb{H}$.
\end{lem}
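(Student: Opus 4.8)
The plan is to read off the value of $\bt_{\mathbb{G}}$ under an arbitrary substitution $\varphi$ into $S_{\mathscr{F}_2}$, and to show that the identity can fail only because of a homomorphism $\mathbb{G}\to\mathbb{H}$. The first observation is that the right-hand side is degenerate: every non-$\infty$ element $A\in\overline{\mathscr{F}_2}$ satisfies $A\cdot A=\infty$ (since $A\cap A=A\neq\varnothing$), and $\infty\cdot\infty=\infty$, so $\varphi(\bt_{\mathbb{G}}^2)=\infty$ for \emph{every} $\varphi$. Hence $S_{\mathscr{F}_2}\models\bt_{\mathbb{G}}\approx\bt_{\mathbb{G}}^2$ if and only if $\varphi(\bt_{\mathbb{G}})=\infty$ for every $\varphi$. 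Since the additive reduct is a flat semilattice, $\varphi(\bt_{\mathbb{G}})\neq\infty$ exactly when all of the hyperedge products making up $\bt_{\mathbb{G}}$ take a common non-$\infty$ value $s\in\overline{\mathscr{F}_2}$. So the whole lemma reduces to showing that such a substitution exists if and only if there is a hypergraph homomorphism $\mathbb{G}\to\mathbb{H}$.

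For the direction producing a failure of the identity from a homomorphism $\phi\colon\mathbb{G}\to\mathbb{H}$, I would set $\varphi(x_v):=\phi(v)$, regarding each vertex of $\mathbb{H}$ as the corresponding member of $\mathscr{F}_2\subseteq\overline{\mathscr{F}_2}$. Because $\phi$ sends each hyperedge $\{v_1,\dots,v_k\}$ of $\mathbb{G}$ to a hyperedge of $\mathbb{H}$, and $\mathbb{H}$ is $k$-uniform, the images $\phi(v_1),\dots,\phi(v_k)$ are $k$ distinct, pairwise disjoint blocks whose disjoint union is all of $J$. Thus every hyperedge product evaluates to the top element $J$, giving $\varphi(\bt_{\mathbb{G}})=J\neq\infty$ while $\varphi(\bt_{\mathbb{G}}^2)=\infty$, so the identity fails.

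The substantive direction is the converse, where I expect the main work. Suppose $\varphi$ witnesses $\varphi(\bt_{\mathbb{G}})=s\neq\infty$, so that for every hyperedge $\{v_1,\dots,v_k\}$ of $\mathbb{G}$ the factors $\varphi(x_{v_1}),\dots,\varphi(x_{v_k})$ are pairwise disjoint nonempty members of $\overline{\mathscr{F}_2}$ with disjoint union $s$. The key claim, which I would isolate as a combinatorial sublemma, is that $k$-uniformity forces $s=J$ and each $\varphi(x_{v_i})\in\mathscr{F}_2$. To prove it I would refine each factor $\varphi(x_{v_i})$ into its constituent $\mathscr{F}_2$-blocks, say $m_i\geq 1$ of them, and cover the remainder $J\setminus s$ by the $c\geq 0$ further $\mathscr{F}_2$-blocks guaranteed by $s\in\overline{\mathscr{F}_2}$. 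Together these blocks form a partition of $J$ into members of $\mathscr{F}_2$, i.e.\ a hyperedge of $\mathbb{H}$, so $k$-uniformity gives $\sum_i m_i+c=k$. Since $\mathbb{G}$ is $k$-uniform there are exactly $k$ factors, whence $\sum_i m_i\geq k$; combined with $c\geq 0$ this squeezes $c=0$ and every $m_i=1$. Therefore $s=J$ and $\{\varphi(x_{v_1}),\dots,\varphi(x_{v_k})\}$ is a hyperedge of $\mathbb{H}$ consisting of vertices. Consequently $v\mapsto\varphi(x_v)$ is a well-defined map from the vertices of $\mathbb{G}$ (there are no isolated ones, by property (Ha)) to the vertices of $\mathbb{H}$ carrying hyperedges to hyperedges, i.e.\ a homomorphism $\mathbb{G}\to\mathbb{H}$, as required.

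The main obstacle is precisely the combinatorial sublemma just described: the factors $\varphi(x_{v_i})$ and the element $s$ a priori decompose into $\mathscr{F}_2$-blocks with respect to unrelated hyperedges of $\mathbb{H}$, so a naive count of blocks is ill-defined. The device that makes the count legitimate is to assemble one single partition of $J$ from the refinements of the pairwise disjoint factors together with a cover of $J\setminus s$, so that $k$-uniformity of $\mathbb{H}$ applies to that one partition. Here the appeal to Lemma~\ref{lem:subsemi}, which realises $S_{\mathscr{F}_2}$ inside $S_c(a_1\cdots a_{|J|})$, is what guarantees that non-$\infty$ products genuinely are disjoint unions, so that the refinement step is valid and the two uniformity hypotheses (the same $k$ for $\mathbb{G}$ and $\mathbb{H}$) are both used essentially.
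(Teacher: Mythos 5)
Your proof is correct and follows essentially the same route as the paper's: reduce the identity to whether $\varphi(\bt_{\mathbb{G}})$ can avoid the value $\infty$, then show a non-$\infty$ evaluation forces every factor of every hyperedge product to be a vertex of $\mathbb{H}$ and the common value to be $J$, which is exactly the existence of a homomorphism $\mathbb{G}\to\mathbb{H}$. The only difference is one of detail: the paper compresses your block-counting sublemma into a one-line appeal to $(k+1)$-nilpotence of $S_{\mathscr{F}_2}$, whereas you spell out the partition-refinement argument that justifies it.
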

\begin{proof}
Consider any assignment $\nu$ of the variables $\{x_v\mid v\in \mathscr{F}_1\}$ into $S_{\mathscr{F}_2}$.
Now each summand of $\bt_{\mathbb{H}}$ is a product of variables of length $k$, and $S_{\mathscr{F}_2}$ is $(k+1)$-nilpotent.
(The $k$-uniformity assumption is used here.)
It follows that $\nu(\bt_\mathbb{G})\in\{J,\infty\}$ and the law fails if and only if the output $J$ is possible.  Now $\nu(\bt_\mathbb{G})=J$ if and only if each  summand $x_{v_1}\dots x_{v_k}$ of $\bt_\mathbb{G}$ takes the value $J$ under $\nu$.
But this occurs precisely when~$\nu$ maps the variables to vertex elements and $\{\nu(x_{v_1}),\dots,\nu(x_{v_k})\}$ is a hyperedge for all hyperedges $\{v_1,\dots,v_k\}$ of $\mathbb{G}$.
This is precisely when there is a homomorphism from $\mathbb{G}$ to $\mathbb{H}$ (simply map the vertex $v$ of $\mathbb{G}$ to the vertex $\nu(x_{v})$ of $\mathbb{H}$).
\end{proof}
The assumption of $k$-uniformity can be dropped from Lemma~\ref{lem:hyperhomom}, but it requires using a quite broad and slightly technical generalisation of the notion of hypergraph homomorphism.  This will not be needed for our results, so is omitted.

Homomorphisms between Kneser hypergraphs turn out to be tightly connected to homomorphisms between Kneser graphs.  We note two facts from the literature.
\begin{lem}\label{lem:BBGR}
Let $r\geq 3$ and $n_1\geq rk_1$, $n_2\geq rk_2$.
\begin{enumerate}
\item[$(1)$] \up(Bonomo-Braberman et al.~\cite[Theorem~1]{BBDVPV}.\up) For $r\geq 3$, there is a homomorphism from $\KG^r(n_1,k_1)$ to $\KG^r(n_2,k_2)$ if and only if there is a homomorphism between the Kneser graphs $\KG^2(n_1,k_1)$ to $\KG^2(n_2,k_2)$.
\item[$(2)$] \up(Godsil and Royle~\cite[Lemma~7.9.3]{godroy}.\up) If $n_1>2k_1$ and $n_1/k_1=n_2/k_2$ then there is a homomorphism from $\KG^2(n_1,k_1)$ to $\KG^2(n_2,k_2)$ if and only if $k_1$ divides $k_2$.
\end{enumerate}
\end{lem}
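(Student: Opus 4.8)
The plan is to obtain both items by direct appeal to the literature, after checking that the hypotheses recorded here are exactly the standing assumptions of the cited sources. For item~$(1)$ the conditions $r\geq 3$ and $n_i\geq rk_i$ guarantee that each $\KG^r(n_i,k_i)$ has at least one hyperedge, placing us in precisely the regime covered by Bonomo-Braberman et al.~\cite[Theorem~1]{BBDVPV}; for item~$(2)$ the conditions $n_1>2k_1$ and $n_1/k_1=n_2/k_2$ match verbatim the hypotheses of Godsil and Royle~\cite[Lemma~7.9.3]{godroy}. So at the formal level the proof is two citations, and the only routine work is to confirm the notational translation.

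For item~$(1)$ I would additionally record the short direct argument that underlies the cited theorem, since under the present convention a hyperedge of $\KG^r(n,k)$ is simply any $r$-set of pairwise disjoint $k$-subsets. In the forward direction, given a hypergraph homomorphism $\phi$ and a disjoint pair $A,B$ of $k_1$-sets, the bound $n_1\geq rk_1$ lets me extend $\{A,B\}$ to a full hyperedge $\{A,B,C_3,\dots,C_r\}$; since $\phi$ sends this to a hyperedge, its $r$ images are pairwise disjoint, so $\phi(A)\cap\phi(B)=\varnothing$ and $\phi$ restricts to a graph homomorphism. In the reverse direction, a graph homomorphism $\psi$ preserves disjointness, hence sends any $r$ pairwise disjoint $k_1$-sets to $r$ pairwise disjoint (necessarily distinct) $k_2$-sets, which is again a hyperedge; thus $\psi$ is already a homomorphism of the $r$-uniform hypergraphs. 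This reproves \cite[Theorem~1]{BBDVPV} in the form we need, so item~$(1)$ carries no real difficulty here.

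The substantive content lies entirely in item~$(2)$. The easy implication is constructive: if $k_2=mk_1$ then the equal-ratio hypothesis forces $n_2=mn_1$, and identifying $[n_2]$ with $[n_1]\times[m]$ the blow-up $A\mapsto A\times[m]$ sends a $k_1$-set to a $k_2$-set and preserves disjointness, giving a homomorphism $\KG^2(n_1,k_1)\to\KG^2(n_2,k_2)$. The reverse implication, that a homomorphism forces $k_1\mid k_2$ once the ratios agree, is the delicate part and the main obstacle: it uses that the fractional chromatic number $\chi_f(\KG^2(n,k))=n/k$ cannot increase along a homomorphism, so equal ratios pin both graphs to the same fractional chromatic number and make any homomorphism a borderline (tight) one, after which a finer structural analysis yields the divisibility. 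This last step is exactly what \cite[Lemma~7.9.3]{godroy} supplies, so I would invoke it rather than reconstruct it, and the lemma follows.
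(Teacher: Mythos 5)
The paper gives no proof of this lemma beyond the two citations, and your proposal rests on exactly the same two sources, so the approach is essentially identical. Your supplementary direct arguments --- the two-way translation between hypergraph and graph homomorphisms for item $(1)$ (using $n_1\geq rk_1$ to extend a disjoint pair to a full hyperedge, and noting that a homomorphism onto an $r$-set forces injectivity on each hyperedge) and the blow-up construction for the easy direction of item $(2)$ --- are correct under the paper's definition of hypergraph homomorphism, so there is nothing to fault.
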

A family of hypergraphs is \emph{homomorphism independent} if there are no homomorphisms between any two distinct members.
\begin{cor}\label{cor:continuumkneser}
Let $r>2$. Then the family of Kneser hypergraphs $\{\KG^r(rp,p)\mid p\text{ prime}\}$
is an infinite homomorphism independent family of $r$-regular block hypergraphs.
\end{cor}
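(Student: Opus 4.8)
The plan is to combine Example~\ref{eg:kl} with the two transfer results recorded in Lemma~\ref{lem:BBGR}. First I would observe that each member of the family is an $r$-regular block hypergraph essentially for free: setting $k=p$ and $\ell=r$ in Example~\ref{eg:kl} identifies $\KG^r(rp,p)$ with the block hypergraph $\mathbb{H}_{\mathscr{F}_{p,r}}$ over the partition system $\mathscr{F}_{p,r}$ of $p$-element subsets of $\{1,\dots,rp\}$, and every hyperedge of this hypergraph has cardinality $r$, so it is $r$-uniform. Infinitude of the family is then immediate from the infinitude of the primes.

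The substantive content is homomorphism independence. I would fix two distinct primes $p\neq q$ and show there is no homomorphism in either direction. Since $r>2$ we have $r\geq 3$, so Lemma~\ref{lem:BBGR}(1) reduces the existence of a homomorphism $\KG^r(rp,p)\to\KG^r(rq,q)$ to the existence of a homomorphism between the corresponding Kneser graphs $\KG^2(rp,p)\to\KG^2(rq,q)$. To the latter I would apply Lemma~\ref{lem:BBGR}(2) with $n_1=rp$, $k_1=p$, $n_2=rq$, $k_2=q$: the hypotheses hold because $n_1/k_1=r=n_2/k_2$ and $n_1=rp>2p=2k_1$ (using $r>2$). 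The lemma then says the graph homomorphism exists if and only if $k_1\mid k_2$, that is, if and only if $p\mid q$.

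Finally I would read off the conclusion. As $p$ and $q$ are distinct primes, $p\nmid q$, so there is no homomorphism $\KG^r(rp,p)\to\KG^r(rq,q)$; interchanging the roles of $p$ and $q$ shows there is likewise none in the reverse direction. Hence the family is homomorphism independent. The only point requiring care is the bookkeeping in the application of Lemma~\ref{lem:BBGR}(2)---checking that both the ratio condition $n_1/k_1=n_2/k_2$ and the strict inequality $n_1>2k_1$ are met, so that the full biconditional (rather than just one implication) is available, and then correctly tracking which of $p,q$ plays the role of $k_1$. No genuine obstacle remains once these transfer lemmas are in hand; the result is really a clean specialisation of them to the prime case, where the divisibility criterion $k_1\mid k_2$ collapses to equality of the two primes.
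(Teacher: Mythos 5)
Your proposal is correct and follows essentially the same route as the paper: reduce to Kneser graphs via Lemma~\ref{lem:BBGR}(1) (valid since $r>2$), then apply the Godsil--Royle divisibility criterion of Lemma~\ref{lem:BBGR}(2) with the common ratio $rp/p=r=rq/q$, noting that distinct primes never divide one another. The only difference is that you spell out the hypothesis checks and the identification with $\mathbb{H}_{\mathscr{F}_{p,r}}$ slightly more explicitly than the paper does.
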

\begin{proof}
By Lemma~\ref{lem:BBGR} part (1), it suffices to show that the family of Kneser graphs $\{\KG^2(rp,p)\mid p\text{ is prime}\}$ is homomorphism independent, noting that $r>2$ as required.  As $rp/p=r=rq/q$ for any two primes $p,q$, it follows from part (2) of Lemma~\ref{lem:BBGR} that there are no homomorphisms between any two distinct members of the family.
\end{proof}
\begin{cor}\label{cor:continuum}
Let $r>2$.
Then the interval $[\mathsf{V}(S_c(a_1\dots a_r)), {\bf N}_{r+1}]$ has cardinality of the continuum.
It contains both a chain and an antichain of cardinality~$2^{\aleph_0}$.
\end{cor}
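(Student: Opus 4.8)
The plan is to embed the power set lattice of the set of primes, ordered by inclusion, into the interval $[\mathsf{V}(S_c(a_1\dots a_r)), {\bf N}_{r+1}]$ as an order-embedding. Since the power set of a countably infinite set contains both a chain and an antichain of cardinality $2^{\aleph_0}$, and an order-embedding carries chains to chains and antichains to antichains, this will supply the required chain and antichain; and as every ai-semiring variety over the finite signature $\{+,\cdot\}$ is determined by a set of identities drawn from a countable set, there are at most $2^{\aleph_0}$ subvarieties in total, so the interval has cardinality exactly $2^{\aleph_0}$.

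For each prime $p$ put $\mathbb{H}_p:=\KG^r(rp,p)$, which by Example~\ref{eg:kl} is the block hypergraph $\mathbb{H}_{\mathscr{F}_{p,r}}$ of the partition system $\mathscr{F}_{p,r}$ of $p$-element subsets of $\{1,\dots,rp\}$, and let $S_p:=S_{\mathbb{H}_p}$ be the associated block hypergraph semiring. A product of $r+1$ blocks is nonzero only if the blocks are pairwise disjoint, which would require $(r+1)p>rp$ elements; hence every such product collapses to $\infty$, so $S_p$ satisfies $x_1\cdots x_{r+1}\approx y_1\cdots y_{r+1}$ and lies in ${\bf N}_{r+1}$. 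The identical length count applied to $S_c(a_1\dots a_r)\cong S_{\KG^r(r,1)}$ (the block hypergraph semiring of the single $r$-edge $\mathbb{H}_{\mathscr{F}_{1,r}}$) shows it too lies in ${\bf N}_{r+1}$. For a set $X$ of primes I would then set
\[
\mathcal{V}_X:=\mathsf{V}(S_c(a_1\dots a_r))\vee\bigvee_{p\in X}\mathsf{V}(S_p),
\]
which by the two observations just made lies in $[\mathsf{V}(S_c(a_1\dots a_r)), {\bf N}_{r+1}]$ for every $X$.

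The separation is achieved with the identities $\bt_{\mathbb{H}_p}\approx\bt_{\mathbb{H}_p}^2$. An identity holds in a join of varieties exactly when it holds in each generator, and since every hypergraph in sight is $r$-uniform with $r>2$, Lemma~\ref{lem:hyperhomom} converts satisfaction of $\bt_{\mathbb{H}_p}\approx\bt_{\mathbb{H}_p}^2$ in $S_q$, respectively in $S_c(a_1\dots a_r)$, into the nonexistence of a homomorphism $\mathbb{H}_p\to\mathbb{H}_q$, respectively $\mathbb{H}_p\to\KG^r(r,1)$. For distinct primes $p\neq q$ the homomorphism independence of Corollary~\ref{cor:continuumkneser} excludes $\mathbb{H}_p\to\mathbb{H}_q$, while $S_p$ itself fails the identity since the identity map realises $\mathbb{H}_p\to\mathbb{H}_p$. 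Hence, for $p\notin Y$ we get $\mathcal{V}_Y\models\bt_{\mathbb{H}_p}\approx\bt_{\mathbb{H}_p}^2$, whereas $\mathcal{V}_X\not\models\bt_{\mathbb{H}_p}\approx\bt_{\mathbb{H}_p}^2$ whenever $p\in X$; thus $X\not\subseteq Y$ forces $\mathcal{V}_X\not\subseteq\mathcal{V}_Y$. Together with the evident monotonicity $X\subseteq Y\Rightarrow\mathcal{V}_X\subseteq\mathcal{V}_Y$ this gives $\mathcal{V}_X\subseteq\mathcal{V}_Y\iff X\subseteq Y$, so $X\mapsto\mathcal{V}_X$ is the desired order-embedding.

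The one point demanding care, and the main obstacle, is verifying that the \emph{lower} endpoint $\mathsf{V}(S_c(a_1\dots a_r))$ satisfies every separating identity, i.e.\ that there is no homomorphism $\KG^r(rp,p)\to\KG^r(r,1)$ for any prime $p$. This does not follow from Corollary~\ref{cor:continuumkneser} (the single edge is not a member of that family), and must instead be extracted from Lemma~\ref{lem:BBGR}: part~(1) reduces it to the nonexistence of a Kneser-graph homomorphism $\KG^2(rp,p)\to\KG^2(r,1)$, and part~(2), applied with the common ratio $rp/p=r=r/1$ and $rp>2p$, shows such a homomorphism exists only when $p\mid 1$, which fails for every prime. (Equivalently, by Lov\'asz's theorem $\chi(\KG^2(rp,p))=(r-2)p+2>r$, so no proper $r$-colouring, hence no homomorphism to $\KG^2(r,1)\cong K_r$, exists.) This inequality is exactly what pins the whole continuum family above $\mathsf{V}(S_c(a_1\dots a_r))$.
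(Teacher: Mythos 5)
Your proof is correct and follows essentially the same route as the paper: the same family of Kneser hypergraph semirings $S_{\mathscr{F}_{p,r}}$ indexed by primes, separated by the identities $\bt_{\mathbb{H}_p}\approx\bt_{\mathbb{H}_p}^2$ via Lemma~\ref{lem:hyperhomom} and Corollary~\ref{cor:continuumkneser}, with the power set of the primes order-embedded into the interval. The only real difference is that you explicitly join in the lower endpoint and then verify that there is no homomorphism $\KG^r(rp,p)\to\KG^r(r,1)$ --- a check the paper leaves implicit, and one that is also subsumed by the observation that $S_c(a_1\dots a_r)$ embeds into each $S_{\mathscr{F}_{q,r}}$ as the subsemiring generated by $r$ pairwise disjoint blocks, so that each $\mathsf{V}(S_{\mathscr{F}_{q,r}})$ already contains the bottom of the interval.
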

\begin{proof}
Let $P$ denote the set of all prime numbers and for each $p\in P$ let $\mathbb{H}_p$ abbreviate $\mathbb{H}_{\mathscr{F}_{p,r}}$.
Then the semiring $S_{\mathscr{F}_{p,r}}$ is $(r+1)$-nilpotent, so sits within the interval $[\mathsf{V}(S_c(a_1\dots a_r)),{\bf N}_{r+1}]$.  Let $Q$ be any proper  subset of $P$ and let $p\in P\backslash Q$.
By Corollary~\ref{cor:continuumkneser} there is no homomorphism from $\mathbb{H}_p$ to any $\mathbb{H}_q$ for $q\in Q$, so $\{S_{\mathscr{F}_{q,r}}\mid q\in Q\}$ satisfies ${\bf t}_{\mathbb{H}_p}\approx {\bf t}_{\mathbb{H}_p}^2$ by Lemma~\ref{lem:hyperhomom}.
As ${\bf t}_{\mathbb{H}_p}\approx {\bf t}_{\mathbb{H}_p}^2$ fails on the semiring~$S_{\mathscr{F}_{p,r}}$, it follows that $S_{\mathscr{F}_{p,r}}\notin\mathsf{V}(\{S_{\mathscr{F}_{q,r}}\mid q\in Q\})$, and that all $2^{\aleph_0}$ subsets of $P$ lead to different subvarieties of ${\bf N}_{r+1}$.
The observation on width and height comes from well-known properties of the powerset lattice of the denumerable set $P$, which we have order-embedded within $[\mathsf{V}(S_c(a_1\dots a_r)),{\bf N}_{r+1}]$.
\end{proof}

The varieties ${\bf N}_1, {\bf N}_2, {\bf N}_3, \dots$ and
$\mathsf{V}(S_c(a_1)), \mathsf{V}(S_c(a_1a_2)), \mathsf{V}(S_c(a_1a_2a_3)), \dots$
provide two stratifications of ${\bf N}$ (by Corollary~\ref{c3.1} and Lemma~\ref{lem:wordstratification} respectively), but despite the decomposition in Proposition~\ref{pro24101901}, they are quite different.  The variety $\mathsf{V}(S_c(a_1a_2\dots a_{k}))$ is the smallest subvariety of ${\bf N}$  not contained in ${\bf N}_{k}$,
but it does not itself contain ${\bf N}_k$, nor equal ${\bf N}_{k+1}$ (except in the base cases of $k=1,2$ where they coincide).
We now show that no finite member of ${\bf N}$ generates a variety containing ${\bf N}_k$ for $k \geq 4$.
We first need some results on colouring.

All block hypergraphs are $2$-colourable, by Lemma~\ref{lem:transversal}, but homomorphisms relate to strong colourings more than to colourings.  The next proposition uses the inability to strongly $\ell$-colour $\mathbb{H}_{\mathscr{F}_{k,\ell}}$ (the $k$-subsets of $\{1,\dots,k\ell\}$, in the notation of Example~\ref{eg:kl}).
\begin{lem}\label{lem:kneser}
If there is a strong $n$-colouring of $\mathbb{H}_{\mathscr{F}_{k,\ell}}$ then $n\geq k\ell-2k+2$.
\end{lem}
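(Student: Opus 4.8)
The plan is to reduce strong colourings of $\mathbb{H}_{\mathscr{F}_{k,\ell}}$ to ordinary proper colourings of the Kneser \emph{graph} $\KG^2(k\ell,k)$, and then to invoke Lov\'asz's theorem. Recall that a strong $n$-colouring assigns $n$ colours to the vertices so that the vertices lying in any single hyperedge all receive distinct colours. By Example~\ref{eg:kl}, $\mathbb{H}_{\mathscr{F}_{k,\ell}}$ is the Kneser hypergraph $\KG^\ell(k\ell,k)$, whose vertices are the $k$-subsets of $\{1,\dots,k\ell\}$ and whose hyperedges are the partitions of $\{1,\dots,k\ell\}$ into $\ell$ pairwise disjoint $k$-subsets.

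First I would identify the conflict graph implicit in the strong-colouring condition. Two distinct $k$-subsets $A$ and $B$ are forced to receive different colours exactly when they occur together inside some hyperedge. Since the blocks of a partition are pairwise disjoint, such a hyperedge can exist only if $A\cap B=\varnothing$; conversely, if $A\cap B=\varnothing$ then $|A\cup B|=2k$ and the remaining $k\ell-2k=k(\ell-2)$ elements can be split into $k$-subsets, producing a partition that has both $A$ and $B$ as blocks. (Here I use $\ell\geq 2$, which is the only nontrivial case; for $\ell=1$ the hypergraph is a single vertex and the claimed bound $k\ell-2k+2=2-k\leq 1$ holds trivially.) Thus the constrained pairs are exactly the disjoint pairs, i.e.\ the edges of $\KG^2(k\ell,k)$, and so the strong $n$-colourings of $\mathbb{H}_{\mathscr{F}_{k,\ell}}$ coincide precisely with the proper $n$-colourings of the Kneser graph $\KG^2(k\ell,k)$. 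In particular the least number of colours admitting a strong colouring equals $\chi(\KG^2(k\ell,k))$.

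It then remains to apply Lov\'asz's resolution of Kneser's conjecture, recalled in Section~\ref{sec:intro}: for $n\geq 2k$ one has $\chi(\KG^2(n,k))=n-2k+2$. Taking $n=k\ell$ (with $\ell\geq 2$, so that $n\geq 2k$) yields $\chi(\KG^2(k\ell,k))=k\ell-2k+2$. Consequently any strong $n$-colouring of $\mathbb{H}_{\mathscr{F}_{k,\ell}}$ must use at least $k\ell-2k+2$ colours, i.e.\ $n\geq k\ell-2k+2$, as required.

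Since the only substantial input is Lov\'asz's theorem, which is cited rather than reproved, I do not anticipate a genuine obstacle. The one point needing care is the verification that the strong-colouring constraints of the $\ell$-uniform hypergraph reduce \emph{exactly} to adjacency in the $2$-uniform Kneser graph --- that is, that pairwise disjointness within a hyperedge is the only constraint and that every disjoint pair does arise inside some hyperedge --- together with dispatching the degenerate small values of $k$ and $\ell$.
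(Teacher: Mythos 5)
Your argument is correct and is essentially the same as the paper's: both reduce a strong colouring of $\mathbb{H}_{\mathscr{F}_{k,\ell}}$ to a proper colouring of the Kneser graph $\KG^2(k\ell,k)$ via the observation that any two disjoint $k$-sets extend to a hyperedge, and then invoke Lov\'asz's theorem. Your write-up is slightly more detailed in checking that disjointness is exactly the constraint and in dispatching the degenerate case $\ell=1$, but the substance is identical.
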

\begin{proof}
Assume that $n\geq \ell$ is such that there is a strong $n$-colouring of $\mathbb{H}_{\mathscr{F}_{k,\ell}}$ (noting that $n<\ell$ is impossible).
It follows that no two disjoint $k$-sets in $\{1,\dots,k\ell\}$ are given the same colour, because every such pair extends to a hyperedge in $\mathbb{H}_{\mathscr{F}_{k,\ell}}$.
Thus we have an $n$-colouring of the Kneser graph $\KG^2(k\ell, k)$.
Lov\'asz~\cite{lov} proved that the chromatic number of $\KG^2(k\ell, k)$ is $k\ell-2k+2$, so it follows that $n\geq k\ell-2k+2$.
\end{proof}
\begin{pro}\label{pro:extra}
For any $n,\ell>2$ there exists $k$ such that $S_{\mathscr{F}_{k,\ell}}$ is an $(\ell+1)$-nilpotent hypergraph semiring that is not contained in the variety of any $n$-element member of ${\bf N}$.
In particular, for $\ell>2$, there is no finite member of ${\bf N}$ that generates a variety containing~${\bf N}_{\ell+1}$.
\end{pro}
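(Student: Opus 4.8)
The plan is to separate $S_{\mathscr{F}_{k,\ell}}$ from $\mathsf{V}(S)$ for every $n$-element $S\in{\bf N}$ by a single identity of the shape $\bt_{\mathbb{G}}\approx\bt_{\mathbb{G}}^2$, where $\mathbb{G}:=\mathbb{H}_{\mathscr{F}_{k,\ell}}$ is the $\ell$-uniform Kneser hypergraph $\KG^\ell(k\ell,k)$. The engine forcing this identity to hold in $S$, once $k$ is large, is Lov\'asz's chromatic bound as packaged in Lemma~\ref{lem:kneser}. I would first record two elementary facts about a finite member $S$ of ${\bf N}$. It is commutative by~\eqref{id2}; and it satisfies $x^2\approx\infty$, since $x^3\approx x^2$ by~\eqref{id1} while the multiplicative reduct is nilpotent by Proposition~\ref{p3}, so $x^2=x^3=\cdots$ is eventually the multiplicative zero $\infty$. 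In particular $\infty$ is the only multiplicative idempotent of $S$.

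Given $n,\ell>2$, I would choose $k$ with $k(\ell-2)+2>n$, i.e.\ $k>(n-2)/(\ell-2)$, so that $k\ell-2k+2>n$. The semiring $S_{\mathscr{F}_{k,\ell}}$ is $(\ell+1)$-nilpotent, because a $k\ell$-element set admits no $\ell+1$ pairwise disjoint $k$-subsets, so every length-$(\ell+1)$ product collapses to $\infty$; hence $S_{\mathscr{F}_{k,\ell}}\in{\bf N}_{\ell+1}$. Moreover $\bt_{\mathbb{G}}\approx\bt_{\mathbb{G}}^2$ fails on $S_{\mathscr{F}_{k,\ell}}$: under the assignment $x_A\mapsto A$ every hyperedge product equals the full set $I$, so $\bt_{\mathbb{G}}$ evaluates to $I\neq\infty$ whereas $\bt_{\mathbb{G}}^2$ evaluates to $I\cdot I=\infty$ (equivalently, this is Lemma~\ref{lem:hyperhomom} applied to the identity homomorphism $\mathbb{G}\to\mathbb{G}$).

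The crux is to prove $S\models\bt_{\mathbb{G}}\approx\bt_{\mathbb{G}}^2$. Since $\bt_{\mathbb{G}}^2=\bt_{\mathbb{G}}\cdot\bt_{\mathbb{G}}$ and $\infty$ is the only multiplicative idempotent, it suffices to show $\mu(\bt_{\mathbb{G}})=\infty$ for every assignment $\mu$ of $\{x_A\mid A\in\mathscr{F}_{k,\ell}\}$ into $S$. Suppose instead $\mu(\bt_{\mathbb{G}})\neq\infty$. In the flat semilattice $(S,+)$ a sum avoids $\infty$ only when all its summands agree at one non-$\infty$ value, so in particular each hyperedge product $\mu(x_{A_1})\cdots\mu(x_{A_\ell})$ is distinct from $\infty$. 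By commutativity and $x^2\approx\infty$, a repeated factor in such a product would force the value $\infty$; hence the values $\mu(x_{A_1}),\dots,\mu(x_{A_\ell})$ are pairwise distinct for each hyperedge $\{A_1,\dots,A_\ell\}$. Thus $A\mapsto\mu(x_A)$ is a strong $n$-colouring of $\mathbb{G}$, and Lemma~\ref{lem:kneser} yields $n\geq k\ell-2k+2$, contradicting the choice of $k$. So $\mu(\bt_{\mathbb{G}})=\infty$ always and the identity holds in $S$. I expect this reduction, recognising that the relevant notion is a \emph{strong} colouring (not an ordinary $2$-colouring or hypergraph homomorphism) and that it is unlocked precisely by the identity $x^2\approx\infty$, to be the main point; once it is in place the conclusion is immediate from Lov\'asz's theorem.

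Because $S_{\mathscr{F}_{k,\ell}}$ would satisfy every identity of $S$ if it lay in $\mathsf{V}(S)$, the failure just exhibited gives $S_{\mathscr{F}_{k,\ell}}\notin\mathsf{V}(S)$, proving the first assertion. For the ``in particular'' clause, if some finite $T\in{\bf N}$ generated a variety containing ${\bf N}_{\ell+1}$, then with $n=|T|$ and the associated $k$ we would have $S_{\mathscr{F}_{k,\ell}}\in{\bf N}_{\ell+1}\subseteq\mathsf{V}(T)$ yet $S_{\mathscr{F}_{k,\ell}}\notin\mathsf{V}(T)$, a contradiction. Everything outside the colouring reduction is routine bookkeeping with the flat-semilattice addition and the nilpotency supplied by Proposition~\ref{p3}.
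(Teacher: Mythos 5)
Your proposal is correct and follows essentially the same route as the paper: choose $k$ with $n<k\ell-2k+2$, use Lemma~\ref{lem:kneser} (via Lov\'asz) to deny a strong $n$-colouring, and use $x^2\approx\infty$ to force a hyperedge summand, hence both sides of $\bt_{\mathbb{G}}\approx\bt_{\mathbb{G}}^2$, to $\infty$ in $S$ while the identity fails on $S_{\mathscr{F}_{k,\ell}}$. The only quibble is your aside that a non-$\infty$ sum forces all summands to agree at one value --- that holds in flat semirings but not in an arbitrary member of ${\bf N}$; fortunately you only use the correct weaker consequence that every summand must be non-$\infty$.
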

\begin{proof}
Let $S$ be an $n$-element member of ${\bf N}$, and observe that $S$ satisfies $x^2\approx \infty$.
Let $\mathbb{H}$ abbreviate $\mathbb{H}_{\mathscr{F}_{k,\ell}}$ and
consider the identity $\bt_\mathbb{H}\approx \bt_\mathbb{H}^2$, which fails on $S_{\mathscr{F}_{k,\ell}}$.
We shall show that it holds in $S$.
Let $V$ be the vertices of $\mathbb{H}$ and consider an assignment~$\theta$ of the variables $\{x_v \mid v\in V\}$ into $S$.
Choose $k$ such that $n<k\ell-2k+2$, which is possible because $\ell>2$.
By Lemma~\ref{lem:kneser}, there is no strong $n$-colouring of $\mathbb{H}_{\mathscr{F}_{k,\ell}}$, so there is a hyperedge $\{v_1,\dots,v_\ell\}$ of $\mathbb{H}_{\mathscr{F}_{k,\ell}}$ such that $\theta(x_{v_1}),\dots,\theta(x_{v_\ell})$ contains a repeat.
Hence by $x^2\approx \infty$, the value of $\theta(x_{v_1}\dots x_{v_\ell})$ is $\infty$.
Hence both sides of $\bt_\mathbb{H}\approx \bt_\mathbb{H}^2$ take the value $\infty$, showing that the identity is satisfied.
It follows that~$S_{\mathscr{F}_{k,\ell}}$ is not in the variety generated by $S$.
\end{proof}

From \cite{sr} and \cite[Proposition 3.2]{rjzl} we know that
${\bf N}_1=\mathbf{T}$, ${\bf N}_2=\mathsf{V}(S_c(a_1))$ and ${\bf N}_3=\mathsf{V}(S_c(a_1a_2))$.
By Proposition \ref{pro:extra} we immediately deduce the following result.

\begin{cor}
Let $k\geq 1$ be a natural number.
Then ${\bf N}_k$ is finitely generated if and only if $k\leq 3$.
\end{cor}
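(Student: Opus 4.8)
The plan is to reduce the equivalence to the two explicitly quoted facts: the base-case identifications $\mathbf{N}_1=\mathbf{T}$, $\mathbf{N}_2=\mathsf{V}(S_c(a_1))$, $\mathbf{N}_3=\mathsf{V}(S_c(a_1a_2))$ (from \cite{sr} and \cite[Proposition 3.2]{rjzl}), and Proposition~\ref{pro:extra}. For the forward direction (if $\mathbf{N}_k$ is finitely generated then $k\le 3$) I would argue the contrapositive: suppose $k\ge 4$. Writing $k=\ell+1$ with $\ell=k-1\ge 3$, Proposition~\ref{pro:extra} (applied with this $\ell$, and with $n$ ranging over all positive integers) guarantees that for \emph{every} $n$ there is a finite hypergraph semiring $S_{\mathscr{F}_{k',\ell}}$ lying in $\mathbf{N}_{\ell+1}=\mathbf{N}_k$ that is not contained in the variety generated by any $n$-element member of $\mathbf{N}$. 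Hence no single finite semiring can generate all of $\mathbf{N}_k$, so $\mathbf{N}_k$ is not finitely generated.

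For the reverse direction (if $k\le 3$ then $\mathbf{N}_k$ is finitely generated) I would simply invoke the base cases: $\mathbf{N}_1$ is the trivial variety $\mathbf{T}$ (generated by the one-element algebra), $\mathbf{N}_2=\mathsf{V}(S_c(a_1))$ is generated by the finite semiring $S_c(a_1)$, and $\mathbf{N}_3=\mathsf{V}(S_c(a_1a_2))$ is generated by the finite semiring $S_c(a_1a_2)$. Each is the variety generated by a single finite algebra, so each is finitely generated.

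The only subtlety — and the main thing to get right — is the correct bookkeeping of indices, since Proposition~\ref{pro:extra} is phrased in terms of $\ell$ and produces obstructions in $\mathbf{N}_{\ell+1}$, whereas the corollary is phrased in terms of $k$. The hypothesis $\ell>2$ of the proposition translates exactly to $\ell+1>3$, i.e.\ to the claim concerning $\mathbf{N}_k$ for $k\ge 4$, so the ranges match cleanly and there is no gap between the base cases ($k\le 3$) and the obstruction range ($k\ge 4$). No genuinely hard step remains, as both directions are immediate consequences of results already proved; the corollary is essentially a repackaging of Proposition~\ref{pro:extra} together with the quoted identifications of the three smallest $\mathbf{N}_k$.
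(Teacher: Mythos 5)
Your proposal is correct and follows exactly the paper's route: the reverse direction comes from the identifications ${\bf N}_1=\mathbf{T}$, ${\bf N}_2=\mathsf{V}(S_c(a_1))$, ${\bf N}_3=\mathsf{V}(S_c(a_1a_2))$, and the forward direction is the ``in particular'' clause of Proposition~\ref{pro:extra} with $\ell=k-1>2$, noting that any finite generator of ${\bf N}_k$ would be a finite member of ${\bf N}$. Your index bookkeeping matches the paper's, so there is nothing further to add.
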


\section{Conclusion}
We have completely answered the finite basis problem for ai-semirings that lie in the variety of $S_7$.
By contrast, we still do not know whether every finite ai-semiring whose variety contains $S_7$ is nonfinitely based.
Figure \ref{figure1} presents a basic framework of the subvariety lattice of $\mathsf{V}(S_7)$
and describes its lower parts.
The subvariety lattice of ${\bf N}$ is the disjoint union of $\{{\bf N}, {\bf T}\}$ and
the intervals of the form $[\mathsf{V}(S_c(a_1\dots a_k)),{\bf N}_{k+1}]$ for all $k\geq1$ (Proposition~\ref{pro24101901}), and we have shown that for $k=1,2$ these intervals are trivial (Corollary~\ref{co241019001}), while for each $k>2$ the interval has cardinality of the continuum, thereby showing that $S_7$ has type~$2^{\aleph_0}$.
This proves the conjecture proposed in \cite{rjzl}: ``it remains
plausible that $M(a)$ generates a semiring variety with continuum many subvarieties''.

We have shown that $({\bf N}_{k})_{k \geq 1}$ and $(S_c(a_1\dots a_k))_{k \geq 1}$
provide two different stratifications of ${\bf N}$.
Since
${\bf N}_1=\mathbf{T}$, ${\bf N}_2=\mathsf{V}(S_c(a_1))$ and ${\bf N}_3=\mathsf{V}(S_c(a_1a_2))$,
it follows from \cite[Proposition 3.4]{rjzl} that all of the intervals $[{\bf N}_1, {\bf N}_2]$ and $[{\bf N}_2, {\bf N}_3]$,
$[\mathsf{V}(S_c(a_1)), \mathsf{V}(S_c(a_1a_2))]$ and $[\mathsf{V}(S_c(a_1a_2)), \mathsf{V}(S_c(a_1a_2a_3))]$ contain $2$ varieties.
But the interval $[{\bf N}_3, {\bf N}_4]$ has cardinality of the continuum.
Indeed, let $\mathcal{V}$ be a variety in $[{\bf N}_3, {\bf N}_4]$ that do not contain $S_c(a_1a_2a_3)$.
Then by Corollary \ref{cor241018001} we have that $\mathcal{V}={\bf N}_3$.
Since ${\bf N}_3=\mathsf{V}(S_c(a_1a_2))$, it follows immediately that
$\mathsf{V}(S_c(a_1a_2a_3))$ lies in $[{\bf N}_3, {\bf N}_4]$ and so
$[{\bf N}_3, {\bf N}_4]$ is the linear sum of the singleton lattice $\{{\bf N}_3\}$ and the interval $[\mathsf{V}(S_c(a_1a_2a_3)), {\bf N}_4]$.
By Corollary \ref{cor:continuum} we obtain that $[{\bf N}_3, {\bf N}_4]$ is uncountable.
The intervals $[{\bf N}_{k+1}, {\bf N}_{k+2}]$ for $k\geq 3$ are at least slightly different, because of Proposition~\ref{pro:extra}, however by adapting the proof of Corollary \ref{cor:continuum} we can also show that they have cardinality $2^{\aleph_0}$.
To see this, observe that if we fix $r = k+1$, then the laws ${\bf t}_{\mathbb{H}_p}\approx {\bf t}_{\mathbb{H}_p}^2$ (for prime $p$) used in the proof of Corollary~\ref{cor:continuum} are satisfied by ${\bf N}_{k+1}$, so hold in the join
${\bf N}_{k+1}\vee\mathsf{V}(\{S_{\mathscr{F}_{q,r}}\mid q\in Q\})$ (a subvariety of ${\bf N}_{k+2}$) but not in $S_{\mathscr{F}_{p,r}}$.
However, we do not know the cardinality of the intervals
$[\mathsf{V}(S_c(a_1\dots a_k)), \mathsf{V}(S_c(a_1\dots a_{k+1}))]$ for $k \geq 3$.
It is quite possible that all of these intervals also contain $2^{\aleph_0}$ varieties.


\bibliographystyle{amsplain}

\end{document}